\documentclass[leqno,11pt,a4paper,twoside]{article}%
\usepackage{amssymb}
\usepackage{amsfonts}
\usepackage{amsmath}
\usepackage{authblk}
\usepackage{cancel}
\usepackage{comment}
\usepackage{graphicx}
\usepackage{xcolor}
\usepackage{enumitem}
\usepackage[normalem]{ulem}
\usepackage{lipsum,multicol}
\usepackage[english]{babel}
\setcounter{MaxMatrixCols}{30}
%TCIDATA{OutputFilter=latex2.dll}
%TCIDATA{Version=5.50.0.2960}
%TCIDATA{CSTFile=40 LaTeX article.cst}
%TCIDATA{LastRevised=Saturday, August 04, 2012 10:36:28}
%TCIDATA{<META NAME="GraphicsSave" CONTENT="32">}
%TCIDATA{<META NAME="SaveForMode" CONTENT="1">}
%TCIDATA{BibliographyScheme=BibTeX}
%TCIDATA{Language=American English}
%BeginMSIPreambleData
\setlist[itemize]{noitemsep, topsep=0pt}
\setlist[enumerate]{noitemsep, topsep=0pt}
\setlist[itemize]{leftmargin=*}
\setlist[enumerate]{leftmargin=*}
\providecommand{\U}[1]{\protect\rule{.1in}{.1in}}
%EndMSIPreambleData
%%%%%%% colors %
\makeatletter

\newcommand{\Rmnum}[1]{\expandafter\@slowromancap\romannumeral #1@}
\makeatother

\providecommand{\norm}[1]{\left\lVert#1\right\rVert}%//.//
\providecommand{\abs}[1]{\left\lvert#1\right\rvert}
\providecommand{\pr}[1]{\left(#1\right)} %(.)
\providecommand{\pp}[1]{\left[#1\right]} %[.]
\providecommand{\set}[1]{\left\lbrace#1\right\rbrace} %{.}
\providecommand{\scal}[1]{\left\langle#1\right\rangle}%<.>

%------------------------------------

%%%%%%%
\oddsidemargin 1pt \evensidemargin 1pt \textheight 690pt \textwidth 470pt
\topmargin 1pt \headheight 1pt
\newtheorem{theorem}{Theorem}[section]

\newtheorem{assumption}{Assumption}[section]

\newtheorem{corollary}{Corollary}[section]

\newtheorem{definition}{Definition}[section]

\newtheorem{lemma}{Lemma}[section]

\newtheorem{proposition}{Proposition}[section]
\newtheorem{remark}{Remark}[section]
\newtheorem{example}{Example}[section]

\newenvironment{proof}[1][Proof]{\noindent\textbf{#1.} }{\ \rule{0.5em}{0.5em}}

\usepackage{hyperref}
\usepackage{cite}

\begin{document}

\title{Linearisation Techniques and the Dual Algorithm for a Class of Mixed Singular/Continuous Control Problems in Reinsurance. \\ Part \Rmnum{1}: Theoretical Aspects\footnote{The work has been supported by National Key R and D Program of China (NO. 2018YFA0703900), the NSF of P.R.China (NOs. 12031009, 11871037), NSFC-RS (No. 11661130148; NA150344), 111 Project (No. B12023).}}

\author[1,2]{Dan GOREAC}
 \author[1,3]{Juan LI}
 \author[1]{Boxiang XU}
\affil[1]{School of Mathematics and Statistics, Shandong University, Weihai, Weihai 264209, P.R. China}
\affil[2]{LAMA, Univ Gustave Eiffel, UPEM, Univ Paris Est Creteil, CNRS, F-77447 Marne-la-Vall\'{e}e, France}
\affil[3]{Research Center for Mathematics and Interdisciplinary Sciences, Shandong University, Qingdao 266237, P. R. China.}
\affil[ ]{{\it E-mails: dan.goreac@univ-eiffel.fr,\,\ juanli@sdu.edu.cn,\,\ boxiangxu@163.com}}
\date{}
\maketitle

\noindent \textbf{Abstract}.
This paper focuses on linearisation techniques for a class of mixed singular/continuous control problems and ensuing algorithms. The motivation comes from (re)insurance problems with reserve-dependent premiums with Cram\'{e}r-Lundberg claims, by allowing singular dividend payments and capital injections.  Using variational techniques and embedding the trajectories in an appropriate family of occupation measures, we provide the linearisation of such problems in which the continuous control is given by reinsurance policies and the singular one by dividends and capital injections.  The linearisation translates into a dual dynamic programming (DDP) algorithm. An important part of the paper is dedicated to structural considerations allowing reasonable implementation.  We also hint connections to methods relying on moment sum of squares and LMI (linear matrix inequality)-relaxations to approximate the optimal candidates.
\bigskip

\noindent \textbf{Keywords}. Singular/continuous control; linear programming; occupation measure; dual dynamic programming; dual algorithm.

\bigskip

\noindent \textbf{MSC2020 Classification}.  65K10; 65K15; 93E20; 65C30; 49M20; 49M29; 49J40; 91G50; 93-08.

\section{Preliminaries}
\subsection{Introduction}

\quad\ \ This paper focuses on an insurance model with Cram\'{e}r-Lundberg claims ($C$ arriving along a Poisson process $N$) and mixed types of control policies $\pi$. These policies include continuous retention levels $u$ affecting the reserve-dependent premium $p$ and can be seen as a reinsurance mechanism in which the primary company only pays the fraction $u(C)$ of one claim.  The policies also include singular controls coming from classical dividend payments ($L$) and capital injections ($I$).
\begin{equation*}
X_t^\pi:=x+\int_0^t p^{u_s}\pr{X_s^\pi}ds-\sum_{i=1}^{N_t}u_{\tau_i}\pr{C_i}-L_t+I_t.
\end{equation*}
The capital injected has to be reimbursed at a unitary cost $k>1$. The aim is to maximize a $q>0$-discounted value of dividends from which the reimbursement is subtracted.
\begin{equation*}
\textnormal{Maximize over }\pi,\
\mathbb{E}_x\pp{\int_0^{\sigma_{0-}^{\pi}}e^{-qs}\pr{dL_s-kdI_s}}.
\end{equation*}The precise restrictions on the policies $\pi$ are made clear below.
The literature  on insurance with various models is very rich and a complete overview exceeds the aim of this paper. \\
The first result in connection with dividends optimization is due to de Finetti \cite{deFinetti} for Brownian approximations of the claims in which injections of capital are not possible and the bankruptcy is declared as soon as the reserve becomes negative.  Within the same Brownian setting,  \cite{shreve1984optimal} considers a problem in which the reserve is systematically reflected to the non-negative real values and the injections (deposits) are withdrawn from the efficiency criterion.  The paper \cite{lokka2008optimal} is the first to compare the two values and to specify a critical cost of injection of capital discriminating between the two strategies. \\
In the jump-diffusion case, a complete characterization of optimal dividends has been provided in \cite{avram2007optimal} for a class of spectrally negative Lévy claims in absence of injections of capital. Extensions to systematic injections (similar to \cite{shreve1984optimal}) have made the object of \cite{kulenko2008optimal}, \cite{eisenberg2011minimising}, \cite{noba2020bailout} and so on. The paper\cite{avram2019lokka} focuses on a control problem in connection to an insurance company that pays out dividends and is allowed to inject capital and shows that the a L{\o}kka-Zervos-type alternative is also valid for a Cram\'{e}r-Lundberg risk process in which the claims are exponentially distributed.  The paper \cite{avram2020equity} deals with similar problems but without asking a systematic reflection (as it was the case in the previously cited papers), but optimizing the level below which such injections are futile and declaring bankruptcy is preferable. The same kind of results have been obtained in \cite{gajek2017complete} for more general claims but using different methods based on the scale functions (see \cite{asmussen2010ruin}, \cite{kyprianou2014fluctuations},  \cite{Liu_2014}etc.).  These papers usually deal with a fixed premium $p$ and they do not take into account the question of reinsurance. The optimal strategy in \cite{avram2020equity} (and \cite{gajek2017complete}) is shown to be of $a,b$-type (reflect if above $-a$, pay dividends if above $b$), with $a$ and $b$ depending on $k$.
\\
Finally, for reinsurance problems (but without injections), the reader is referred to the book \cite{azcue2014stochastic} and the seminal paper \cite{azcue2005optimal} to get acquainted with the Hamilton-Jacobi approach. For excess-of-loss reinsurance, also see \cite{Albrecher_2011}.\\

We consider here a model that extends the one in \cite{avram2020equity} by adding (to the premium as it is already the case in \cite{azcue2014stochastic}) a continuous control parameter specifying the level of retention in connection to reinsurance.  Verification results for such models are a lot more trickier.
The main methods we wish to suggest in this article in order to solve mixed singular/continuous  optimal control problems are based on linear programming techniques and dual dynamic programming algorithms. Over the years,  similar methods have been employed for control problems (but lacking the mixed features). Using viscosity arguments, it is shown that the initial control problem is equivalent to a linear optimization problem formulated on a space of measures ( see \cite{buckdahn2011stochastic}, \cite{goreac2011mayer} and references therein for more details for diffusions). The same approach is adapted to continuous control of jump diffusions, e.g.  \cite{Serrano2015OnTL}. For example, the paper \cite{taksar1997infinite} provides an equivalent infinite-dimensional linear programming and a dual formulation for multidimensional singular stochastic control.\\

As for the dual algorithm, Pereira et al.  are the first to have presented a methodology for the solution of multi-stage optimization problems with random features called stochastic dual dynamic programming (SDDP) in  \cite{pereira1991multi}. Several scholars devoted themselves to the improvement of DDP. For example, \cite{shapiro2011analysis} presents statistical properties as well as the convergence of SDDP methods applied to multi-stage linear programming problems. Lasserre et al.  consider  nonlinear optimal control problems treated through occupation measures with polynomial data and provides a method named LMI (linear matrix inequality)-relaxations in \cite{lasserre2008nonlinear} and \cite{lasserre2005nonlinear}. Recently, \cite{hohmann2020moment} present a finite-horizon optimization algorithm based on DDP by using sum-of-squares techniques and LMI-relaxations to solve the problems described by occupation measures with polynomial functions. These references mostly concern deterministic dynamics, continuous controls and they do not present stopping times. In connection with exit problems with stochastic dynamics, we mention \cite{henrion2021moment}, but as far as we see, their problem has no mixed features for control, nor does it present representations for stopping times. \\

\textbf{Main contributions and positioning.} The main difference with respect to our work is that the aforementioned papers deal with a specific type of control (either continuous or singular) and, foremost, as we have already mentioned, they do not take into account linear formulations for the stopping times. These features are essential to our problem and need to be dealt with.
\begin{enumerate}
\item Rather than relying on the classical duality for infinite-dimensional programs (as it is the case in \cite{taksar1997infinite}), we adopt a variational point of view inspired by \cite{buckdahn2011stochastic}.  We prove, in Proposition \ref{Vsupersol} that the value function of our control problem \eqref{Value} satisfies a Hamilton-Jacobi variational inequality \eqref{HJB}. This super-solution can be approximated by regular functions (cf. Proposition \ref{PropSupersol}).
\item We linearise the mixed singular/continuous optimal control problem by defining occupation measures satisfying convenient linear and total-variation restrictions in \eqref{StrProperties}. To our best knowledge, this kind of formulations are completely new.
\item We provide two kinds of dual formulations for the control problem in Theorem \ref{ThmDual}. This translates into linearised forms of the dynamic programming principle (cf.  \eqref{TwoStage}), again new as far as we know.
\item Since the Hamiltonian is written differently on the negative axis\footnote{the reserve can become negative without inducing bankruptcy because of capital injections}, the set of constraints, and the dual formulation have to be extended in a compatible way (see Proposition \ref{PropNegx}). Again, this is specific to our framework and we have no knowledge of similar results.
\item Finally, we present the two-stage dual algorithm inspired by the cited references. However, a rather important part of the paper is spent explaining
\begin{itemize}
\item how to deal with the non-compactness of the features (e.g. possibly unbounded injections);
\item how to ensure polynomial features for the infinitesimal generator (such that the considerations in \cite{putinar1993positive} apply);
\item how to generate scenarios in the forward step.
\end{itemize}
Again, to our best knowledge, such features are new.
\end{enumerate}
Let us just point out that the numerical illustrations of the algorithm make the object of a second part in order to keep the paper at a reasonable length. They include comparisons with the $a,b$-policies benchmark in \cite{avram2020equity} and \cite{AGAS2022}.\\

\textbf{This paper is organized as follows. } In the remaining of the section, we give some standard notations used throughout the paper. Section \ref{Section2} is devoted to the description of the problem. In particular, the surplus process is introduced in \eqref{Surpluspi} and the restrictions on the dividends are specified in \eqref{RestrDividends}. Having defined the value function of our control problem in \eqref{Value}, we provide the upper estimates for the controlled surplus trajectory and the regularity of the value function in Proposition \ref{PropBasicPropertiesXV}. Further estimates used to define variation constraints for the occupation measures are exhibited in Proposition \ref{PropEstimILX}. Section \ref{Section3} is concerned with the variational characterization of the value function. This is done in Proposition \ref{Vsupersol} and completed with approximation results in Proposition \ref{PropSupersol}. The linearization method makes the object of Section \ref{Section4}. We describe the occupation measures and their constraints in \eqref{StrProperties} and Definition \ref{Theta}. The duality and no-gap results make the object of Theorem \ref{ThmDual}. Section \ref{Section5} provides further insight into the linearized dynamic programming principles in Subsection \ref{Sub5.1} and the extensions to the negative axis in Subsection \ref{Sub5.2}. These considerations are followed by a detailed description of the algorithm. Besides the presentation of the forward and backward step, we make a thorough analysis of the compact-reduction of the state and control spaces, and of the polynomial compatibility of the generator, see Example 5.1. Hints to the consideration of a bankruptcy penalty are provided in Section \ref{Section6}. Finally, the Appendix gathers all the proofs for the theoretical results.

\subsection{Notations}
We use the following standard notations:\begin{itemize}
\item $\mathbb{R}$ stands for the real axis, $\mathbb{R}_+$ denotes the non-negative real semi-axis and $\mathbb{R}_-$ denotes the non-positive real semi-axis;
\item $\min$ (resp., $\wedge$) denotes the minimum between several (resp., two) real quantities; similarly, $\max$ (resp., $\vee$) denotes the maximum. For a real quantity $x$, we set $x^+:=x\vee 0$.
\item Bounded variation functions are often identified with the associated measures and the Stieltjes integral is indifferently written with respect to a function or the associated measure.
\item Given a metric space $E$,
\begin{itemize}
\item $\mathcal{B}(E)$ denotes the family of Borel subsets of $E$;
\item $\mathcal{P}(E)$ denotes the family of probability measures on $E$;
\item $\mathcal{M}^+(E)$ stands for the family of positive Borel measures on $E$.
\item For a positive real constant $a>0$, $\mathcal{M}^+_a(E)$ stands for the the family of measures in $\mathcal{M}^+(E)$ whose total mass does not exceed $a$.
\item The space $C^1=C^1\pr{\mathbb{R}}$ stands for differentiable real functions with continuous derivative; the space $C^1\pr{\mathbb{R};\mathbb{R}_+}$ stands for non-negative functions in $C^1$; the space $C^1_{lin}$ (resp. $C^1_{lin}\pr{\mathbb{R};\mathbb{R}_+}$) stands for the respective subsets of functions with (at most) linear growth.
\item We let $\mathbb{R}_r[x]$ stand for polynomials of degree not exceeding $r\in\mathbb{N}^*$ in the Euclidean variable $x\in\mathbb{R}^N$,  and we designate by $\text{deg}(p)$ the degree of such $p\in\mathbb{R}_r [x]$.
\end{itemize}
\end{itemize}

\section{Description of the Problem}\label{Section2}
The dynamics include the following standard features:
\begin{enumerate}
\item A reinsurance policy is a $\mathcal{B}\pr{\mathbb{R}_+}$-measurable function $u:\mathbb{R}_+\rightarrow\mathbb{R}_+$ such that $u(x)\leq x,\forall x\in\mathbb{R}_+$, modelling the part of the claim retained by the initial insurance company. The class of such policies is generally denoted by $\mathcal{R}$; see hereafter for different classes to which our study applies;
\item Given a reinsurance policy $u\in\mathcal{R}$ and non-negative claims $C_i$, $i\geq 1$, where $\pr{C_i}_{i\geq1}$ are independently and identically distributed (i.i.d.) with distribution function $F$, one computes the reinsurance-modified distribution
\[F^u(x):=\int_{\mathbb{R}_+}\mathbf{1}_{u(y)\leq x}F(dy)\] i.e. the distribution function of $u \pr{C_1}$;
\item Given a reinsurance policy $u$ and a reserve level and claim-dependent premium function $p:\mathbb{R}_+\times\mathbb{R}_+\rightarrow\mathbb{R}_+$, we define the premium
\[p^u(x):=\int_{\mathbb{R}_+}p\pr{y,x}dF^u(y)=\int_{\mathbb{R}_+}p\pr{u(y),x}F(dy).\]We note that, in the multiplicative case $p(y,x)=p(x)y$ and full retention $u^0(y)=y$, one gets $p^{u^0}(x)=p(x)\mathbb{E}\pp{C_1}=:\tilde{p}(x)$ (where $\mathbb{E}\pp{C_1}$ denotes the average of a single claim which is assumed to be finite).
\end{enumerate}
\subsection{Basic Assumptions}
\begin{assumption}
Throughout the paper, the following are assumed to hold true. \\
%\begin{enumerate}
\textbf{[A1$_p$]}: The function $p:\mathbb{R}_+^2\rightarrow\mathbb{R}_+$ is non-decreasing with respect to both arguments. \\
\textbf{[A2$_p$]}: The function $p$ is uniformly continuous and
\begin{equation*}
\begin{split}
&\norm{p}_0 :=\int_0^\infty p(y,0)F(dy)<\infty,\\
& \pp{p}_1 :=\sup_{\substack{x,x'\in\mathbb{R}_+,  \\  x\neq x' }} \frac{ \displaystyle{ \int_0^\infty } \sup_{0\leq y\leq z} \abs{p(y,x)-p\pr{y,x'}}dF(z)}{\abs{x-x'}}<\infty.
\end{split}
\end{equation*}
%\end{enumerate}
\end{assumption}
\begin{remark}
\begin{enumerate}
\item As a consequence of \textbf{[A1$_p$]}, the average satisfies \[p^u(x)\leq \bar{p}(x):=\int_{\mathbb{R}^+} p(y,x)dF(y),\ \forall u\in\mathcal{R},\ \forall x\in\mathbb{R}_+.\]
\item
\textbf{[A2$_p$]} is satisfied if, for example, $p\pr{\cdot,0}$ has polynomial growth, the generic claim $C$ has a finite moment of the degree of such polynomial and $p$ is Lipschitz-continuous in $x$ uniformly in the first variable.
\item We consider the flow $\displaystyle{\bar{x}^x_t =x+\int_0^t\bar{p}\pr{\bar{x}^x_s}ds}$. Under the assumptions \textbf{[A1$_p$]}, \textbf{[A2$_p$]}, the application $\mathbb{R}_+\ni x\mapsto \bar{x}^x_t$ satisfies \[\bar{x}^x_t\leq\pr{x+t\norm{p}_0}e^{\pp{p}_1t}.\] It follows that, for $q>\pp{p}_1$,
$\displaystyle{\int_0^\infty e^{-qt}\bar{x}^x_tdt\leq Ax+B},\textnormal{ for some }A,B\in\mathbb{R}$. The arguments developed throughout the paper work as well under the uniform continuity of $p$ and this kind of linear growth assumption. We have chosen to limit ourselves to the above-mentioned assumptions in order to give better readability to the linearisation arguments.
\end{enumerate}
\end{remark}
\subsection{The Model and Immediate Properties}
 We consider the following dynamics for the reserve of an insurance company
\begin{equation*}
\label{Eq0}
X_t=x+\int_0^tp^{u_s}\pr{X_s}ds-\sum_{i=1}^{N_t}u_{\tau_i}\pr{C_i},
\end{equation*}
where $u_t \in \mathcal{R}$ is the reinsurance policy and $\tau_i$ stands for the time of $i$-th claim. We consider a rich enough probability space $\pr{\Omega,\mathcal{F},\mathbb{P}}$ supporting the (independent) Poisson process $N$ (of intensity $\lambda>0$) and the family of i.i.d. random variables (r.v.) $\pr{C_i}_{i\geq 1}$ modelling the claims. On this space we consider the natural right-continuous, $\mathbb{P}$-completed filtration $\mathbb{F}$.
\begin{enumerate}[label = \arabic*)]
\item Having fixed a triplet $\pi:=\pr{u,L,I}$ describing mixed continuous/singular strategies as explained before, the surplus process is given (under the $\mathbb{P}_x$), by the following equation
\begin{equation}
\label{Surpluspi}
X_t^\pi:=x+\int_0^t p^{u_s}\pr{X_s^\pi}ds-\sum_{i=1}^{N_t}u_{\tau_i}\pr{C_i}-L_t+I_t.
\end{equation}
\item The \textit{reinsurance} (or, more precisely, retention) \textit{policy} $u$ is an $\mathcal{R}$-valued predictable process, where, similar to \cite{azcue2005optimal}, $\mathcal{R}$ is one of the following:
\begin{itemize}
\item$\mathcal{R}_A$ (all Borel functions $u:\mathbb{R}_+\rightarrow\mathbb{R}_+$ such that (s.t.) $u(y)\leq y,\ y\geq 0$),
\item$\mathcal{R}^{a_0}_P$ (proportional retention $u(y)=u(1)y$, $u(1)\in\pp{a_0,1}$ with minimal proportion $a_0\in \pp{0,1}$),
\item $\mathcal{R}_{XL}$ (excess-of-loss policies $u(y)=\min\set{y,a}$, $a\geq 0$).
\end{itemize}
\item A right-continuous with left-limits (abbreviated to c\`{a}dl\`{a}g),  adapted process  $L$ such that $L_{0-}=0$ and which is non decreasing will be referred to as a \textit{dividend strategy};
\item A non-decreasing, c\`{a}dl\`{a}g, adapted process $I$ such that $I_{0-}=0$ will model the \textit{capital injection}.
\item Both $L$ and $I$ are such that $\displaystyle{\mathbb{E}_x \pp{\int_0^\infty e^{-\pp{p}_1s}\phi_sds}<\infty }$, where $\phi\in\set{I,L}$.
\item A \textit{strategy} is given by $\pi:=\pr{u,L,I}$ with components as before. The set of strategies of this type is $\Pi^+(x)$.
\item \begin{itemize}\item(before bankruptcy) for $t\geq 0$, the dividends are assumed to comply with
\begin{equation}
\label{RestrDividends}
\bigtriangleup L_t:=L_{t}-L_{t-}\leq X_{t-}^{\pi}-\bigtriangleup \bar{N}^u_t+\bigtriangleup I_t, \textnormal{ where }\bar{N}^u_t:=\sum_{i=1}^{N_t}u_{\tau_i}\pr{C_i}.
\end{equation}
\item (after \textit{bankruptcy} intervening as $X_{t-}^\pi-\bigtriangleup \bar{N}^u_t+\bigtriangleup I_t<0$), we freeze the components i.e.  $I_s=I_{t-}$, and $L_s=L_{t-},\ \forall{s\geq t}.$
\end{itemize}
\item The triplet $\pi\in\Pi^+(x)$ obeying to the restriction (7) is called an \textit{admissible strategy} and they form the class $\Pi(x)$.  Furthermore, whenever $u$ is a fixed reinsurance policy, we consider the section $\Pi_u(x):=\set{\pr{L,I}:\pi:=\pr{u,L,I}\in\Pi(x)}.$
\item For every such strategies $\pi\in\Pi(s)$,
\begin{enumerate}
\item we consider the bankruptcy time $\sigma_{0-}^{\pi}:=\inf\set{t\geq 0:\ X_{t-}^\pi-\bigtriangleup \bar{N}^u_t+\bigtriangleup I_t<0}$. This has the following significance. When a large claim intervenes and capital injection is considered too expensive, then bankruptcy is the solution to adopt. Whenever needed, the initial condition of $X^\pi$ (i.e. $x$) will be explicit in $\sigma_{0-}^{x,\pi}$;
\item The gain associated to such policies is
\begin{equation}
\label{Costpi}
v\pr{x,\pi}:=\mathbb{E}_x\pp{\int_0^{\sigma_{0-}^{\pi}}e^{-qs}\pr{dL_s-kdI_s}},
\end{equation}
where $x$ denotes the initial value of process $X$. The quantity $k\geq 1$ is the cost of one unit of injected capital;
\item The optimal gain is given by
\begin{equation}
\label{Value}
V(x):=\sup_{\pi\in\Pi(x)}v\pr{x,\pi},\ \forall x\in\mathbb{R}.
\end{equation}
\end{enumerate}
\end{enumerate}
\begin{remark}\label{RemSimultaneousIL}
If $\pi\in\Pi(x)$ is an admissible strategy, then, prior to $\sigma_{0-}^\pi$, it is sub-optimal to have $\bigtriangleup I_t \wedge \bigtriangleup L_t>0$. Indeed, the same dynamics (hence, the same bankruptcy time) and a better gain is got for $\tilde{I}_t:=I_t-\sum_{s\leq t\wedge\sigma_{0-}^\pi}\bigtriangleup L_s\wedge\bigtriangleup I_s,\ \tilde{L}_t:=L_t-\sum_{s\leq t\wedge\sigma_{0-}^\pi}\bigtriangleup L_s\wedge\bigtriangleup I_s$.
The same argument can be applied to the continuous parts.
\end{remark}
We have the following immediate properties.
\begin{proposition}
\label{PropBasicPropertiesXV}
\begin{enumerate}[label=\arabic*)]
\item Under the assumptions \textbf{[A1$_p$]} and \textbf{[A2$_p$]}, for every $x\in\mathbb{R}_+$, given a strategy $\pi\in\Pi^+ (x)$, the equation (\ref{Surpluspi}) admits a unique solution.
\item For any $x\in\mathbb{R}_+$, the family $\Pi(x)$ is not empty. When the initial positions $x',x\in\mathbb{R}_+$ satisfy $x\leq x'$, one has the inclusion $\Pi(x)\subset\Pi\pr{x'}$. Furthermore, for $\pi\in \Pi(x)$, one has, $\ \mathbb{P}$-a.s., $\sigma_{0-}^{x,\pi}\leq \sigma_{0-}^{x',\pi}$.
\item For all initial positions $x\in\mathbb{R}_+$, all strategies $\pi\in\Pi(x)$, and every $q\geq \pp{p}_1$,
\[\left\lbrace\begin{split}\max\set{X_{t-}^\pi,0}&\leq xe^{qt}+\norm{p}_0\frac{e^{qt}-1}{q}+\int_{\left[0,t \right)}e^{q\pr{t-s}}\pr{dI_s-dL_s}\\\max\set{X_t^\pi,0}&\leq xe^{qt}+\norm{p}_0\frac{e^{qt}-1}{q}+\int_{\pp{0,t}}e^{q\pr{t-s}}\pr{dI_s-dL_s}\end{split}\right.,\ \forall t\in\pp{0,\sigma^{\pi}_{0-}},\ \mathbb{P}-a.s.,\]on $\set{\sigma^{\pi}_{0-}<\infty}$.
\item  For arbitrary $\pi\in\Pi(x)$, one has the lower bound $x+\frac{\norm{p}_0}{\lambda+q}\leq V(x)$.
\item If $q\geq\pp{p}_1$, then, for every $\pi\in\Pi(x)$, one has the upper bound $v(x,\pi)\leq x+\frac{\norm{p}_0}{q}\leq x+\frac{\norm{p}_0}{\pp{p}_1}$.
\item For every $0<\varepsilon$,  one has $V\pr{x+\varepsilon}-V(x)\in\pp{0,k\varepsilon}$.
\end{enumerate}
\end{proposition}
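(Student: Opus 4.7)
For items (1) and (2), I construct the solution of \eqref{Surpluspi} pathwise: on each maximal open interval between two consecutive epochs (claim times $\tau_i$ and discontinuity times of $L,I$) the equation reduces to an ODE $\dot{x}_t = p^{u_t}(x_t)+(\text{continuous forcing})$ with $p^{u_t}$ uniformly $\pp{p}_1$-Lipschitz in the state (a consequence of \textbf{[A2$_p$]} and $u(y)\le y$), which is prolonged by the prescribed jump at each epoch; classical Cauchy--Lipschitz theory yields the unique c\`{a}dl\`{a}g solution. Non-emptiness of $\Pi(x)$ is witnessed by $(u,0,0)$ for any fixed $u\in\mathcal{R}$. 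For the inclusion $\Pi(x)\subset\Pi(x')$, I compare $X^{x,\pi}$ and $X^{x',\pi}$ for a common $\pi=(u,L,I)$: the jump and singular terms cancel from the difference, which solves a pure ODE with state-monotone drift (by \textbf{[A1$_p$]}) and non-negative initial data, so $X^{x',\pi}\ge X^{x,\pi}$ pointwise; this preserves \eqref{RestrDividends} and entails $\sigma_{0-}^{x,\pi}\le\sigma_{0-}^{x',\pi}$.

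For item (3), \textbf{[A2$_p$]} yields $\bar{p}(x)\le\norm{p}_0+\pp{p}_1 x$, hence $p^{u_s}(X_s^\pi)\le\norm{p}_0+\pp{p}_1\max\{X_s^\pi,0\}$; together with $\bar{N}^u_s\ge 0$ in \eqref{Surpluspi} this gives $X_t^\pi\le x+\int_0^t(\norm{p}_0+\pp{p}_1\max\{X_s^\pi,0\})\,ds+I_t-L_t$. A Stieltjes-form Gronwall inequality applied to $\max\{X^\pi,0\}$ with driving measure $dI-dL$ delivers the claimed estimate, and $q\ge\pp{p}_1$ allows the exponential factor $\pp{p}_1$ to be replaced by $q$.

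For items (4) and (5), the lower bound is realised by the explicit strategy ``pay $L_0=x$ immediately, use the (asymptotically) full retention $u(y)=y$, and then continuously distribute $dL_t=\bar{p}(0)\,dt$''; this keeps the reserve at $0$ until the first claim $\tau_1\sim\text{Exp}(\lambda)$ when bankruptcy occurs, and since $\bar{p}(0)=\norm{p}_0$ the resulting gain is $x+\norm{p}_0\,\mathbb{E}\pp{\int_0^{\tau_1}e^{-qs}\,ds}=x+\norm{p}_0/(\lambda+q)$. For the upper bound, I apply the It\^{o}--Meyer formula to $e^{-qt}X_t^\pi$ on $[0,T\wedge\sigma_{0-}^\pi)$ and take expectations: since $-qX_s^\pi+p^{u_s}(X_s^\pi)\le\norm{p}_0$ (thanks to $q\ge\pp{p}_1$ and $X_s^\pi\ge 0$), the terminal term and the jump integral against $d\bar{N}^u$ are non-negative, and $k\ge 1$ gives $dL-k\,dI\le dL-dI$, one deduces $\mathbb{E}\pp{\int_0^{T\wedge\sigma_{0-}^\pi}e^{-qs}(dL_s-k\,dI_s)}\le x+\norm{p}_0/q$. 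Monotone convergence as $T\to\infty$, legitimised by integrability condition (5) of the strategy definition, closes the estimate.

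For item (6), the inequality $V(x+\varepsilon)\ge V(x)$ follows from transforming any $\pi=(u,L,I)\in\Pi(x)$ into $\tilde{\pi}:=(u,L+\varepsilon\mathbf{1}_{[0,\infty)},I)$ starting from $x+\varepsilon$: the trajectory from $0^+$ onward and the bankruptcy time coincide with those of $\pi$, while the gain is augmented by $\varepsilon>0$. The bound $V(x+\varepsilon)\le V(x)+k\varepsilon$ uses the mirror construction: given $\pi'\in\Pi(x+\varepsilon)$, the strategy that injects $\varepsilon$ at $t=0$ from reserve $x$ (incurring cost $k\varepsilon$) and mimics $\pi'$ afterwards is admissible from $x$ and yields the gain $v(x+\varepsilon,\pi')-k\varepsilon$. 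The most delicate step in the whole proof will be item (5): the rigorous It\^{o}'s-formula manipulation with three distinct kinds of discontinuities (Poisson jumps, $\bigtriangleup L$, $\bigtriangleup I$) together with the $T\to\infty$ limit under the weighted integrability condition of the strategy.
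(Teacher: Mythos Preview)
Your proposal is correct and essentially mirrors the paper's own argument for items (1)--(4) and (6): pathwise construction via Cauchy--Lipschitz between epochs, monotone comparison for the inclusion $\Pi(x)\subset\Pi(x')$, the Gronwall/linear-comparison bound for item (3), the ``pay-all-then-distribute-premium-until-$\tau_1$'' strategy for item (4), and the shift of $L$ or $I$ at time $0$ for item (6).

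The one organisational difference worth noting is item (5). You treat it as ``the most delicate step'' and attack it with a fresh It\^{o}--Meyer computation on $e^{-qt}X_t^\pi$, juggling the three kinds of jumps and the terminal sign. The paper instead derives (5) as a one-line corollary of the already-proven item (3): multiply the pathwise bound there by $e^{-qt}$, use $\max\{X^\pi_t,0\}\ge 0$ on the left, and rearrange to get
\[
\int_{[0,t\wedge\sigma_{0-}^\pi]} e^{-qs}\,(dL_s - dI_s)\ \le\ x+\frac{\norm{p}_0}{q},
\]
whence $dL_s-k\,dI_s\le dL_s-dI_s$ and passage to the limit finish the job. Your It\^{o} argument is really re-deriving that same inequality (the drift bound $-qX+p^u(X)\le\norm{p}_0$, the non-negativity of the $d\bar N^u$-integral and of the left-limit terminal value $X^\pi_{(T\wedge\sigma)-}$ are exactly the ingredients hidden in the Gronwall step for (3)). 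So nothing is wrong, but once you have (3) in hand you can dispense with the separate It\^{o} manipulation and the associated care about the bankruptcy endpoint.
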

\hspace{1.5em}The proof is quite standard and follows from the assumptions on $p$. Item 3. follows from comparison with the explicit solution of the no-jump linear equation driven by $y\mapsto\norm{p}_0+qy$ and with singular controls $L,I$. Item 4. is got by using the no-injection, no-reinsurance, pay all reserve (and declare bankruptcy at first claim) strategy.  The upper bound in (5) follows from Proposition \ref{PropBasicPropertiesXV}-(3).  Finally, the Proposition \ref{PropBasicPropertiesXV}-(6) which is useful to get the absolutely continuity and derivative bounds of $V$ is obtained by tempering with the singular controls $L$ and $I$ at $0$ which allows to pass from $x$ to $x+\varepsilon$ or the other way around.\\

The following quantitative properties will prove useful in order to qualify the occupation measures associated to controlled trajectories and the larger set of constraints appearing in the linearized formulation of our initial control problem.
\begin{proposition}\label{PropEstimILX}Let $x\geq 0$ and $\pi:=\pr{u,L,I}\in\Pi(x)$ be $1$-optimal (in the sense that the associated cost $v(x,\pi)\geq V(x)-1$). Then, for every $t\geq 0$,
\begin{enumerate}[label=\arabic*)]
\item the injection process satisfies $\displaystyle{\mathbb{E}_x\pp{\int_{\pp{0,t\wedge\sigma_{0-}^{\pi}}}e^{-qs}dI_s}\leq \frac{2\norm{p}_0+\pp{p}_1}{(k-1)\pp{p}_1}};$
\item the dividend process satisfies $\displaystyle{\mathbb{E}_x\pp{\int_{\pp{0,t\wedge\sigma_{0-}^{\pi}}}e^{-qs}dL_s}\leq x+\frac{(k+1)\norm{p}_0+\pp{p}_1}{(k-1)\pp{p}_1}};$
\item the associated solution satisfies $\displaystyle{\mathbb{E}_x\pp{\underset{t\geq 0}\sup\ e^{-q\pr{t\wedge\sigma_{0-}^\pi}}\pr{X^\pi_{t\wedge\sigma^{\pi}_{0-}}\vee 0}}\leq x+\frac{(k+1)\norm{p}_0+\pp{p}_1}{(k-1)\pp{p}_1} };$
\item Furthermore, $\displaystyle{\mathbb{E}_x\pp{\int_{\pp{0,\sigma_{0-}^\pi}} e^{-qt}\pr{X^\pi_{t}\vee 0}dt}\leq \frac{1}{q-\pp{p}_1}\pr{x+\frac{(k+1)\norm{p}_0+\pp{p}_1}{(k-1)\pp{p}_1}}}.$
\end{enumerate}
\end{proposition}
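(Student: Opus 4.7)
The plan is to derive all four estimates from the pathwise bound in Proposition~\ref{PropBasicPropertiesXV}-(3), combined with the $1$-optimality $v(x,\pi)\geq V(x)-1$ and the lower bound $V(x)\geq x+\norm{p}_0/(\lambda+q)$ of Proposition~\ref{PropBasicPropertiesXV}-(4).

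First, I would evaluate the pathwise bound at $t\wedge\sigma_{0-}^\pi$, multiply through by $e^{-q(t\wedge\sigma_{0-}^\pi)}$ and invoke the non-negativity of $X^\pi_{(t\wedge\sigma_{0-}^\pi)-}\vee 0$ to obtain, $\mathbb{P}_x$-almost surely,
\[
\int_{[0,t\wedge\sigma_{0-}^\pi)} e^{-qs}\,dL_s \;\leq\; x + \frac{\norm{p}_0}{q} + \int_{[0,t\wedge\sigma_{0-}^\pi)} e^{-qs}\,dI_s.
\]
Taking expectations and subtracting $v(x,\pi) = \mathbb{E}_x[\int e^{-qs}(dL_s - k\,dI_s)]$ yields
\[
(k-1)\,\mathbb{E}_x\!\left[\int_{[0,t\wedge\sigma_{0-}^\pi)} e^{-qs}\,dI_s\right] \;\leq\; x + \frac{\norm{p}_0}{q} - v(x,\pi);
\]
inserting $v(x,\pi)\geq V(x)-1$ together with Proposition~\ref{PropBasicPropertiesXV}-(4) and $q\geq\pp{p}_1$ then produces item~(1). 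Item~(2) follows by substituting this injection bound into the pathwise control on $L$ above and using $\norm{p}_0/q\leq\norm{p}_0/\pp{p}_1$.

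Item~(3) is obtained from the same master inequality by dropping the non-positive $-dL$ contribution, taking $\sup_t$ and then expectation, and absorbing the remaining injection term via item~(1). For item~(4) the idea is to apply Proposition~\ref{PropBasicPropertiesXV}-(3) with the parameter set to $\pp{p}_1$ in place of the discount $q$, multiply by $e^{-qt}$, and integrate in $t\in[0,\sigma_{0-}^\pi]$. The two deterministic pieces contribute $x/(q-\pp{p}_1)$ and at most $\norm{p}_0/(\pp{p}_1(q-\pp{p}_1))$, while a Fubini interchange on the Stieltjes part produces
\[
\mathbb{E}_x\!\left[\int_0^{\sigma_{0-}^\pi} e^{-qt}\!\int_{[0,t]} e^{\pp{p}_1(t-s)}(dI_s - dL_s)\,dt\right] \;\leq\; \frac{1}{q-\pp{p}_1}\,\mathbb{E}_x\!\left[\int e^{-qs}\,dI_s\right],
\]
where the $-dL$ contribution is discarded by sign considerations. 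Plugging in item~(1) assembles the stated bound.

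The main obstacle is essentially careful book-keeping of the constants and the handling of the half-open Stieltjes intervals, together with any possible jumps at the bankruptcy time $\sigma_{0-}^\pi$. Conceptually every step is a routine combination of the a.s.\ bound of Proposition~\ref{PropBasicPropertiesXV}-(3) with the $1$-optimality hypothesis; no new analytical ingredient beyond Fubini and monotone convergence is required.
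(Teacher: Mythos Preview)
Your strategy for items (2)--(4) is essentially the paper's, but there is a real gap in your derivation of item (1). When you write ``subtracting $v(x,\pi)$ yields $(k-1)\,\mathbb{E}_x\bigl[\int_{[0,t\wedge\sigma_{0-}^\pi)} e^{-qs}\,dI_s\bigr]\le x+\norm{p}_0/q-v(x,\pi)$'', this would require
\[
\mathbb{E}_x\Bigl[\int_{[0,t\wedge\sigma_{0-}^\pi)} e^{-qs}\,(dL_s-k\,dI_s)\Bigr]\;\ge\; v(x,\pi)=\mathbb{E}_x\Bigl[\int_{[0,\sigma_{0-}^\pi]} e^{-qs}\,(dL_s-k\,dI_s)\Bigr],
\]
i.e.\ that the discounted residual gain on $[t\wedge\sigma_{0-}^\pi,\sigma_{0-}^\pi]$ is non-positive. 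Nothing in the model prevents large dividend payments after time $t$, so this inequality is false in general and the subtraction step is invalid for finite $t$. This is not merely ``book-keeping of half-open intervals''; you are missing a device that links the \emph{full} gain $v(x,\pi)$ to the \emph{truncated} integrals.

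The paper supplies this link through the easy direction of the dynamic programming principle,
\[
v(x,\pi)\;\le\;\mathbb{E}_x\Bigl[\int_{[0,t\wedge\sigma_{0-}^\pi]} e^{-qs}(dL_s-k\,dI_s)\;+\;e^{-q(t\wedge\sigma_{0-}^\pi)}\,V\bigl(X^\pi_{t\wedge\sigma_{0-}^\pi}\bigr)\Bigr],
\]
and then estimates the continuation term via $V(y)\le y^++\norm{p}_0/\pp{p}_1$ (Proposition~\ref{PropBasicPropertiesXV}(5)); the combination $\int e^{-qs}(dL_s-dI_s)+e^{-q(t\wedge\sigma)}\bigl(X^\pi_{t\wedge\sigma}\vee 0\bigr)$ is controlled by Proposition~\ref{PropBasicPropertiesXV}(3). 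This is where the extra $\norm{p}_0/\pp{p}_1$ in the numerator $2\norm{p}_0$ originates. Your route can in fact be repaired without DPP: first pass to $t\to\infty$ so that the interval in the pathwise bound matches that of $v(x,\pi)$ (the subtraction is then legitimate and gives $(k-1)\,\mathbb{E}_x[\int_{[0,\sigma_{0-}^\pi]}e^{-qs}dI_s]\le x+\norm{p}_0/q-v(x,\pi)$), and then recover finite $t$ by monotonicity of $t\mapsto\int_{[0,t\wedge\sigma_{0-}^\pi]}e^{-qs}dI_s$. Done this way your argument is even slightly sharper (one gets $\norm{p}_0$ in place of $2\norm{p}_0$), but as written it omits this step.
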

The proof is postponed to the Appendix.
\section{The Hamilton-Jacobi-Bellman Variational Inequality}\label{Section3}
As it is by now standard in this kind of control problems, we consider the Hamilton-Jacobi integro-differential inequality
\begin{equation}
\label{HJB}
\left\lbrace
\begin{split}
&\max\set{1-V'(x),V'(x)-k,H\pr{x,V,V'(x)}}=0,\ x\in\mathbb{R}_+,\\
&\min\set{\max\set{1-V'(x),V'(x)-k},V(x)}=0,\ x\in\mathbb{R}_-\setminus\set{0}.
\end{split}\right..
\end{equation}
Here, the Hamiltonian $H$ is defined as
\begin{equation}
\label{Hamiltonian}
H\pr{x,\psi,a}:=\sup_{u\in\mathcal{R}}\pr{-\pr{\lambda+q}\psi(x)+p^u(x)a+\lambda\int_{\mathbb{R}_+}\psi(x-u(y))dF(y)},
\end{equation}for continuous $\psi$ and we also define \[\mathcal{L}^{u}\psi(x)=p^u(x)\psi'(x)+\lambda\int_{\mathbb{R}_+}\psi(x-y)dF^u(y)-\pr{\lambda+q}\psi(x),\]when $\psi$ is absolutely continuous.
Let us give a rather immediate characterization of $V$ in connection to the above system
\begin{proposition}\label{Vsupersol}
The function $V$ is non negative, absolutely continuous ($\mathcal{AC}$) and a super-solution to \eqref{HJB}\footnote{In the sense that for Lebesgue-almost every $x\in\mathbb{R}$ the function $\phi$ has a derivative $\phi'(x)$ and the equation \eqref{HJB} is satisfied at $x$. In particular, note that $V$ is non-decreasing.}such that $V(x)\leq x+\frac{\norm{p}_0}{q}$, for all $x\geq 0$, and $V(0)\geq \frac{\norm{p}_0}{\lambda+q}>0$.
\end{proposition}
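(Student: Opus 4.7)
The upper bound $V(x)\le x+\|p\|_0/q$ on $\mathbb{R}_+$ and the lower bound $V(0)\ge \|p\|_0/(\lambda+q)>0$ are direct consequences of Proposition \ref{PropBasicPropertiesXV} items (5) and (4) respectively, and non-negativity is inherited from (4). From Proposition \ref{PropBasicPropertiesXV}-(6), $V$ is $k$-Lipschitz on $\mathbb{R}_+$, hence absolutely continuous with $V'(x)\in[0,k]$ at Lebesgue a.e.\ $x>0$. I will extend $V\equiv 0$ to $\mathbb{R}_-$ (no gain after bankruptcy), so that on $\mathbb{R}_-\setminus\{0\}$ the second line of \eqref{HJB} holds trivially ($V(x)=0$ while $\max\{1-V'(x),V'(x)-k\}=1\ge 0$).

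\textbf{Paragraph 2 (refined derivative bounds).} The next task is to sharpen the envelope of $V'$ to $[1,k]$. Fix $0<\varepsilon<x$ and $\pi=(u,L,I)\in\Pi(x-\varepsilon)$, and define $\tilde\pi:=(u,L+\varepsilon\mathbf{1}_{[0,\infty)},I)$: an additional dividend $\varepsilon$ is paid at $t=0$. The admissibility constraint \eqref{RestrDividends} at $t=0$ is satisfied because the initial reserve $x$ accommodates the extra $\varepsilon$; subsequent to $0^+$ the dynamics coincide with those of $X^{\pi}$ launched from $x-\varepsilon$, so the bankruptcy times agree and $v(x,\tilde\pi)=\varepsilon+v(x-\varepsilon,\pi)$. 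Taking the supremum gives $V(x)\ge\varepsilon+V(x-\varepsilon)$ and hence $V'(x)\ge 1$ a.e. The companion bound $V'(x)\le k$ is the direct content of Proposition \ref{PropBasicPropertiesXV}-(6), obtained symmetrically by injecting $\varepsilon$ at $t=0$ at cost $k\varepsilon$.

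\textbf{Paragraph 3 (the Hamilton--Jacobi part and main obstacle).} It remains to show $H(x,V,V'(x))\le 0$ a.e., which combined with Paragraph 2 yields $\max\{1-V',V'-k,H\}\le 0$ a.e. For each $u^*\in\mathcal{R}$, the pure-reinsurance strategy $\pi^*:=(u^*,0,0)\in\Pi(x)$ carries no running reward, and dynamic programming at the stopping time $h\wedge \tau_1\wedge \sigma_{0-}^{\pi^*}$ (with $\tau_1$ the first claim) gives
\[ V(x)\ \ge\ \mathbb{E}_x\bigl[e^{-q(h\wedge\tau_1\wedge\sigma_{0-}^{\pi^*})}\,V(X^{\pi^*}_{h\wedge\tau_1\wedge\sigma_{0-}^{\pi^*}})\bigr]. \]
On $[0,\tau_1)$ the trajectory is the deterministic flow of $p^{u^*}$ issued from $x$; since $V$ is Lipschitz, $s\mapsto V$ evaluated along that flow is absolutely continuous and the fundamental theorem of calculus applies. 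Conditioning on the exponential $\tau_1$ and expanding to first order in $h$ at every Lebesgue point $x$ of $V'$, the inequality collapses to
\[ 0\ \ge\ -(\lambda+q)V(x)+p^{u^*}(x)V'(x)+\lambda\int_{\mathbb{R}_+} V(x-u^*(y))\,dF(y)\ =\ \mathcal{L}^{u^*}V(x). \]
Taking the supremum over $u^*$ (separability handles $\mathcal{R}_P^{a_0}$ and $\mathcal{R}_{XL}$ directly; for $\mathcal{R}_A$ use a measurable pointwise selection, the integrand being additive in $u^*$-dependent pieces) yields $H(x,V,V'(x))\le 0$ a.e. The matching inequality $\max\{\cdot\}\ge 0$ needed to realise the a.e.\ equation \eqref{HJB} follows at points where $V'(x)\in(1,k)$ strictly, because there a marginal dividend or injection is strictly sub-optimal, so any $\delta$-optimal strategy in a neighbourhood is of continuation type and DPP equality forces $H=0$; at $V'(x)\in\{1,k\}$ one of the first two terms is already zero. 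The principal technical obstacle is precisely the first-order expansion of $V$ along $X^{\pi^*}$ under only Lipschitz regularity (handled at Lebesgue points of $V'$), together with the interchange of the supremum over the possibly non-separable class $\mathcal{R}$ and the a.e.\ quantifier in $x$.
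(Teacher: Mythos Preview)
Your overall approach matches the paper's: use the dynamic programming principle at $t\wedge\tau_1$ with the pure-reinsurance strategy $(u,0,0)$ and pass to the limit to obtain $\mathcal{L}^{u}V(x)\le 0$, together with the derivative bounds $V'\in[1,k]$ from tampering with $L$ and $I$ at time $0$. Paragraphs 2 and 3 are essentially what the paper does (the paper replaces your Lebesgue-point expansion by the monotonicity shortcut $\Phi^{x,u}_t\ge x+tp^u(x)$, but this is a cosmetic difference).

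There is, however, a genuine error in Paragraph 1. You write ``I will extend $V\equiv 0$ to $\mathbb{R}_-$''. The value function is already defined on $\mathbb{R}$ by \eqref{Value}, and for $x<0$ one may inject $-x$ at time $0$ (cost $-kx$) and then continue optimally from $0$; comparing with immediate bankruptcy yields
\[
V(x)=\bigl(V(0)+kx\bigr)^+,\qquad x<0,
\]
which is strictly positive on $\bigl(-V(0)/k,0\bigr)$. This is exactly what the paper records right after the proposition. The point matters beyond the second line of \eqref{HJB}: the operator $\mathcal{L}^{u}V(x)$ contains $\lambda\int_{\mathbb{R}_+}V(x-u(y))\,dF(y)$, and for large claims this integral samples $V$ on $\mathbb{R}_-$. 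If you insist on $V\equiv 0$ there, your Paragraph 3 expansion would only yield $\mathcal{L}^{u}\tilde V\le 0$ for the truncated $\tilde V$, which is strictly weaker than $\mathcal{L}^{u}V\le 0$; conversely, if in Paragraph 3 you use the true $V$ (as your displayed formula suggests), then Paragraph 1 is inconsistent with it. Replace the extension by the correct formula and re-verify the second line of \eqref{HJB}: on $(-V(0)/k,0)$ one has $V'=k$, so $\max\{1-V',V'-k\}=0$; on $(-\infty,-V(0)/k)$ one has $V=0$. Either way the $\min$ equals $0$.

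Finally, the last part of Paragraph 3 (forcing $\max\{\cdot\}\ge 0$ to get the full equation) is not required by the proposition, which only asserts the super-solution property; the paper does not attempt it here and your sketch of that direction is not rigorous as stated.
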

The proof is immediate and standard. It follows from the dynamic programming principle and it is sketched in the Appendix.
On $\mathbb{R}_+$, one only needs to show that $\mathcal{L}^uV\leq 0$ and this follows from an infinitesimal argument at $t\wedge \tau_1$ (the time of the first claim) for no-dividend, no-injection strategy. For negative reserves, the two possibilities are to declare bankruptcy (leading to $0$-value) or to inject capital to reach $0$ position and continue with the best strategies from there. This yields $V(x)=\pr{V(0)+kx}^+$.\\
We proceed with the following basic remark.
\begin{remark}
Let $\phi$ be a super-solution to \eqref{HJB} which is non negative and absolutely continuous. The function $\phi$ is zero for arguments $x\leq -\phi(0)$ (in order to comply with the equation on the negative arguments). As a consequence, there exists $x_0\in\mathbb{R}$ for which $\phi(x)=0,\ \forall x\leq x_0$ and Lebesgue-a.e. on $(x_0,\infty)$, $\phi$ admits a derivative in $\pp{1,k}$. Moreover, please note that $\phi$ is non-decreasing.\end{remark}
Before proceeding with the linear programming arguments, let us give the following useful result.
\begin{proposition}\label{PropSupersol}Let $\phi$ be a super-solution to \eqref{HJB} which is non negative and absolutely continuous and such that
\begin{enumerate}
\item[(a)] $\phi(0)>0$;
 \item[(b)] $\phi$ has a linear growth and $\phi(y)$ is upper bounded by $c+y$, for some constant $c\in\mathbb{R}$ and $y\geq 0$.
\end{enumerate} Furthermore, let \[\mathbb{R}_-^*\ni x_0:=\sup\set{x\in\mathbb{R}:\ \phi(x)=0}.\]
\begin{enumerate}
\item There exists a family of non-decreasing functions $\pr{\phi_n}_{n\in\mathbb{N}}$ of class $C^1$ s.t.
\begin{enumerate}
\item $\phi(x)\leq \phi_n(x)\leq \phi(x)+\frac{\tilde{c}}{n}$, for all $x\in\mathbb{R}$;
\item $1\leq \phi'_n(x)\leq k$ on $\left[x_0,\infty\right)$,
\item ${\phi_n'}$ converges (as $n\rightarrow\infty)$ to $\phi'$, pointwise a.e. on $\left[x_0,\infty\right)$, and
\item $\mathcal{L}^u\phi_n(x)\leq 0$, for all $x\geq 0$.
\end{enumerate}
\item Moreover, $V(x)\leq \phi(x)$, for every $x\in\mathbb{R}_+$.
\end{enumerate}
Here, $\tilde{c}$ is a generic constant independent of $n\geq 1$ and $x\in \mathbb{R}$.
\end{proposition}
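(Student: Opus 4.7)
I would establish item (1) via a \emph{right-supported} mollification of $\phi$, which produces smooth functions whose derivatives stay in $[1,k]$ and for which the super-solution property is preserved thanks to the monotonicity of $p$ in its second argument. Item (2) is then a classical verification argument based on the It\^{o}--Meyer change-of-variable formula applied to $e^{-qs}\phi_n(X^\pi_s)$ stopped at $t\wedge\sigma_{0-}^\pi$, followed by sending $t\to\infty$ and $n\to\infty$.

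For the construction, I fix $\rho\in C^\infty(\mathbb{R})$ with $\rho\geq 0$, $\mathrm{supp}\,\rho\subset[0,1]$, and $\int_{\mathbb{R}}\rho=1$, and set
\begin{equation*}
\phi_n(x) := \int_{\mathbb{R}}\phi(x+s)\,n\rho(ns)\,ds,\qquad x\in\mathbb{R}.
\end{equation*}
Then $\phi_n\in C^\infty$, $\phi_n$ inherits monotonicity from $\phi$, and $\phi\leq\phi_n\leq\phi(\cdot+1/n)$. The a.e.\ bound $\phi'\leq k$ on $[x_0,\infty)$ combined with $\phi\equiv 0$ below $x_0$ yields item (a) with $\tilde c:=k$; items (b) and (c) follow from $\phi_n'(x)=\int_{\mathbb{R}}\phi'(x+s)\,n\rho(ns)\,ds$, the pointwise bound $1\leq\phi'\leq k$ on $[x_0,\infty)$, and Lebesgue differentiation. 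For the decisive item (d), a direct computation gives
\begin{equation*}
\mathcal{L}^u\phi_n(x)=\int_{\mathbb{R}}\pp{p^u(x)\phi'(x+s)+\lambda\int_{\mathbb{R}_+}\phi(x+s-y)\,dF^u(y)-(\lambda+q)\phi(x+s)}n\rho(ns)\,ds,
\end{equation*}
and the bracketed expression coincides with $\mathcal{L}^u\phi(x+s)-\pr{p^u(x+s)-p^u(x)}\phi'(x+s)$, which is dominated by $\mathcal{L}^u\phi(x+s)\leq 0$ thanks to \textbf{[A1$_p$]}, $\phi'\geq 0$, and the super-solution property applied at $x+s\geq 0$ (valid since $x\geq 0$ and $s\in[0,1/n]$). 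The point of choosing a right-supported kernel is precisely to produce this sign; a symmetric mollifier would require knowledge of $\mathcal{L}^u\phi$ on $\mathbb{R}_-$, which is not available.

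For item (2), given $\pi=(u,L,I)\in\Pi(x)$ I apply the change-of-variable formula to $e^{-qs}\phi_n(X^\pi_s)$ on $\pp{0,t\wedge\sigma_{0-}^\pi}$, splitting the singular controls into their continuous and jump parts and compensating the Poisson claim term. Since $X^\pi_s\geq 0$ for $s<\sigma_{0-}^\pi$, item 1(d) gives $\mathcal{L}^{u_s}\phi_n(X^\pi_s)\leq 0$ on $\pp{0,\sigma_{0-}^\pi}$, while the mean-value bounds $1\leq\phi_n'\leq k$ control the continuous and jump contributions of $L$ and $I$ (and $\phi_n\geq 0$ handles the bankruptcy jump trivially), yielding
\begin{equation*}
\phi_n(x)\geq\mathbb{E}_x\pp{e^{-q(t\wedge\sigma_{0-}^\pi)}\phi_n\pr{X^\pi_{t\wedge\sigma_{0-}^\pi}}}+\mathbb{E}_x\pp{\int_{\pp{0,t\wedge\sigma_{0-}^\pi}}e^{-qs}\pr{dL_s-k\,dI_s}}.
\end{equation*}
Proposition \ref{PropEstimILX}-(3) and the linear growth of $\phi_n$ justify $t\to\infty$ by dominated convergence, so $\phi_n(x)\geq v(x,\pi)$; item 1(a) then allows $n\to\infty$ to obtain $\phi\geq V$ on $\mathbb{R}_+$. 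The main technical obstacle I anticipate is precisely the uniform integrability needed for $t\to\infty$ in view of unbounded injections; this is overcome using the uniform bound $\phi_n(y)\leq c+y^{+}+k/n$ together with the moment estimates of Proposition \ref{PropEstimILX}.
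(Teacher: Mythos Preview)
Your proof is correct, and for item 1(d) it is actually cleaner than the paper's argument. Both of you mollify with a right-supported kernel $\rho_n$ to produce $\psi_n(x)=\int\phi(x+s)\rho_n(s)\,ds$; the difference lies in how the generator inequality is obtained. The paper picks an auxiliary point $x_n\in[x,x+2/n]$ where $\phi'(x_n)$ nearly realises $\sup_{[x,x+2/n]}\phi'$, compares $\mathcal{L}^u\psi_n(x)$ with $\mathcal{L}^u\phi(x_n)$, and this comparison leaves a residual error of order $1/n$. They then \emph{add a constant} $\frac{1+(3\lambda+2q)k}{qn}$ to $\psi_n$ so that the extra $-q$ in the zero-order term absorbs this residual, yielding $\mathcal{L}^u\phi_n\leq 0$. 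You instead write $\mathcal{L}^u\phi_n(x)=\int\big[\mathcal{L}^u\phi(x+s)-\big(p^u(x+s)-p^u(x)\big)\phi'(x+s)\big]\rho_n(s)\,ds$ and observe that the correction term has the right sign by \textbf{[A1$_p$]} (monotonicity of $p$ in its second argument) and $\phi'\geq 0$, so $\mathcal{L}^u\phi_n\leq 0$ without any additive compensation. This buys you a smaller $\tilde c=k$ and avoids the somewhat delicate choice of $x_n$; the paper's route would still work if $p$ were merely Lipschitz but not monotone in $x$.

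For item 2 your verification argument matches the paper's. Two minor points you left implicit but which the paper spells out: (i) Proposition~\ref{PropEstimILX} only applies to $1$-optimal $\pi$, so you should restrict to those before invoking it for the dominated-convergence step (this costs nothing since $V(x)=\sup_{\pi\ 1\text{-optimal}}v(x,\pi)$); (ii) when bounding the injection-jump contribution $\int_0^{\Delta I_s}\phi_n'(X^\pi_{s-}-\Delta\bar N^u_s+z)\,dz$, the integrand can be evaluated at negative arguments, so you need $\phi_n'\leq k$ on all of $\mathbb{R}$, not just on $[x_0,\infty)$---this follows since $\phi'\equiv 0$ below $x_0$, hence $\phi'\leq k$ a.e.\ on $\mathbb{R}$ and the same for $\phi_n'$ (this is exactly the content of Remark~\ref{dPsi}).
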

The proof is postponed to the Appendix.
\begin{remark}\label{dPsi}
\begin{enumerate}\item A careful look at the proof shows that the functions $\psi_n$ and, hence, $\phi_n$ have $k$-upper-bounded derivatives on the entire state space $\mathbb{R}$.
\item When $\psi_n$ are constructed from $V$ (recall that $V(x)=\pr{V(0)+kx}^+$ on $\mathbb{R}_-$), one has $\phi_n'(y)=k,\ \forall y\in\pp{-\frac{V(0)}{k},\frac{-2}{n}}$.
\end{enumerate}
\end{remark}

\section{Linear Programming Arguments}\label{Section4}
\subsection{Occupation Measures}
Whenever $L$ and $I$ are as before, we consider \[\mathcal{J}(\omega):=\mathcal{J}^{L}(\omega)\cup\mathcal{J}^{I}(\omega):=\set{s\in\mathbb{R}_+:\ \Delta L_s\pr{\omega}\neq 0}\cup\set{s\in\mathbb{R}_+:\ \Delta I_s(\omega)\neq 0},\  \omega\in\Omega,\] and
\[\tilde{\mathcal{J}}(\omega):=\set{s\in\mathbb{R}_+:\ \Delta L_s\pr{\omega}+\Delta I_s(\omega)\neq 0}\cup\set{\tau_i(\omega): \ i\geq 1},\  \omega\in\Omega,\]where $\tau_i$ designates the time of arrival for the $i^{th}$ claim ($i\geq 1$). These constitute the times at which the reserve $X^{x,\pi}$ has a jump.
Furthermore, we distinguish four non-decreasing processes corresponding to the continuous and discrete part of $L$, resp. $I$ and their associated $q$-discounted measures $L^c$, $L-L^c$, $I^c$, $I-I^c$.
We apply It\^o's formula to (the finite variation process) $s\mapsto e^{-qs}\phi\pr{X_s^{\pi}}$ on $\pp{0,t\wedge\sigma_{0-}^\pi}$ and owing to Remark \ref{RemSimultaneousIL}, we get \begin{equation}\label{Ito}\begin{split}&\mathbb{E}_x\pp{e^{-q\pr{t\wedge\sigma_{0-}}}\phi\pr{X_{t\wedge\sigma_{0-}}^{\pi}}}\\
=&\phi(x)+\mathbb{E}_x\pp{\int_0^{t\wedge\sigma_{0-}^\pi}\pr{-qe^{-qs}\phi(X^\pi_{s-})+e^{-qs}\phi'\pr{X_{s-}^{\pi}}p^{u_s}\pr{X_{s-}^{\pi}}}ds}\\&-\mathbb{E}_x\pp{\int_0^{t\wedge\sigma_{0-}^\pi}e^{-qs}\phi'\pr{X_{s-}^{\pi}}\pr{dL_s^c-dI_s^c}}\\&+\mathbb{E}_x\pp{\sum_{s\in\pp{0, t\wedge\sigma_{0-}^\pi}\cap\tilde{\mathcal{J}}}e^{-qs}\pr{\phi\pr{X_{s-}^{\pi}-\Delta\bar{N}_s^{u_{s}}-\Delta L_s+\Delta I_s}-\phi\pr{X_{s-}^{\pi}}}}\\
=&\phi(x)\!+\!\mathbb{E}_x \!\pp{\int_0^{t\wedge\sigma_{0-}^\pi}\!\!e^{-qs} \! \pr{\lambda\!\int_{\mathbb{R}_+}\!\!\phi\!\pr{X_{s-}^\pi \!-\!z}F^{u_s}(dz)\!-\!\pr{q\!+\!\lambda}\phi\pr{X_{s-}^\pi}\!+\! \phi'\pr{X_{s-}^{\pi}}p^{u_s}\pr{X_{s-}^{\pi}} \!}\!ds}\\
&-\mathbb{E}_x \!\! \pp{\!\int_{0}^{t\wedge\sigma_{0-}^\pi}e^{-qs}\!\phi'\pr{X^\pi_{s-}}\!\pr{dL_s^c \!-\!dI_s^c}} \!\!+\mathbb{E}_x \!\!\pp{\sum_{s\in \mathcal{J}^I\cap{\pp{0,t\wedge\sigma_{0-}^\pi}}} \!\!e^{-qs}\!\int_0^{\Delta I_s}\! \!\phi'\pr{X_{s-}^{\pi}\!-\!\Delta\bar{N}_s^{u_{s}} \!+\! z}dz}\\&-\mathbb{E}_x\pp{\sum_{s\in \mathcal{J}^{L}\cap{\pp{0,t\wedge\sigma_{0-}^\pi}}}e^{-qs}\int_0^{\Delta L_s}\phi'\pr{X_{s-}^{\pi}-\Delta\bar{N}_s^{u_{s}}-z}dz}.
\end{split}\end{equation}
In view of this formula, we consider, for $x\geq0$, and admissible policies $\pr{u,L,I}\in\Pi(x)$ and for the stopping times $\sigma\in\mathcal{T}:=\set{t\wedge\sigma_{0-}:\ t\geq 0}$ the following occupation measures
\begin{enumerate}
\item[(i)] $\gamma_0\in\mathcal{P}\pr{\mathbb{R}_+\times\mathbb{R}}$, \[\gamma_0\pr{ds_1dy_1}=\mathbb{P}_x\pr{\sigma\in ds_1,\ X^\pi_\sigma\in dy_1};\]
\item[(ii)] $\gamma_1\in\mathcal{M}^+\pr{\mathbb{R}_+\times\mathbb{R}\times\mathbb{R}_+\times \mathbb{R}_+\times\mathcal{R}}$,\[\gamma_1\pr{ds_1dy_1ds_2dy_2du}:=
\mathbb{E}_x\pp{\mathbf{1}_{\set{\sigma\in ds_1,\ X^\pi_{\sigma}\in dy_1}}e^{-qs_2}\mathbf{1}_{\set{s_2\in \pp{0,s_1}, \ X^\pi_{s_2-}\in dy_2},\ u_{s_2}\in du} ds_2};\]
\item[(iii)]$\gamma_2\in\mathcal{M}^+\pr{\mathbb{R}_+\times\mathbb{R}\times\mathbb{R}_+\times \mathbb{R}_+\times\mathcal{R}\times \mathbb{R}_+}$,
\begin{align*}
&\gamma_2\pr{ds_1dy_1ds_2dy_2dudl}:=\mathbb{E}_x\pp{\mathbf{1}_{\set{\sigma\in ds_1,\ X^\pi_{\sigma}\in dy_1}}\mathbf{1}_{\set{s_2\in \pp{0,s_1}, \ X^\pi_{s_2-}\in dy_2}}e^{-qs_2}L^c\pr{ds_2}},\\
&+\mathbb{E}_x \!\! \pp{\mathbf{1}_{\set{\sigma\in ds_1,\ X^\pi_{\sigma}\in dy_1}} \!\int_{\mathbb{R}_+} \! \! \mathbf{1}_{\set{X^\pi_{s_2-}-\Delta\bar{N}_{s_2}^{u}-z\in dy_2,\ z\leq l,\ u_{s_2}\in du}}dz\delta_{\Delta L_{s_2}}(dl)e^{-qs_2}\pr{\sum_{s\in\mathcal{J}\cap\pp{0,s_1}} \!\! \delta_{s}\pr{ds_2} \!}\!};
\end{align*}
\item[(iv)]$\gamma_3\in \mathcal{M}^+\pr{\mathbb{R}_+\times\mathbb{R}\times\mathbb{R}_+\times \mathbb{R}_+\times\mathcal{R}\times \mathbb{R}_+}$,
\begin{align*}&\gamma_3\pr{ds_1dy_1ds_2dy_2dudi}:=\mathbb{E}_x\pp{\mathbf{1}_{\set{\sigma\in ds_1,\ X^\pi_{\sigma}\in dy_1}}\mathbf{1}_{\set{s_2\in \pp{0,s_1}, \ X^\pi_{s_2-}\in dy_2}}e^{-qs_2}I^c\pr{ds_2}} \\
&+\mathbb{E}_x \!\! \pp{\mathbf{1}_{\set{\sigma\in ds_1,\ X^\pi_{\sigma}\in dy_1}} \! \int_{\mathbb{R}_+} \!\! \mathbf{1}_{\set{X^\pi_{s_2-}-\Delta\bar{N}_{s_2}^{u}+z\in dy_2,\ z\leq i,\ u_{s_2}\in du}}dz\delta_{\Delta I_{s_2}}(di) e^{-qs_2}\pr{\sum_{s\in\mathcal{J}\cap\pp{0,s_1}} \!\! \delta_{s}\pr{ds_2} \!}\!}.
\end{align*}
\end{enumerate}
A look at the properties in Proposition \ref{PropEstimILX}, the definitions of the above measures and It\^{o}'s formula in \eqref{Ito} yield the following structural properties (for 1-optimal policies).
\begin{equation}\label{StrProperties}
\setlength{\abovedisplayskip}{3pt}
\setlength{\belowdisplayskip}{3pt}
\begin{split}
(\rm{i})\ &\int_{\mathbb{R}_+\times\mathbb{R}}e^{-qs_1}\pr{y_1\vee 0}\gamma_0\pr{ds_1dy_1}\leq x^++\frac{(k+1)\norm{p}_0+\pp{p}_1}{(k-1)\pp{p}_1};\\
(\rm{ii})\ &\left\lbrace\begin{split}(a)\ &\gamma_1\in \mathcal{M}^+_{\frac{1}{q}}\pr{\mathbb{R}_+\times\mathbb{R}\times\mathbb{R}_+\times \mathbb{R}_+\times\mathcal{R}};\\
(b)\ & \gamma_1\pr{ds_1dy_1,\mathbb{R}_+\times \mathbb{R}_+\times\mathcal{R}}=\frac{1-e^{-qs_1}}{q}\gamma_0\pr{ds_1dy_1};\\
(c)\ & \int_{\mathbb{R}_+\times\mathbb{R}\times\mathbb{R}_+\times \mathbb{R}_+\times\mathcal{R}}y_2\gamma_1\pr{ds_1dy_1ds_2dy_2du}\leq \frac{1}{q-\pp{p}_1}\pr{x^++\frac{(k+1)\norm{p}_0+\pp{p}_1}{(k-1)\pp{p}_1}};
\end{split}\right.\\
(\rm{iii})\ &\gamma_2\in\mathcal{M}^+_{x^++\frac{(k+1)\norm{p}_0+\pp{p}_1}{(k-1)\pp{p}_1}}\pr{\mathbb{R}_+\times\mathbb{R}\times\mathbb{R}_+\times \mathbb{R}_+\times\mathcal{R}\times \mathbb{R}_+};\\
(\rm{iv})\ &\gamma_3\in\mathcal{M}^+_{(-x)^++\frac{2\norm{p}_0+\pp{p}_1}{(k-1)\pp{p}_1}}\pr{\mathbb{R}_+\times\mathbb{R}\times\mathbb{R}_+\times \mathbb{R}_+\times\mathcal{R}\times \mathbb{R}_+};\\
(\rm{v}) \ &\textnormal{For every regular test function with linear growth } \phi\in C^1\pr{\mathbb{R};\mathbb{R}},\\
&\left.\begin{split}\int_{\mathbb{R}_+\times\mathbb{R}} e^{-qs_1}\phi\pr{y_1}\gamma_0\pr{ds_1,dy_1}=&\phi(x)+\int_{\mathbb{R}_+\times\mathcal{R}} \mathcal{L}^u\phi\pr{y_2}\gamma_1\pr{\mathbb{R}_+\,\mathbb{R},\mathbb{R}_+,dy_2,du}\\&+\int_{\mathbb{R}_+} \phi'\pr{y_2}\pr{\gamma_3-\gamma_2}\pr{\mathbb{R}_+,\mathbb{R},\mathbb{R}_+,dy_2,\mathcal{R},\mathbb{R}_+}.\end{split}\right.
\end{split}
\end{equation}
\begin{remark}\label{RemStructure}
\begin{enumerate}
\item Whenever $t=0$, the occupation measure $\gamma_0$ is a Dirac one $\delta_{\pr{0,x}}$, $\gamma_1$ is null (thus, still obeys to (ii)(b)), while $\gamma_2$ and $\gamma_3$ account for the jump in $L$ and $I$ at $0$.
\item Whenever the occupation measure is associated to $\sigma=\sigma_{0-}^\pi\wedge t$ for $t>0$, the reader will note that the support of $\gamma_0$ (and similarly for the others, of course) satisfies $Supp\pr{\gamma_0}\subset \pp{0,t}\times \mathbb{R}$.
\item The identification of marginals for $\gamma_1$ in (ii)(b) together with the fact that the measures $\gamma_0$ do not have atoms at $0$ (again, provided that $t>0$) explains why we call the first measure $\gamma_0$.
\item  We have stated the conditions with $x^+$ instead of $x$ (although we have assumed that $x\geq 0$) for reasons that will appear obvious in Section \ref{Subsect2Stg}. The same applies to, \eqref{StrProperties} (iv).
\end{enumerate}
\end{remark}
\hspace{1.5em} From now on, in order to avoid lengthy notations, we will write $\displaystyle{\int e^{-qs_1}\pr{y_1\vee 0}d\gamma_0}$ instead of $\displaystyle{\int_{\mathbb{R}_+\times\mathbb{R}}e^{-qs_1}\pr{y_1\vee 0}\gamma_0\pr{ds_1dy_1} }$ and so on.
\begin{definition}
\label{Theta}
The set of constraints for initial positions $x\geq 0$, designated hereafter by $\Theta(x)$ is defined by
\begin{equation*}
\Theta(x)=\set{\begin{split}\gamma=\pr{\gamma_0,\gamma_1,\gamma_2,\gamma_3}:\
&\gamma_0\in\mathcal{P}\pr{\mathbb{R}_+\times\mathbb{R}};\ \gamma_1\in\mathcal{M}^+\pr{\mathbb{R}_+\times\mathbb{R}\times\mathbb{R}_+\times \mathbb{R}_+\times\mathcal{R}};\\ &\gamma_2,\gamma_3\in \mathcal{M}^+\pr{\mathbb{R}_+\times\mathbb{R}\times\mathbb{R}_+\times \mathbb{R}_+\times\mathcal{R}\times \mathbb{R}_+},\\
&\gamma \textnormal{ satisfies } \eqref{StrProperties}
\end{split}}.
\end{equation*}
For every $t\geq 0$, the set of all measures $\gamma\in\Theta(x)$ such that their first marginal(s) have support included in $\pp{0,t}\times \mathbb{R}$ will be designated by $\Theta_t(x)$.
\end{definition}
\subsection{The Duality Result}
\begin{theorem}\label{ThmDual}
The linearized function $\Lambda:\mathbb{R}_+\longrightarrow\mathbb{R}_+^*$ defined by\[\Lambda(x):=\sup_{\gamma\in\Theta(x)}\pr{\gamma_2-k\gamma_3}\pr{\mathbb{R}_+\times\mathbb{R}\times\mathbb{R}_+\times \mathbb{R}_+\times\mathcal{R}\times \mathbb{R}_+},\]coincides with $V$ on $\mathbb{R}_+$ and the common value is given by the dual value
\begin{equation}
\begin{split}
\label{Dual}
V(x)=\Lambda(x)=\Lambda^*(x):=&\inf\ \phi(x)\\ &\begin{split} s.t.\ &\phi\in C^1\pr{\mathbb{R};\mathbb{R}_+},\textnormal{ with linear growth, }\phi'\in\pp{1,k}\textnormal{ on }\left[0,\infty\right);\\
&\mathcal{L}^u\phi(y)\leq 0,\ \forall u\in\mathcal{R},\ \forall y\in\mathbb{R}_+.\end{split}
\end{split}
\end{equation}
Moreover, the functions in the description of $\Lambda^*$ can be taken with $k$-upper bounded derivative on $\mathbb{R}$ i.e. \begin{equation}\label{Dualk}
\begin{split}
\Lambda^*(x)=&\inf\ \phi(x)\\ &\begin{split} s.t.\ &\phi\in C^1\pr{\mathbb{R};\mathbb{R}_+},\ \phi'\leq k,\textnormal{ on }\mathbb{R},\ \phi'\geq 1,\textnormal{ on }\left[0,\infty\right);\\
&\mathcal{L}^u\phi(y)\leq 0,\ \forall u\in\mathcal{R},\ \forall y\in\mathbb{R}_+.\end{split}
\end{split}
\end{equation}
\end{theorem}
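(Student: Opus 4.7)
The plan is to establish the chain $V(x) \leq \Lambda(x) \leq \Lambda^*(x) \leq V(x)$, with both formulations of $\Lambda^*$ delivering the same infimum, using occupation measures for the lower bound, weak duality via the test-function identity \eqref{StrProperties}(v) for the middle inequality, and the smoothing result of Proposition \ref{PropSupersol} applied to $V$ itself to close the loop.

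For $V \leq \Lambda$, I would fix $\varepsilon \in (0, 1]$ and an $\varepsilon$-optimal $\pi = (u, L, I) \in \Pi(x)$, introduce the stopping time $\sigma := t \wedge \sigma_{0-}^{\pi}$, and construct $(\gamma_0, \gamma_1, \gamma_2, \gamma_3)$ exactly as in the definitions (i)--(iv) preceding Definition \ref{Theta}. That this quadruple lies in $\Theta(x)$ amounts to checking each item of \eqref{StrProperties}: items (i), (iii), (iv), and (ii)(c) are direct translations of the estimates in Proposition \ref{PropEstimILX}; (ii)(a) follows from $\int_0^\sigma e^{-qs_2}ds_2 \leq \tfrac{1}{q}$; (ii)(b) is the marginal identity $\int_0^{s_1} e^{-qs_2}ds_2 = \tfrac{1 - e^{-qs_1}}{q}$; and (v) is It\^o's identity \eqref{Ito} rewritten in terms of these measures. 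By construction, $(\gamma_2 - k\gamma_3)(\text{total}) = \mathbb{E}_x\bigl[\int_{[0, \sigma)} e^{-qs}(dL_s - k\, dI_s)\bigr]$, and passing $t \to \infty$ by dominated convergence (uniform bounds from Proposition \ref{PropEstimILX}) yields $v(x, \pi) \leq \Lambda(x)$, hence $V(x) \leq \Lambda(x)$ by $\varepsilon$-optimality.

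For $\Lambda \leq \Lambda^*$, I would take an admissible $\phi$ for \eqref{Dualk} (or \eqref{Dual}) and any $\gamma \in \Theta(x)$, and apply \eqref{StrProperties}(v) with test function $\phi$. Since $\phi \geq 0$, the left-hand side is non-negative; since the $y_2$-coordinates of $\gamma_1, \gamma_2, \gamma_3$ lie in $\mathbb{R}_+$ per Definition \ref{Theta}, the hypotheses on $\phi$ give $\int \mathcal L^u \phi\, d\gamma_1 \leq 0$, $\int \phi'\, d\gamma_2 \geq \gamma_2(\text{total})$, and $\int \phi'\, d\gamma_3 \leq k\gamma_3(\text{total})$. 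Rearranging yields $\gamma_2(\text{total}) - k\gamma_3(\text{total}) \leq \phi(x)$; optimizing over $\gamma$ and $\phi$ gives $\Lambda(x) \leq \Lambda^*(x)$ for both formulations. For the reverse $\Lambda^* \leq V$, I would apply Proposition \ref{PropSupersol} to $\phi := V$; the hypotheses hold by Proposition \ref{Vsupersol} and the linear bound $V(y) \leq y + \norm{p}_0/q$. The resulting approximants $V_n$ satisfy $V_n' \in [1, k]$ on $[x_0, \infty)$ with $x_0 = -V(0)/k \leq 0$ (so in particular on $[0, \infty)$), $V_n' \leq k$ on all of $\mathbb{R}$ by Remark \ref{dPsi}, $\mathcal L^u V_n \leq 0$ on $\mathbb{R}_+$, and $V_n(x) \leq V(x) + \tilde c/n$; therefore $V_n$ is admissible in \emph{both} \eqref{Dual} and \eqref{Dualk}, whence $\Lambda^*(x) \leq V_n(x) \to V(x)$ in both formulations and the chain closes.

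The main obstacle is the careful bookkeeping needed to translate \eqref{Ito} into identity (v) of \eqref{StrProperties}: one must correctly account for the decomposition into continuous and singular parts of $L$ and $I$ (using Remark \ref{RemSimultaneousIL} to rule out simultaneous jumps at pre-bankruptcy times) and check that the supports in the $y_2$-coordinate respect the typing of Definition \ref{Theta}, which is precisely what enables the weak-duality step to go through uniformly for both formulations; the remaining difficulty is purely the $t \to \infty$ dominated-convergence passage, which is entirely controlled by Proposition \ref{PropEstimILX}.
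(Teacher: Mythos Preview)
Your proposal is correct and follows essentially the same three-step chain $V\leq\Lambda\leq\Lambda^*\leq V$ as the paper's proof, using occupation measures for the first inequality, the test-function identity \eqref{StrProperties}(v) for weak duality, and Proposition~\ref{PropSupersol} (together with Remark~\ref{dPsi} for the $k$-bounded derivative on $\mathbb{R}$) to close the loop. You supply more detail than the paper on the first step (the $t\to\infty$ passage via Proposition~\ref{PropEstimILX}), but the argument and its ingredients are the same.
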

The reader is referred to the Appendix for the proof of this result.
\section{Two (and Multi)-stage Linear Decision Formulations and the Dual Programming Algorithm}\label{Section5}
\subsection{The Linearized Dynamic Programming Principle and A Two-Stage Decision Problem}\label{Sub5.1}
Let us take a look at the dynamic programming principle and provide some linearized versions taking into account the previous dual formulation(s). Prior to giving our main result, let us extend the functions $\Lambda$ and $\Lambda^*$ from $\mathbb{R}_+$ to the entire real axis $\mathbb{R}$ by setting\[\Lambda(x)=\Lambda^*(x)=\pr{\Lambda(0)+kx}^+,\ x<0.\]
\begin{corollary}\label{Cor}
For every $t>0$ and every $x\geq 0$, the value function $V$ can be retrieved as a two-stage problem
\begin{equation}\label{TwoStage}
\begin{split}
V(x)& =
\sup_{\gamma\in\Theta_t(x)}\set{\int \pr{d\gamma_2-kd\gamma_3}+\int e^{-qs_1}{\Lambda}^*\pr{y_1}d\gamma_0}\\
&=\sup_{\gamma\in\Theta_t(x)}\set{\int \pr{d\gamma_2-kd\gamma_3}+ \inf_{\phi \in \mathbf{F}}\  \int e^{-qs_1}\phi(y_1)d\gamma_0} ,
\end{split}
\end{equation}
where $\mathbf{F} := \set{\phi\in C^1\pr{\mathbb{R};\mathbb{R}_+} \big|\  \phi'\leq k,\textnormal{ on }\mathbb{R},\ \phi'\geq 1,\textnormal{ on }\left[0,\infty\right);\
\mathcal{L}^u\phi(y)\leq 0,\ \forall u\in\mathcal{R},\ \forall y\in\mathbb{R}_+}$.
\end{corollary}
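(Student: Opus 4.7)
The plan is to combine Theorem \ref{ThmDual} (yielding $V=\Lambda^*$ on $\mathbb{R}_+$, with extension $\Lambda^*(x)=(\Lambda(0)+kx)^+$ for $x<0$) with the integral identity \eqref{StrProperties}(v) and the standard dynamic programming principle for $V$ specialised to the stopping time $t\wedge\sigma_{0-}^\pi$. Writing $A(\gamma):=\int(d\gamma_2-k\,d\gamma_3)$, I will first prove the first equality of \eqref{TwoStage} by two opposite inequalities, and then derive the nested minimax form by rearranging the same two bounds.

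For the upper bound $V(x)\geq$ (right-hand side of the first line of \eqref{TwoStage}), I fix arbitrary $\gamma\in\Theta_t(x)$ and $\phi\in\mathbf{F}$. Using \eqref{StrProperties}(v) and rearranging,
\[\phi(x)=\int e^{-qs_1}\phi(y_1)\,d\gamma_0-\int\mathcal{L}^u\phi(y_2)\,d\gamma_1+\int\phi'(y_2)\,d\gamma_2-\int\phi'(y_2)\,d\gamma_3.\]
Since $\mathcal{L}^u\phi\leq 0$ on $\mathbb{R}_+$, since $\phi'\geq 1$ on the $y_2$-support of $\gamma_2$ (which lies in $[0,\infty)$, because \eqref{RestrDividends} together with Remark \ref{RemSimultaneousIL} forces every dividend payment to leave a non-negative intermediate state), and since $\phi'\leq k$ on all of $\mathbb{R}$ (crucial here: the $y_2$-variable under $\gamma_3$ can be negative, corresponding to capital injections bringing the reserve from below zero back to non-negative values), I obtain $\phi(x)\geq A(\gamma)+\int e^{-qs_1}\phi(y_1)\,d\gamma_0$. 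Next I would observe that every $\phi\in\mathbf{F}$ satisfies $\phi\geq V$ on the whole real line: on $\mathbb{R}_+$ this is Theorem \ref{ThmDual}; on $\mathbb{R}_-$, $\phi'\leq k$ and $\phi\geq 0$ yield $\phi(y)\geq(\phi(0)+ky)^+\geq(V(0)+ky)^+=V(y)$. Taking a supremum over $\gamma$ and then an infimum over $\phi$ produces $V(x)\geq\sup_\gamma\set{A(\gamma)+\int e^{-qs_1}V(y_1)\,d\gamma_0}$; keeping $\phi$ arbitrary and taking first the inner inf on the right and then the outer sup gives $V(x)\geq\sup_\gamma\set{A(\gamma)+\inf_\phi\int e^{-qs_1}\phi(y_1)\,d\gamma_0}$.

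For the reverse direction, I would invoke the standard dynamic programming principle for $V$ at the stopping time $t\wedge\sigma_{0-}^\pi$: for every admissible $\pi\in\Pi(x)$, the occupation measures $\gamma^\pi\in\Theta_t(x)$ built in Section \ref{Section4} satisfy by construction
\[A(\gamma^\pi)+\int e^{-qs_1}V(y_1)\,d\gamma_0^\pi=\mathbb{E}_x\pp{\int_0^{t\wedge\sigma_{0-}^\pi}e^{-qs}(dL_s-k\,dI_s)+e^{-q(t\wedge\sigma_{0-}^\pi)}V(X^\pi_{t\wedge\sigma_{0-}^\pi})},\]
so DPP yields $V(x)\leq$ (first expression of \eqref{TwoStage}). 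The second expression dominates the first by the elementary inequality $\inf_\phi\int\phi\,d\gamma_0\geq\int\inf_\phi\phi\,d\gamma_0=\int V\,d\gamma_0$ combined with $V=\inf_\phi\phi$ on $\mathbb{R}$, hence is also $\geq V(x)$.

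The main delicate points I expect will be (i) the support analysis of the $y_2$-marginals of $\gamma_2$ and $\gamma_3$ used to justify the pointwise pairing of $\phi'$ with $1$ on $[0,\infty)$ for dividends and with $k$ on all of $\mathbb{R}$ for injections, and (ii) the extension $\phi\geq V$ on $\mathbb{R}_-$, which is precisely what forces the passage from \eqref{Dual} to the strengthened dual \eqref{Dualk} in Theorem \ref{ThmDual}. Once these are in place, the DPP for $V$ at a deterministic time truncated by $\sigma_{0-}^\pi$ is standard from the variational characterisation in Proposition \ref{Vsupersol} and can be either cited or sketched directly from the definition of $V$.
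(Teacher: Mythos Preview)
Your proof is correct and follows essentially the same route as the paper: the inequality $V(x)\leq$ first line comes from the DPP (Proposition \ref{DPP}, not Proposition \ref{Vsupersol}) together with the fact that occupation measures of near-optimal policies lie in $\Theta_t(x)$; the inequality first line $\leq$ second line uses $\Lambda^*\leq\phi$ on all of $\mathbb{R}$; and second line $\leq V(x)$ follows from the constraint \eqref{StrProperties}(v), the sign and derivative bounds on $\phi\in\mathbf{F}$, and $\inf_{\phi\in\mathbf{F}}\phi(x)=V(x)$ (Theorem \ref{ThmDual}, equivalently Proposition \ref{PropSupersol}).

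One clarification on your ``delicate point (i)'': in the abstract constraint set $\Theta_t(x)$ (Definition \ref{Theta}) the measures $\gamma_1,\gamma_2,\gamma_3$ are defined on spaces where the $y_2$-coordinate already lives in $\mathbb{R}_+$, so no support analysis is needed and the bound $\phi'\in[1,k]$ on $[0,\infty)$ suffices for both $\gamma_2$ and $\gamma_3$; your remark that ``$y_2$ under $\gamma_3$ can be negative'' is a misconception about the abstract set (though harmless, since $\phi'\leq k$ on $\mathbb{R}$ certainly implies it on $\mathbb{R}_+$). Also, your phrase ``for every admissible $\pi\in\Pi(x)$'' should strictly be ``for every near-optimal $\pi$'', since the mass bounds in \eqref{StrProperties}(iii),(iv) defining $\Theta_t(x)$ rely on Proposition \ref{PropEstimILX}, which is stated for $1$-optimal policies; this is enough for the DPP inequality.
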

\begin{remark}
The reader will note that the first equality provides the classical dynamic programming principle (please also take a look at Proposition \ref{DPP} in the Appendix. The only particularity is the "linearized" formulation. The second equality, however, no longer asks for minimization at all points $y_1$ but includes a linear criterion involving part of $\gamma$.
\end{remark}
Again, for readability purposes, we postpone the proof to the Appendix.
\begin{remark}
The reader will easily note that the proof shows that both sets $\Theta_t (x)$ can be replaced with $\Theta (x)$.
\end{remark}

\subsection{Extension of the Set of Constraints. Coherence With the DPP}\label{Sub5.2}
As we have seen in the previous subsection, in order to provide a linear formulation of the dynamic programming principle, we needed to extend the value function to negative initial positions. In the same spirit, before proceeding with our two-stage algorithm, let us give a natural extension of the (constraint) sets of measures $\Theta_t$ to initial data $x$ that are negative.
\begin{enumerate}
\item For $x <-\frac{V(0)}{k}$,  it is optimal to declare bankruptcy, and, for such $x$, we set \begin{equation}\label{ThetaExt1}
\Theta_t (x) :=\set{
\begin{split}\gamma=\pr{\gamma_0 ,\gamma_1 ,\gamma_2 ,\gamma_3}:\ &\gamma_0\in\mathcal{P} \!\pr{\mathbb{R}_+ \!\times\mathbb{R}};\ \gamma_1\in\mathcal{M}^+\pr{\mathbb{R}_+ \!\times\mathbb{R} \!\times\mathbb{R}_+ \!\times \mathbb{R}_+ \!\times\mathcal{R}};\\ &\gamma_2,\gamma_3\in \mathcal{M}^+\pr{\mathbb{R}_+\times\mathbb{R}\times\mathbb{R}_+\times \mathbb{R}_+\times\mathcal{R}\times \mathbb{R}_+},
\\
&\gamma_0 = \delta_0 \otimes \delta_{x} , \gamma_1 =0,\ \gamma_2 =\gamma_3 =0    \end{split}}.
\end{equation}
It is easy to check that (\ref{StrProperties}) still holds true:
\begin{itemize}
\item (i) follows easily for the Dirac measure: $\displaystyle{\int e^{-qs_1}(y_1\vee 0)d\gamma_0=0}$;
\item (ii) (a) and (c), (iii) and (iv) are mass or moments bounds which are obvious for the zero measures;
\item (ii) (b) is true since $1-e^{-qs_1}=0,\ \gamma_0-a.s.$;
\item Finally, (v) reads $\displaystyle{\int_{\mathbb{R}_+\times\mathbb{R}} e^{-qs_1}\phi(y_1)\delta_{\pr{0,x}}(ds_1dy_1)=\phi(x) }$ which is trivial.
\end{itemize}
\item For $x \in \big[-\frac{V(0)}{k} ,0\big)$, one naturally defines
\begin{equation}\label{ThetaExt2}\begin{split}
&\delta^{\#}_{-x}\pr{dy_2}:=\int_{\mathbb{R}_+} \mathbf{1}_{ x+z\in dy_2,\ z\leq -x}dz;\\
&\Theta_t (x): = \set{\begin{split}\gamma =\pr{\gamma_0 ,\gamma_1 ,\gamma_2 ,\gamma_3}&:=\pr{\gamma'_0 ,\gamma'_1 ,\gamma'_2,\gamma'_3 +\delta_{(0,0,0)}\otimes \delta^{\#}_{-x}\otimes\delta_0\otimes\delta_{-x} } ,\\ &\text{where}\ \pr{\gamma'_0 ,\gamma'_1 ,\gamma'_2 ,\gamma'_3}\in\Theta_t (0)\end{split}}.\end{split}
\end{equation}
Then \eqref{StrProperties} holds true:
\begin{itemize}
\item (i), (ii), (iii) hold true for initial data $0$, hence for $x^+=0$;
\item To see (iv), one notes that \[\int 1d\gamma_3=\int 1d\gamma_3'+\int_0^{-x}dz\leq \frac{2\norm{p_0}+\pp{p}_1}{(k-1)\pp{p}_1}-x,\]due to the choice of $\gamma_3'$ and the bounds in $\Theta_t(0)$;
\item Finally, in order to prove (v), for $\phi\in C^1\pr{\mathbb{R};\mathbb{R}}$, one has
\begin{align*}
&\int e^{-qs_1}\phi(y_1)d\gamma_0=\int e^{-qs_1}\phi(y_1)d\gamma_0'\\
&=\phi(0)+\int\mathcal{L}^u\phi(y_2)d\gamma_1+\int\phi'(y_2)(d\gamma_3-d\gamma_2)-\int\phi'(y_2)\delta_{(0,0,0)}\otimes \delta^{\#}_{-x}\pr{dy_2}\otimes\delta_0\otimes\delta_{-x} \\
&=\int\mathcal{L}^u\phi(y_2)d\gamma_1+\int\phi'(y_2)(d\gamma_3-d\gamma_2)+\phi(0)-\int_0^{-x}\phi'(x+z)dz\\
&=\int\mathcal{L}^u\phi(y_2)d\gamma_1+\int\phi'(y_2)(d\gamma_3-d\gamma_2)+\phi ( x).
\end{align*}
\end{itemize}
\end{enumerate}
Using this, we get a simple extension of the formula (\ref{TwoStage}) to the entire real axis.
\begin{proposition}\label{PropNegx}

\begin{enumerate}
\item The equality (\ref{TwoStage}) is valid for all $x\!\in\!\mathbb{R}$, where $\Theta_t $ is extended using \eqref{ThetaExt1}, \eqref{ThetaExt2}.
\item Furthermore, the dual formulation \eqref{Dualk} holds true for $x<0$, i.e.
\begin{equation*}
\begin{split}
\Lambda(x)=\Lambda^*(x):=&\inf\ \phi(x)\\ &\begin{split} s.t.\ &\phi\in C^1\pr{\mathbb{R};\mathbb{R}_+},\ \phi'\leq k,\textnormal{ on }\mathbb{R},\ \phi'\geq 1,\textnormal{ on }\left[0,\infty\right);\\
&\mathcal{L}^u\phi(y)\leq 0,\ \forall u\in\mathcal{R},\ \forall y\in\mathbb{R}_+.\end{split}
\end{split}
\end{equation*}
\end{enumerate}
\end{proposition}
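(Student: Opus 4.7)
The statement has two parts, and my plan is to handle them sequentially, using the $x=0$ results from Corollary \ref{Cor} and Theorem \ref{ThmDual} as the engine, plus the explicit form $V(x)=(V(0)+kx)^+$ for $x\le 0$ obtained in Proposition \ref{Vsupersol}.

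For part 1, I will split on the sign of $V(0)+kx$. If $x<-V(0)/k$, then by \eqref{ThetaExt1} the set $\Theta_t(x)$ is the singleton with $\gamma_0=\delta_{(0,x)}$ and $\gamma_1=\gamma_2=\gamma_3=0$. Plugging this into the right-hand side of \eqref{TwoStage} collapses the expression to $\Lambda^\ast(x)=(V(0)+kx)^+=0$, which coincides with $V(x)$. If $x\in[-V(0)/k,0)$, I will use the parametrisation $\gamma=(\gamma'_0,\gamma'_1,\gamma'_2,\gamma'_3+\delta_{(0,0,0)}\otimes\delta^{\#}_{-x}\otimes\delta_0\otimes\delta_{-x})$ from \eqref{ThetaExt2}. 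A routine computation (using that $\delta^{\#}_{-x}$ has total mass $-x$) gives, for every $\gamma'\in\Theta_t(0)$,
\[
\int d\gamma_2-k\int d\gamma_3+\int e^{-qs_1}\Lambda^\ast(y_1)\,d\gamma_0 \;=\; \int d\gamma'_2-k\int d\gamma'_3+\int e^{-qs_1}\Lambda^\ast(y_1)\,d\gamma'_0+kx.
\]
Taking the supremum over $\gamma'\in\Theta_t(0)$ and invoking Corollary \ref{Cor} at $x=0$ gives $V(0)+kx$, which equals $V(x)=(V(0)+kx)^+$ in this range.

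For part 2, I will prove the two inequalities separately. Upper bound $\Lambda^\ast(x)\le V(x)$: apply Proposition \ref{PropSupersol} to $\phi=V$ to obtain the $C^1$ family $\phi_n$. Each $\phi_n$ lies in the admissible class $\mathbf{F}$ of \eqref{Dualk} on $\mathbb{R}$, and by Remark \ref{dPsi} one has $\phi'_n=k$ on $[-V(0)/k,-2/n]$, with $\phi_n(x)\to V(x)$ pointwise for every fixed $x<0$. Hence $\Lambda^\ast(x)\le\liminf_n\phi_n(x)=V(x)$. Lower bound $\Lambda^\ast(x)\ge V(x)$: for any $\phi\in\mathbf{F}$, non-negativity gives $\phi(x)\ge 0$, while $\phi'\le k$ on $\mathbb{R}$ yields $\phi(x)\ge\phi(0)+kx$; combined with $\phi(0)\ge V(0)$ (Theorem \ref{ThmDual} at $x=0$), this gives $\phi(x)\ge(V(0)+kx)^+=V(x)$. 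Taking infimum over $\phi$ closes the gap.

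The main obstacle I anticipate is checking the compatibility of the approximating family $\phi_n$ with the dual constraints \emph{on the entire real axis}, rather than only on $\mathbb{R}_+$ where the constraint $\mathcal{L}^u\phi\le 0$ is imposed. This is precisely the content of Remark \ref{dPsi}: the construction in Proposition \ref{PropSupersol} can be arranged so that $\phi'_n\le k$ holds globally and $\phi'_n=k$ on the relevant sub-interval of $\mathbb{R}_-$, which is exactly what is needed to force $\phi_n(x)\to V(x)$ from above on $[-V(0)/k,0)$ and to keep $\phi_n(x)\to 0$ for $x<-V(0)/k$. The book-keeping for the added singular mass in part 1 is, in contrast, a direct verification.
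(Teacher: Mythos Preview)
Your proposal is correct and follows essentially the same approach as the paper: identical case splits for Part 1, and the same reliance on Proposition \ref{PropSupersol} together with Remark \ref{dPsi} to supply the approximants $\phi_n\in\mathbf{F}$ for Part 2. The only cosmetic difference is that the paper phrases the upper bound in Part 2 as a contradiction argument, whereas you argue directly via $\phi_n(x)\to V(x)$; both routes use the same ingredients.
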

The explicit proof is relegated to the Appendix.\\
As a by-product of the first assertion, we get the following multi-stage linear problem (written here for two time steps $t_1$ and $t_2$ but generalizable to a sequence of such steps).
\begin{corollary}\label{CorollaryMultiStage}
Let $\set{t_i:\ 1\leq i\leq 2}\subset \mathbb{R}_+$. Then, for every $x\in\mathbb{R}$, it holds \begin{align*}
&\Lambda^*(x)\\
=&\sup_{\gamma^1\in\Theta_{t_1}(x)}\set{\int\pr{d\gamma^1_2-kd\gamma^1_3}
+\int\sup_{\gamma^2\in\Theta_{t_2}(y_1^1)}\set{\int\pr{d\gamma^2_2-kd\gamma^2_3}+\int e^{-qs_1^2}\Lambda^*(y_1^2)d\gamma^2_0}e^{-qs_1^1}d\gamma^1_0}.
\end{align*}
\end{corollary}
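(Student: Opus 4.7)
The plan is to iterate the one-step linearised dynamic programming identity from Proposition~\ref{PropNegx}(1), applying it first at horizon $t_1$ and then again, inside the outer integral against $\gamma^1_0$, at horizon $t_2$.

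First, I invoke Proposition~\ref{PropNegx}(1) at $t=t_1$ to write, for every $x\in\mathbb{R}$,
\[
\Lambda^*(x)=\sup_{\gamma^1\in\Theta_{t_1}(x)}\left\{\int (d\gamma^1_2-k\,d\gamma^1_3)+\int e^{-q s^1_1}\,\Lambda^*(y^1_1)\,d\gamma^1_0\right\}.
\]
Because this identity is valid at every real initial reserve, I apply it a second time, now with horizon $t_2$ and initial state $y^1_1\in\mathbb{R}$, to get
\[
\Lambda^*(y^1_1)=\sup_{\gamma^2\in\Theta_{t_2}(y^1_1)}\left\{\int (d\gamma^2_2-k\,d\gamma^2_3)+\int e^{-q s^2_1}\,\Lambda^*(y^2_1)\,d\gamma^2_0\right\}.
\]
Substituting this expression for $\Lambda^*(y^1_1)$, pointwise in $y^1_1$, into the previous formula yields precisely the announced two-stage identity.

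The only point requiring verification is that the substitution is legitimate, that is, that the map
\[
y^1_1\longmapsto \sup_{\gamma^2\in\Theta_{t_2}(y^1_1)}\left\{\int (d\gamma^2_2-k\,d\gamma^2_3)+\int e^{-q s^2_1}\,\Lambda^*(y^2_1)\,d\gamma^2_0\right\}
\]
is Borel measurable and integrable against $\gamma^1_0$. But by the one-step identity this map coincides with $\Lambda^*$ itself, which is continuous on $\mathbb{R}$: on $\mathbb{R}_+$ it is $k$-Lipschitz with linear growth by Proposition~\ref{PropBasicPropertiesXV}(5)--(6), and on $\mathbb{R}_-$ it is given explicitly by $\Lambda^*(y)=(\Lambda(0)+ky)^+$. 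Integrability against $\gamma^1_0$ then follows from $\gamma^1_0\in\mathcal{P}(\mathbb{R}_+\times\mathbb{R})$ together with the first moment bound \eqref{StrProperties}(i) (the negative part of $y^1_1$ is controlled by the linear bound of $\Lambda^*$ on $\mathbb{R}_-$).

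I do not anticipate a genuine obstacle here: the statement is a direct iteration of Proposition~\ref{PropNegx}(1), whose nontrivial content (the extension of \eqref{TwoStage} to negative initial reserves via the prescriptions \eqref{ThetaExt1}--\eqref{ThetaExt2}) has already been established. The same scheme, carried out $n$ times, yields the analogous $n$-stage formula for any finite sequence $t_1,\dots,t_n\in\mathbb{R}_+$, and this is what will be used in the backward step of the DDP algorithm.
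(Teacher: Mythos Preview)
Your proof is correct and follows exactly the approach the paper intends: the corollary is stated as an immediate ``by-product of the first assertion'' of Proposition~\ref{PropNegx}, i.e., a direct two-fold iteration of the one-step identity \eqref{TwoStage} extended to all $x\in\mathbb{R}$. Your added measurability/integrability check (noting that the inner supremum coincides with the continuous, linearly growing function $\Lambda^*$) is a welcome clarification that the paper leaves implicit.
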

With $\scal{\cdot,\cdot}$ denoting the duality product between the space of measures and continuous functions,  the previous relation can be seen as a linear infinite-dimension problem.

\subsection{The Two-Stage Algorithm}\label{Subsect2Stg}\label{Sub5.3}
Next, with the help of the above considerations, we will give the two-stage algorithm to solve the optimal control problem (see Corollary \ref{CorollaryMultiStage}). The algorithm includes two parts: forward simulation and backward recursion.
\subsubsection{Space Considerations and some techniques in algorithm}
\begin{enumerate}
%\item In the following arguments, we let $z\in\mathbb{N}^*$ denote the iteration step;
\item ({\em Space reduction and pseudo-compactness}) For reasons that will appear clear in the next subsection, we consider the measures to be set on some compact space i.e. to be supported by\[(s_1,y_1,y_2,i,l)\in\pp{0,T}\times \pp{-\frac{\norm{p}_0}{k\pp{p}_1},x^{\max}}\times \pp{0,x^{\max}}\times \pp{0,i^{\max}}\times \pp{0,l^{\max}}.\]
\begin{enumerate}
\item We consider the optimization problem for initial capital $x\leq \bar{x}$.
\item The dynamic programming principle (cf. \eqref{TwoStage}) and the multi-stage one (cf. Corollary \ref{CorollaryMultiStage}) will be applied with $t\leq T<\infty$.
\item A quick look at Proposition \ref{PropEstimILX}, assertion 3. shows that, up to a fixed error $\varepsilon>0$, \[\sup_{x\leq \bar{x}}\mathbb{P}_x\pr{\sup_{t\leq T}X^\pi_t\geq x^{\max,\varepsilon}}\leq e^{qT}\frac{\bar{x}+\frac{(k+1)\norm{p}_0+\pp{p}_1}{(k-1)\pp{p}_1}}{x^{\max,\varepsilon}}\leq \varepsilon,\]with the obvious choice $x^{\max,\varepsilon}:=\frac{\bar{x}+\frac{(k+1)\norm{p}_0+\pp{p}_1}{(k-1)\pp{p}_1}}{\varepsilon e^{-qT}}$. In particular, one can work with measures supported by $y_1\in\pp{\frac{-V(0)}{k},x^{\max,\varepsilon}}\subset\pp{-\frac{\norm{p}_0}{k\pp{p}_1},x^{\max,\varepsilon}},\ y_2\in \pp{0,x^{\max,\varepsilon}}$ and control the loss.
\item One essentially expects the injection of capital to intervene as a rescue measure, usually as the reserve becomes negative and appearing as impulsive feedback, see \cite{avram2020equity}. This instantaneous injection should not exceed the upper-bound for the expected reward. Under this assumption,  and using Proposition \ref{PropBasicPropertiesXV}, assertion 5.,  the measure $\gamma_3$ is supported on \[i\leq \frac{V(0)}{k}+x^{\max,\varepsilon}\leq i^{\max,\varepsilon}:=x^{\max,\varepsilon}+\frac{\norm{p}_0}{k\pp{p}_1}.\]
\item Similar arguments apply to dividends: instantaneous dividends cannot exceed the reserve (hence $x^{\max,\varepsilon}$) while continuous dividends are usually related to keeping the reserve at some level (thus a fraction of the premium). It is natural to consider \[ l\leq l^{\max,\varepsilon}:=\pp{p}_1x^{\max,\varepsilon}+\norm{p}_0.\]
\end{enumerate}
\item Standard arguments (cf.  \cite[Chapter 1]{llavona1986approximation}) allow to replace continuous test functions in $\Theta_t(x)$ by combinations of monomials (via Nachbin's Theorem) of degree up to some fixed positive integer;
\end{enumerate}
$\blacktriangleright$ ({\em SOS technique}) The test functions appearing in $\sup\inf$ formulation \eqref{TwoStage} satisfy $\phi\geq 0$ on the associated Euclidean space.  Hence, we will use the long-established Sum-of Squares method (SOS for short).
\begin{enumerate}
\item The notation $\sum_{2r} [y]$ stands for the polynomials of degree not exceeding $2r$ in $y\in\mathbb{R}^N$, for some positive integer $N$ and obtained as sum of squares. A polynomial $p(y) \in \sum_{2r} [y]$ if there exists a finite family of polynomials $\xi_1 (y) ,\cdots,\xi_n (y)\in \mathbb{R}_r [y]$ such that $p(y)=\sum_{i=1}^{n} \xi_i (y)^2$.
\item Asking that a polynomial $p\in\sum_{2r} [y]$ is equivalent to the existence of a symmetric positive semi-definite matrix $\mathbf{M}$ (denoted by $\mathbf{M}\succeq 0$) such that $p(y)=\mathbf{X}^T \mathbf{M} \mathbf{X}$ where, written as a $\alpha$-indexed vector, \[\mathbf{X} = \pr{\underset{1\leq i\leq N}{\Pi}y_i^{\alpha_i}:\ \alpha_i\in\mathbb{N},\norm{\alpha}:=\sum_{1\leq i\leq N}\alpha_i\leq r}.\]This leads to an optimization problem with choices $\mathbf{M}$ satisfying the constraint expressed as linear matrix inequalities (LMI) and roughly taking the form $\mathbf{M} \succeq 0$.
\end{enumerate}
$\blacktriangleright$ ({\em truncated quadratic module}) For a given semi-algebraic set $\mathbf{S}:= \set{y\in\mathbb{R}^N\ | \ h_{i} (y) \geq 0,\ i\in I }$ where $h_i$, $i\in I$, are polynomials, the truncated quadratic module of degree $r$ of this set is defined as
\[  Q_r (\mathbf{S}) := \sigma_0^2 (y) +\sum_{i\in I} \sigma_i^2 (y) h_i (y)   , \]
where $\sigma_0^2 (y),\ \sigma_i^2 (y) \in \sum_{2r} [y]$, with the restriction that  $\text{deg} \pr{\sigma_i^2 h_i} \leq 2r$. Such polynomials are obviously non-negative for all $y\in\mathbf{S}$ and this method will be used as slacks in the algorithm. \\
$\blacktriangleright$ ({\em LMI-relaxation}) Next, we will recall a method named LMI (linear matrix inequality)-relaxations which will be used in the following algorithm (readers are referred to \cite{lasserre2005nonlinear} and \cite{henrion2021moment} for more details).
\begin{enumerate}[label= (\alph*)]
\item Define $\mathcal{D}_{r}^{y} \subset \mathbb{R} [t,y]$ (resp. $\mathcal{D}_{r}^{t}$) be the monomials of the canonical basis of the space of polynomials in the variables $y$ (resp. $t$) and of total degree less than $r$. In practice, it is required that the support of the problem $\mathbf{Y} := \set{y\in\mathbb{R}^N\  | \ \theta_j (y) \geq 0,\ j\in J}$ be a compact semi-algebraic subset of $\mathbb{R}^N$, for some finite index set $J$ and polynomials $\theta_j (y)$. Assume $\mu$ is a measure supported on $\pp{0,T} \times\mathbf{Y}$ ($\mu^t$ supported on $\pp{0,T}$; $\mu^y$ supported on $\mathbf{Y}$ ) and $\displaystyle{m_{\alpha\beta}:=\int t^{\beta}y^{\alpha} d\mu }$, for $\norm{\alpha}+\beta \leq 2r$ ($\displaystyle{m^t_{\beta}:=\int t^{\beta} d\mu^t }$; $\displaystyle{m^y_{\alpha}:=\int y^{\alpha} d\mu^y}$ ) is moment of the measure $\mu$ ($\mu^t$; $\mu^y$). We emphasize that $\alpha=\pr{\alpha_1,\ldots, \alpha_N}$ is an integer-valued multi-index, we denote by $y^\alpha:=y_1^{\alpha_1}\ldots y_N^{\alpha_N}$, and, again,  by $\norm{\alpha}:=\underset{1\leq i\leq N}{\sum}\alpha_i$.
Moreover, $\mathbf{m}$, $\mathbf{m}^t$ and $\mathbf{m}^y$ are vectors of the corresponding moments $m_{\alpha\beta}$, $m^t_{\beta}$ and $m^y_{\alpha}$.
\item  From Putinar's Positivstellensatz (\cite{putinar1993positive} Theorem 3.3 ), there exists a sufficient condition to guarantee that the moments $m_{\alpha\beta}$ have a representing measure $\mu$ supported on $[0,T]\times\mathbf{Y}$ i.e.
\begin{align*}&L_{\mathbf{m}} (h^2) \geq 0,\ \forall h\in \mathbb{R}_r [t,y];  \ L_{\mathbf{m}^y} (\theta_j h^2) \geq 0, \forall h\in \mathcal{D}_{r-\lceil \frac{\text{deg}\  \theta_j}{2} \rceil}^{y}; \\
 &L_{\mathbf{m}^t} (t(T-t) h^2) \geq 0, \ \forall h\in \mathcal{D}_{r-1}^{t} ; \ y\in\mathbb{R}^N.\end{align*}
Here, $L_{\mathbf{m}} (h^2):= \int h^2 (t,y) d\mu = \int \sum_{\norm{\alpha}+\beta\leq 2r} h_{\alpha \beta} t^{\beta} y^{\alpha} d\mu  =\sum_{\norm{\alpha}+\beta\leq 2r} h_{\alpha \beta} m_{\alpha\beta}$
is a linear functional on $\mathbb{R} [t,y]$. Similar definitions hold for $L_{\mathbf{m}^y}$ and $L_{\mathbf{m}^t}$) and $h_{\alpha\beta}$ denotes the coefficient of function $h^2$.
\item   Next, we will use moment matrix defined in (\cite{lasserre2008nonlinear} Lasserre et al. 2008) to further deal with the conditions in (b). The detailed definition of moment matrices $M_{r} \pr{\mathbf{m}}$ will be presented in the Appendix. Let $\mathbf{h}$ denote the vector of coefficients of the polynomial $h$ of degree less than $r$, then
\begin{equation*}
\begin{split}
 &\scal{\mathbf{h} , M_{r} \pr{\mathbf{m}} \mathbf{h}} = L_{\mathbf{m}} (h^2) =\int h^2 d\mu \geq 0,\ \forall h\in\mathbb{R} [t,y]; \\
 & \scal{\mathbf{h} , M_{r-\lceil \frac{\text{deg}\theta_j }{2} \rceil} \pr{\theta_j \mathbf{m}^y} \mathbf{h}} = L_{\mathbf{m}^y} (\theta_j h^2) =\int \theta_j h^2 d\mu^y \geq 0,\ \forall h\in \mathcal{D}_{r}^{y},\ \forall j\in J;  \\
 & \scal{\mathbf{h} , M_{r-1} \pr{t(T-t) \mathbf{m}^t} \mathbf{h}} = L_{\mathbf{m}^t } (t(T-t) h^2) =\int t(T-t) h^2 d\mu^t \geq 0,\ \forall h\in \mathcal{D}_{r}^{t}. \\
\end{split}
\end{equation*}
Then, the above inequalities imply that the localizing matrices $M_{r} \pr{\mathbf{m}}$, $M_{r-\lceil\frac{\text{deg}\theta_j}{2}\rceil} \pr{\theta_j \mathbf{m}^y }$ and $M_{r-1} \pr{t(T-t) \mathbf{m}^t}$ ) are symmetric non-negative matrices, i.e. for every r,
$ M_{r} \pr{\mathbf{m}} \succeq 0$, $\ M_{r-\lceil \frac{\text{deg}\theta_j}{2}\rceil} \pr{\theta_j\mathbf{m}^y} \succeq 0$ and $ M_{r-1} \pr{t(T-t) \mathbf{m}^t }\succeq 0$.
\end{enumerate}

\subsubsection{Notations for moments}
In this section, to avoid the complexity of notations, we use $\mathbf{Y}$ to represent the compact set in which the $y$ position is located, and specify the constructions as if the dimension $N=1$. Of course, while this is valid for the marginal measures $\int_{\pp{0,T}}e^{-qt}\gamma_0(dt,dy)$ (concerned only with the occupation of the space at bankruptcy), they are a bit lengthier for the other measures. The reader is invited to refer to the aforementioned space considerations to understand what $\mathbf{Y}$ stands for in the case of measures $\gamma_i,\ i\in\set{1,2,3}$.

%\hspace{1.5em} Here, $z$ denotes the $z$-th iteration of algorithm; $\gamma^1_{i,z}$ and $\gamma^2_{i,z}$, $i=0,1,2,3$, denote the candidate measures in $\Theta_{t_1} (x)$ and $\Theta_{t_2} \pr{y_1^1}$ respectively at $z$-th iteration of algorithm.
\begin{enumerate}
\item ({\em $\bar{\mathbb{Y}}_{0}$: moments of $\bar{\gamma}_{0}$})
%As we all know, $e^{-qt}$ has an explicit Taylor expansion, i.e. $e^{-qt}=\sum_{i} \frac{(-q)^{i}}{i!} t^i $.
For a given order $r\in\mathbb{N}^*$, $\phi (y)\in C^1 \pr{\mathbf{Y} ;\mathbb{R}_+}$ can be approximated by the following polynomial
\begin{equation}\label{poly}
\varphi (y) =  \sum_{\beta \leq 2r}  C_{\beta}   y^{\beta} , \quad  \text{where}\ m, \beta \in\mathbb{N}, \  C_{\beta} \in \mathbb{R} .
\end{equation}

We define a (discounted) version of $\gamma_0$, i.e. $\bar{\gamma}_0 (dt,dy) := e^{-qt} \gamma_0 (dt,dy)$ and define the following moment of $\bar{\gamma}_{0}$,
%$j=1,2$ stands for the moments in the $j$-th stage problem,
\[  \bar{Y}_{0}^{ \beta } := \int_{\mathbf{Y}} y^{\beta} \bar{\gamma}_0 ([0,T], dy) = \int_{[0,T]\times\mathbf{Y}} e^{-qt} {y}^{\beta} \gamma_{0} (dt, dy) ,\  \text{with} \ \beta \leq 2r  , \]
and $\bar{\mathbb{Y}}_{0}$ is the $\beta$-indexed vector of the above moments.
Thus,
\begin{equation*}
\int_{[0,T]\times\mathbf{Y}}  e^{-qt} \phi (y) d\gamma_{0} =  \int_{[0,T]\times\mathbf{Y}}   \sum_{\beta \leq 2r}  C_{\beta}  {y }^{\beta} e^{-qt} d\gamma_{0} = \sum_{\beta \leq 2r}  C_{\beta} \bar{Y}_{0}^{\beta }
=: L_{\bar{\mathbb{Y}}_{0}} \pr{\phi (y)}.
\end{equation*}
Time moments are defined similarly and so are mixed time and space moments with $2$-dimensional multi-indexes.
%Here, the linear functional $L_{\mathbb{Y}_{0,z}^j}$ works on polynomials, but one may notice that $e^{-qs}$ is not a polynomial, we need to point out that what the functional $L_{\mathbb{Y}_{0,z}^j}$ deal with is the polynomial expansion of $e^{-qs}$ and we still use $e^{-qs}$ in the bracket for simplicity of description and same notation in the following algorithm.
\item ({\em $\mathbb{Y}_{i}$: moments of $\gamma_{i}$, $i=1,2,3$}) We define the following moments of the measures ($\gamma_0$, $\gamma_{1}$, $\gamma_{2}$, $\gamma_{3}$), for which the time is well present compared with $\bar{\mathbb{Y}}_{0}$,
\[   Y_{i}^{\beta} := \int_{\mathbf{Y}}  y^{\beta} d\gamma_{i}, \ \text{for}\ i=1,2,3;\ \norm{\beta}\leq 2r,  \]
and $\mathbb{Y}_{i}$ denotes the vector of corresponding moment. For instance, in the case of $i=1$, one has a dimension $N=5$ and the set $\mathbf{Y}:=\pp{0,T}\times \pp{-\frac{\norm{p}_0}{k\pp{p}_1},x^{\max}}\times\pp{0,T}\times\pp{0,x^{\max}}\times \bar{R}$. The $y$ variable stands for $\pr{s_1,y_1,s_2,y_2,u}$. To render $\bar{R}$ convenient, please take a look at Example \ref{ExpProp}. Then, $\beta\in \mathbb{N}^5$ is a $5$-dimensional multi-index.
Thus, for a given polynomial of type \eqref{poly} (this time, $\beta\leq 2r$ has to be read as $\norm{\beta}\leq 2r$), one has
\begin{equation*}
 \int_{\mathbf{Y}}   \phi (y) \gamma_{j} (dy) = \sum_{\norm{\beta} \leq 2r}  C_{\beta} Y_{j}^{\beta }
=: L_{\mathbb{Y}_{j}} \pr{ \phi (y)},\ j=0,1,2,3.
\end{equation*}
Of course, the spaces $\mathbf{Y}$ should be different for indexes $j$, but they can be embedded into a larger, universal one.
Of particular importance are polynomials $\phi$ that only depend on the $y_2$ component.
\item ({\em $\mathbb{D}$: moment sof Dirac measure}) The moments of Dirac measure are defined as $D_x^{\alpha} :=\int  y^{\alpha} \delta_x (dy). $
 Then, $\mathbb{D}_x $ is the corresponding vector of moments. The definition of linear operators $L_{\mathbb{D}_x}$ is similar to the one of $L_{\mathbb{Y}_{j}}$, with $j\in\set{0,1,2,3}$.
\end{enumerate}

\begin{remark}The reader will keep in mind that although we talk about $\mathbf{Y}$, it is not just a one-dimensional setting.  As a matter of fact, we can consider polynomials $\varphi\in\mathbb{R}\pp{s_1,y_1,y_2,u,l,i}$. Let us further explain Putinar's result \cite[Remark after Eq. (16)]{putinar1993positive} and the meaning it has for our constraints.
\begin{enumerate}
\item Consistency of $\gamma_0$: $y_1\in\pp{-\frac{\norm{p}_0}{k\pp{p}_1},x^{\max,\varepsilon}}$, $t\in\pp{0,T}$ such that one imposes \begin{equation}
\label{Consist0}
L_{\bar{\mathbb{Y}}_0} (\varphi^2)\geq 0,\ L_{\bar{\mathbb{Y}}_0}\pr{\pr{y_1+\frac{\norm{p}_0}{k\pp{p_1}}}\pr{x^{\max,\varepsilon}-y_1} \varphi^2}\geq 0, \ L_{\bar{\mathbb{Y}}_0}\pr{t(T-t)\varphi^2}\geq 0.
\end{equation}Moreover, $M_r \pr{\bar{\mathbb{Y}}_0}$ stands for the moment matrix of moment $\bar{\mathbb{Y}}_0$ with respect to $\bar{\gamma}_0$.
\item Let us explain the consistency of $\gamma_2$ (same to $\gamma_3$), or, to be more precise, of the marginal $\bar \gamma_2:=\int_{\mathbb{R}_+}\gamma_2\pr{\cdot,\cdot,ds_2,\cdot,\cdot}$ since the behaviour of $s_2$ is of little interest. One would have
 \begin{equation}
\label{Consist2}
\begin{cases}
&L_{\mathbb{Y}_2}(\varphi^2)\geq 0,\ L_{\mathbb{Y}_2}\pr{\pr{y_1+\frac{\norm{p}_0}{k\pp{p_1}}}\pr{x^{\max,\varepsilon}-y_1}\varphi^2}\geq 0, \ L_{\mathbb{Y}_2}\pr{t(T-t)\varphi^2}\geq 0,\\
&L_{\mathbb{Y}_2}\pr{y_2\pr{x^{\max,\varepsilon}-y_2}\varphi^2}\geq 0,\ L_{\mathbb{Y}_2}\pr{l\pr{l^{\max,\varepsilon}-l}\varphi^2}\geq 0.
\end{cases}
\end{equation}
In this case, $\mathbb{Y}_2$ is constructed from a multi-index $\alpha=\pr{\alpha_i}_{1\leq i\leq 4}\subset \mathbb{N}^4$ and defining its length as $\norm{\alpha}:=\sum_{1\leq i\leq 4}\alpha_i$.  For such index, one sets $\displaystyle{Y_2^{\alpha}:=\int {s_1}^{\alpha_1}y_1^{\alpha_2}y_2^{\alpha_3}l^{\alpha_4}d\gamma_2}$.

\begin{comment}
Furthermore, we will give a general definition of the moment matrix for this multi-index case.  The matrix has its rows and columns indexed in the canonical basis $\set{s_1^{\alpha_1} y_1^{\alpha_2} y_2^{\alpha_3} l^{\alpha_4}}$ and is defined by
\begin{equation}
\begin{split}
&M_r (\mathbb{Y}_2) \pr{\alpha ,\alpha'} := \mathbb{Y}_2^{\alpha+\alpha'},\ \text{with}\  \alpha,\alpha'\in\mathbb{N}^4 ,\ \abs{\alpha},\abs{\alpha'} \leq r ; \\
&M_r (\varphi\mathbb{Y}_2) \pr{\alpha ,\alpha'} := \sum \varphi_{\delta}\mathbb{Y}_2^{\delta+\alpha+\alpha'},\ \text{with}\  \delta, \alpha,\alpha'\in\mathbb{N}^4 ,\ \abs{\alpha},\abs{\alpha'} \leq r,\ \varphi\in \mathbb{R} \pp{s_1 ,y_1 ,y_2, l} .
\end{split}
\end{equation}
Consequently, $M_r \pr{\mathbb{Y}_2}$ is a $\binom{r+4}{r} \times \binom{r+4}{r}$ dimensional matrix.
\end{comment}
\end{enumerate}
\end{remark}

\subsubsection{The Forward Step}
In the remaining of the paper, we will use $\mathbb{Y}_{j,z}$ to denote the moment with respect to the occupation measure $\gamma_{j,z}$, $j\in \set{0,1,2,3}$,  where $z\in \mathbb{N}$ denotes the $z$-th iteration of the algorithm.

We need to point out that we do not explicitly mention \eqref{StrProperties}, i-iv and similar conditions in the constraints below for space reasons. This will be taken into account in the actual optimization procedure. With this convention, the forward step appears as
\begin{equation}\label{forward1}
\begin{split}
F_{z} &= \max_{\pp{\mathbb{Y}_{j,z}}_{j=0}^{3}} \set{ L_{\mathbb{Y}_{2,z}} (1) -kL_{\mathbb{Y}_{3,z}} (1) +L_{\bar{\mathbb{Y}}_{0,z}} \pr{ \hat{\phi}_{z-1} (y_1)}} ,\\
&\quad s.t.\  L_{\mathbb{D}_x } (y^{\alpha}) + L_{\mathbb{Y}_{1,z}} \pr{\mathcal{L}^u (y_2^{\alpha})} +\alpha L_{\mathbb{Y}_{3,z} -\mathbb{Y}_{2,z}} \pr{y_2^{\alpha -1}} =L_{\bar{\mathbb{Y}}_{0,z}} \pr{y_1^{\alpha}} \\
&\qquad\qquad \text{with\ } \max \set{ \text{deg}\pr{p^u (y)} +\alpha-1 ,\ \alpha}\leq 2r ,\ u\in\mathcal{R},\\
&\qquad\ \  M_r \!\pr{\mathbb{Y}_{j,z}} \!\succeq \! 0,
\ \!M_{r-1} \!\pr{s_1 (t_1 \!-\!s_1) \mathbb{Y}_{j,z} (s_1)} \!\succeq \! 0,\\
 &\qquad\ \ M_{r-1} \! \pr{ \pr{x^{\max}-y_1}\pr{y_1+\frac{\norm{p}_0}{k\pp{p}_1}} \mathbb{Y}_{j,z} (y_1)} \!\succeq \! 0,\  j\!=\!0,\! 1,\! 2, \!3, \\
&\qquad\ \ M_{r-1} \pr{ l(l^{max}-l) \mathbb{Y}_{2,z} (l) } \succeq 0,\  M_{r-1} \pr{ i(i^{max}-i) \mathbb{Y}_{3,z} (i) } \succeq 0, \\
&\qquad\ \  M_{r-1} \pr{\pr{x^{\max}-y_2}y_2\mathbb{Y}_{j,z} (y_2) } \succeq 0,\ j\!=1,2,3, \\
\end{split}
\end{equation}
Here, $ L_{\bar{\mathbb{Y}}_{0,z}} \pr{\hat{\phi}_{z-1} (y_1)} $ is the cutting plane of $\displaystyle{\int e^{-qs_1} \Lambda^* \pr{y_1} d\gamma_{0,z} }$ obtained from its dual formulation where $\hat{\phi}_{z-1}$ is the optimal function in the previous backward step.

Our attentive reader has certainly noticed that some of the constraints (those referring to the integro-differential operator $\mathcal{L}^u$, resp., $p^u$) might fail to be polynomial.  However, the approximation of these terms strongly relies on the type of policies envisaged. Let us focus on the proportional case and we give an example to explain.
\begin{example}\label{ExpProp}
The usual distribution $F$ of claims is an exponential law, i.e. $F(d\tilde{y} )= \kappa e^{-\kappa \tilde{y}} \mathbf{1}_{\mathbb{R}_+}(\tilde y)d\tilde{y}$. We give an SOS polynomial approximation of $p\pr{u(\hat{y}),y}$, i.e. $\displaystyle{p\pr{u(\hat{y}),y} := \sum_{a+b \leq 2r'}  C_{ab} \pr{u(\hat{y})}^{a} y^b}$,\footnote{The coefficients $C_{ab}$ are subject to additional LMI constraints not stated explicitly here.} for some order $r'
\in \mathbb{N}$ Then, in proportional reinsurance case:
\begin{equation*}
\begin{split}
p^u (y) &=\int_{\mathbb{R}_+} p\pr{ u(\hat{y}) ,y} F(d\hat{y}) =\int_{\mathbb{R}_+} \sum_{a+b \leq 2r'} C_{ab} \pr{u(1)\hat{y}}^a y^b \kappa e^{-\kappa \hat{y}} d\hat{y}     \\
& =\sum_{a+b \leq 2r'} C_{ab}\  y^b \pr{u(1)}^a \int_{\mathbb{R}_+} \hat{y}^a e^{-\kappa \hat{y}} d\hat{y} = \sum_{a+b \leq 2r'} C_{ab} \frac{\pr{u(1)^a}}{\kappa^{a}} \Gamma (a+1) y^b  , \\
\end{split}
\end{equation*}
where $\Gamma$ is Gamma function; and
\begin{equation*}
\begin{split}
\int_{\mathbb{R}_+} \!\! \pr{y-\tilde{y}}^{\alpha} dF^u (\tilde{y}) =\!\!\int_{\mathbb{R}_+}\! \pr{y-u(\tilde{y})}^{\alpha} \kappa e^{-\kappa \tilde{y}} d\tilde{y} =\!\!\int_{\mathbb{R}_+} \!\pr{y-u(1)\tilde{y}}^{\alpha} \kappa e^{-\kappa \tilde{y}} d\tilde{y} =\sum_{i=0}^{\alpha} (-1)^i \frac{A_{\alpha}^{i} \pr{u(1)}^i}{\kappa^{i}} y^{\alpha -i},
\end{split}
\end{equation*}
where $A_{\alpha}^{i}:=\frac{\alpha !}{i!\pr{\alpha-i}!}$.  As a consequence,
\begin{align*}
\mathcal{L}^u x^\alpha=&p^u(x)\alpha x^{\alpha-1}+\lambda\int_{\mathbb{R}_+}\pr{x-y}^\alpha dF^{u}(y)-\pr{\lambda+q}x^\alpha\\
\approx &\sum_{a+b \leq 2r'} C_{ab} \frac{\alpha\pr{u(1)}^a}{\kappa^{a}} \Gamma (a+1) x^{b+\alpha-1}+\lambda \sum_{a=1}^{\alpha} (-1)^a \frac{A_{\alpha}^{a} \pr{u(1)}^a}{\kappa^{a}} y^{\alpha -a}-q x^\alpha
\end{align*}gives, a $2r'+r-1$-degree polynomial approximation of the infinitesimal generator as claimed (where the test functions have degree $\alpha\leq r$).  Furthermore, in this case, the optimization is sought over the real parameter $u(1)\in\pp{0,1}$.
\end{example}

Since the polynomial forms of $p^u (y)$ and the integral of $dF^u$ depend on the choice of reinsurance policy $u$, we will still use them to represent the polynomial form later for the convenience of presentation. Therefore, our first-stage forward problem is a purely polynomial form of moments over occupation measures.

The key to solving the forward problem in real programming lies in the realization of the occupation measures. Even though there is a way to directly generate a measure by using GloptiPoly 3 which is a package dealing with simple occupation measure, our framework is obviously much more complicated than the one in Lasserre's \cite{henrion2009gloptipoly}, \cite{lasserre2005nonlinear} (see the following Remark \ref{diracgamma0}). In this paper, we will use the method of Dirac sampling to realize our occupation measure.

For example, to obtain $\gamma_{0,z}$ in our first stage problem of forward step, we will firstly discretize the compact spaces they are in (i.e. $\pp{0,t_1}$ and $\pp{x^{min}:=\frac{\norm{p}_0}{k\pp{p}_1} ,x^{max}}$). Then, we will use Dirac function to take out the sample that falls in the grids generated by the fixed discretization. If we have $n$ sets of data (simulate the dynamic process (\ref{Surpluspi}) n times), we use Dirac sampling $n$ times, and then calculate the frequency of sample in each grid to get an approximate distribution which is our constructed $\gamma_{0,z}$. The other measures can be constructed in similar method but for the processes $L$, $I$.

\begin{remark}\label{diracgamma0}
First of all, $s_1$ can be skipped. Secondly,  if we are working on $\pp{0,t_1}$, then the discounted measure $e^{-qs_1}\gamma_{0,z}(ds_1,dy_1)$ is
\begin{itemize}
\item either $e^{-qt_1}\gamma_{0,z}(dy_1)\delta_{t_1}(ds_1)$, with $\gamma_{0,z}$ supported by $\pp{0,x^{\max}}$ . Then,  one samples $\pp{0,x^{\max}}$ by setting $\set{x_n:=\frac{nx^{\max}}{N}:\ 0\leq n\leq N}$ and define \[\Delta_1:=\set{e^{-qt_1}\delta_{ x_n  }(dy_1)\delta_{t_1}(ds_1),0\leq n\leq N}.\]
\item or $\gamma_{0,z}(\pp{0,t_1},dy_1)=\delta_{\set{-\frac{V(0)}{k}}}(dy_1)$ i.e. bankruptcy is declared when injection is no longer profitable. Of course, this is a short-cut as bankruptcy may intervene for more important losses. However, the value function is null prior to and at this point which can be regarded as a "cemetery" state when the trajectories and values get frozen.  Since $V(0)\in \pp{\frac{\norm{p}_0}{\lambda+q},\frac{\norm{p}_0}{q}}$,  one can sample, as before,  $x_n'$ over $\pp{-\frac{\norm{p}_0}{kq},-\frac{\norm{p}_0}{k(\lambda+q)}}$ and $t^n_1$ over $\pp{0,t_1}$ and get a set \[\Delta_2:=\set{e^{-qt_i^1}\delta_{\pr{t^i_1,x_j'}}(ds_1,dy_1):\ 1\leq i,j\leq N}. \] Then $e^{-qs_1}\gamma_{0,z}(ds_1,dy_1)$ is well approximated by convex combinations over $\Delta:=\Delta_1\cup\Delta_2$.
\item for the second case, one should also envisage plainly using $\delta_{-\frac{\hat \phi_{z-1}(0)}{k}}$ as $y_1$-marginal, whenever $\phi_{z-1}(0)$ provides a guess that will be clear hereafter.
\end{itemize}
In other words, having fixed $N$, generating $\gamma_{0}$ comes to generating some $(N+1)^2$ coefficients $\alpha_{i,j}\geq 0,\ \sum_{i,j}\alpha_{i,j}=1$.
%\footnote{The coefficients we will choose are $\alpha_{i,j} = \frac{c_i \times c_j}{N^2}$, where $c_i ,c_j \in \set{0, 1, \cdots N}$ are parameters that need to be optimized in algorithm.}
\end{remark}

Then, we solve, among the candidates generated, this forward problem and get the optimal moments $\widehat{\mathbb{Y}}_{j,z}$, $j=0,1,2,3$. Actually, if we get these moments, then we will act as if we got the corresponding optimal occupation measures $\hat{\gamma}_{i,z}$, $i=0,1,2,3$.  Moreover, if the measure $\hat{\gamma}_{0,z}$ is fixed, the time and dynamic process term $\pr{s_1 , y_1}$ can be obtained in the sense of distributions.
The lower bound of this iteration is $F_{LB,z} =L_{\widehat{\mathbb{Y}}_{2,z}} (1) -kL_{\widehat{\mathbb{Y}}_{3,z}} (1) + \displaystyle{\int} e^{-qs_1 } F_{z} (y_1) d\hat{\gamma}_{0,z}$.

\subsubsection{Prepared illustration of dual formulation }
Let us take a look into the changes operated by the previous considerations.  To this purpose, let us consider the dual formulation of the value function at some point $y_1\in \mathbb{R}$ (cf.  Theorem \ref{ThmDual}, Eq. \eqref{Dualk}; see also Proposition \ref{PropNegx} for $y_1<0$).
\begin{equation*}
\begin{split}
\Lambda^*(y_1)=&\inf\ \phi(y_1)\\ &\begin{split} s.t.\ &\phi\in C^1\pr{\mathbb{R};\mathbb{R}_+},\ \phi'\leq k,\textnormal{ on }\mathbb{R},\ \phi'\geq 1,\textnormal{ on }\left[0,\infty\right);\\
&\mathcal{L}^u\phi(y)\leq 0,\ \forall u\in\mathcal{R},\ \forall y\in\mathbb{R}_+.\end{split}
\end{split}
\end{equation*}
\begin{enumerate}
\item {\em The function $\phi$ and the bounds on derivatives.}
Our problem allows one to consider the functions $\phi$ restricted to the (compact semi-algebraic) domain of $y^1$ i.e. $\pp{x^{min}:=-\frac{\norm{p}_0}{k\pp{p}_1},x^{max}}$.
Having fixed $\varepsilon>0$,  the function $\Lambda^*$ is approximated by polynomials $\varphi$ (with an adequate degree $2r$ depending on $\varepsilon$).
%\vspace{-0.7em}
\item Here, we give some explanations why the polynomial $\varphi (y) \in \sum_{2r} [y]$ approximating test functions $\phi (y)$ depend on $\varepsilon$. First of all, we assume that the polynomials $\varphi$ give a $C^1$-approximation of the smooth functions $\phi (x)$.
\begin{enumerate}

\item Given an acceptable error $\varepsilon >0$, $\sup_{y\in [ x^{min} ,x^{max}  ] } \abs{\pr{\phi' -\varphi' } (y)} \leq \varepsilon$,  $\varphi (0) =\phi (0) \geq \frac{\norm{p}_0}{\lambda +q}$, then one gets the derivative $\varphi'$ should satisfy the constraint $\varphi' \in \pp{1-\varepsilon ,k+\varepsilon}$ because $\phi' \in\pp{1,k}$. On the negative semi-axis, $\varphi'=k$ is a good choice (but than one has piecewise polynomials).
\item  Obviously, $ \abs{\phi (y) -\varphi (y)}  \leq \varepsilon \abs{y} $. For $\mathcal{L}^u \phi (y) \leq 0$, $y\in \pp{0, x^{max}} $, and $p^u (y) \leq \pp{p}_1 y+ \norm{p}_0 $, one has, for $y\in  \pp{0, x^{max}}$,
\begin{equation*}
\begin{split}
&\mathcal{L}^{u} \varphi (y)  \\
\leq &\mathcal{L}^u \phi (y) \!+\!p^u (y) \abs{\varphi' (y) \!-\! \phi' (y) }\! +\!\lambda \!\! \int_{0}^{\infty} \!\abs{\varphi (y\!-\!y') \!-\! \phi (y\!-\! y')}\! dF^u (y')\! +\! \pr{\lambda +q} \abs{\varphi (y) \!-\!\phi (y)}  \\
\leq &  \pr{ \pp{p}_1 y+\norm{p}_0  } \varepsilon +\! \lambda\varepsilon  \int_{0}^{\infty} \!\abs{y-u(y')} dF (y')  + \pr{\lambda \!+\!q} y\varepsilon
\leq  \big( \pr{  \pp{p}_1 +\! 2\lambda +q } x^{\max} +\! \norm{p}_0 \!\big) \varepsilon. \\
\end{split}
\end{equation*}
We have assumed here that $x^{\max}\leq \mathbb{E}\pp{C}$, in order not to complicate the notations. This is achievable for $\varepsilon>0$ small enough, by the definition of $x^{\max,\varepsilon}$.
\end{enumerate}
In order to ensure that polynomials $\varphi$ satisfy the necessary condition $\mathcal{L}^u (\varphi) \leq 0$, $y\in \pp{x^{min} ,x^{max}}$, we can define polynomials $\varphi^{\varepsilon} (y) =\varphi (y) +C\varepsilon  $ \footnote{ The polynomial $\varphi^{\varepsilon} (y)$ can be constructed by adding constant $C\varepsilon$ to the first constant term of $\varphi (y)$ that the power of $y$ is zero and not changing the rest of the terms of $\varphi (y) $.    }, where $C=\frac{\pr{  \pp{p}_1 + 2\lambda +q } x^{max} +\norm{p}_0 }{q}$, then one has $\mathcal{L}^u \varphi^{\varepsilon} (y) \leq 0$, for $y\in \pp{0, x^{max}} $.

\item For the convenience of the presentation, we write the approximating polynomial depending on $\varepsilon$ in form of  $\varphi (y):=\sum_{\beta_z \leq 2r} C_{\beta_z} y^{\beta_z}$, $\beta_z\in\mathbb{N} $, $ C_{\beta_z} $\footnote{The coefficient should satisfy the linear matrix inequalities introduced in SOS technique, cf. Section 5.3.1.} $ \in \mathbb{R}$ and the coefficient $C_0$ depend on $\varepsilon$.
\end{enumerate}

From now on, again for simplicity, we will use $\phi$ to denote the polynomial version exhibited before.

Because the dual problem will be solved as LP (linear programming) problem over finite  moments of occupation measures, we need to take the "inf" out of the integral in dual formulation. Hence, we introduce the following lemma.

\begin{lemma}\label{infchange}
From (\ref{Dual}) and Proposition \ref{PropNegx}, one has the following equality:
\[ \int e^{-qs_1} \Lambda^*(y_1) d\gamma_{0,z} = \inf_{\phi\in\mathbf{F}}\  \int e^{-qs_1}\phi(y_1)d\gamma_{0,z} , \]
and we recall that $\mathbf{F}\!=\!\set{\! \phi \!\in \! C^1 \!\pr{\mathbb{R};\mathbb{R}_+\!}\big| \phi'\leq k,\textnormal{on }\mathbb{R},\ \!\phi'\geq 1,\textnormal{on }\!\left[0,\infty\right);\mathcal{L}^u\phi(y)\!\leq \! 0,\ \forall u\in\!\mathcal{R},\ y\!\in\!\mathbb{R}_+}$.
\end{lemma}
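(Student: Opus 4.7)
The plan is to prove the two inequalities separately; the easy direction comes from the pointwise duality, and the harder direction requires an explicit approximation of $\Lambda^*$ by elements of $\mathbf{F}$.

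For the inequality $\int e^{-qs_1}\Lambda^*(y_1)d\gamma_{0,z} \leq \inf_{\phi \in \mathbf{F}} \int e^{-qs_1}\phi(y_1)d\gamma_{0,z}$, I would invoke \eqref{Dualk} together with Proposition \ref{PropNegx}(2), which tell us that $\Lambda^*(y_1) = \inf_{\phi \in \mathbf{F}}\phi(y_1)$ holds pointwise for every $y_1 \in \mathbb{R}$. Consequently $\Lambda^*(y_1) \leq \phi(y_1)$ for any fixed $\phi \in \mathbf{F}$, and integrating against the positive measure $e^{-qs_1}\gamma_{0,z}(ds_1 dy_1)$ (which is well-defined since $\gamma_{0,z} \in \mathcal{P}(\mathbb{R}_+ \times \mathbb{R})$) preserves this inequality. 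Taking the infimum over $\phi \in \mathbf{F}$ yields the claimed bound. A brief check of integrability is needed: $\Lambda^* = V$ on $\mathbb{R}_+$ has at-most-linear growth by Proposition \ref{PropBasicPropertiesXV}(5) and equals $(V(0)+kx)^+$ on $\mathbb{R}_-$, so $|\Lambda^*(y_1)| \leq C(1 + (y_1 \vee 0))$, which is $\gamma_{0,z}$-integrable owing to \eqref{StrProperties}(i).

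For the reverse inequality, I would produce an explicit sequence $(\phi_n) \subset \mathbf{F}$ converging to $\Lambda^*$ in a sufficiently strong sense. This is where Proposition \ref{PropSupersol} (and Remark \ref{dPsi}) come into play: apply it with $\phi := V$, which by Proposition \ref{Vsupersol} is a non-negative absolutely continuous super-solution of \eqref{HJB} with $V(0) > 0$ and linear growth $V(y) \leq y + \|p\|_0/q$ on $\mathbb{R}_+$. The proposition then supplies $C^1$ functions $\phi_n$ with $V \leq \phi_n \leq V + \tilde c/n$ on $\mathbb{R}$, with $1 \leq \phi_n' \leq k$ on $[x_0,\infty) \supset [0,\infty)$, with $\phi_n' \leq k$ on all of $\mathbb{R}$ (by Remark \ref{dPsi}(1)), and with $\mathcal{L}^u\phi_n(y) \leq 0$ for $y \geq 0$. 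In particular, each $\phi_n$ lies in $\mathbf{F}$.

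Since $V = \Lambda^*$ on $\mathbb{R}_+$ and the extensions to $\mathbb{R}_-$ coincide by construction, we obtain
\begin{equation*}
0 \leq \int e^{-qs_1}\bigl(\phi_n(y_1) - \Lambda^*(y_1)\bigr) d\gamma_{0,z} \leq \frac{\tilde c}{n}\int e^{-qs_1} d\gamma_{0,z} \leq \frac{\tilde c}{n},
\end{equation*}
where the last bound uses $e^{-qs_1} \leq 1$ and $\gamma_{0,z}(\mathbb{R}_+\times\mathbb{R}) = 1$. Letting $n \to \infty$ shows $\inf_{\phi \in \mathbf{F}} \int e^{-qs_1}\phi(y_1)d\gamma_{0,z} \leq \int e^{-qs_1}\Lambda^*(y_1)d\gamma_{0,z}$, which combined with the first step completes the proof.

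The main subtlety, and the only real content, is verifying that the approximating sequence from Proposition \ref{PropSupersol} genuinely lies in $\mathbf{F}$: the bound $\phi_n' \leq k$ must hold on all of $\mathbb{R}$ (not merely on $[x_0,\infty)$), for which Remark \ref{dPsi}(1) is essential. Once this is secured, the rest is a one-line dominated convergence argument made trivial by the uniform approximation rate $\tilde c/n$ together with the probability-measure property of $\gamma_{0,z}$.
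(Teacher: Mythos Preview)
Your proposal is correct and follows essentially the same approach as the paper's proof: both directions are handled identically, with the easy inequality coming from the pointwise bound $\Lambda^*(y_1)\leq\phi(y_1)$ and the reverse inequality obtained by plugging the approximants $\phi_n$ from Proposition~\ref{PropSupersol} (together with Remark~\ref{dPsi}) into the integral and using $\gamma_{0,z}\in\mathcal{P}(\mathbb{R}_+\times\mathbb{R})$ to control the $\tilde c/n$ error. Your additional remarks on integrability and on the necessity of Remark~\ref{dPsi}(1) for membership in $\mathbf{F}$ are welcome clarifications but do not alter the strategy.
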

Again, the details are provided in the Appendix.

\subsubsection{The Backward Step}
Next, we will give the backward recursion step.  Then, the dual formulation of this problem becomes an LP problem (cf.  Subsections 5.3.2 and 5.3.4): for a given admissible error $\varepsilon >0$,
\begin{equation}\label{back1}
\begin{split}
B_{z}=& \inf_{\phi} \int e^{-qs_1} \phi_z (y_1) d\hat{\gamma}_{0,z} = \min_{ \beta ,C_{\beta}} \sum_{\beta \leq 2r}  C_{\beta} \widehat{\bar{Y}}_{0,z}^{\beta}, \\
 s.t. \ &\sum_{\beta \leq 2r} \beta C_{\beta} y^{\beta -1} \geq 1-\varepsilon,\ \forall y \in \pp{0 ,x^{max}} ;\
\sum_{\beta \leq 2r} \beta C_{\beta} y^{\beta -1} \leq k+\varepsilon,\ \forall y \in \pp{x^{min} ,x^{max}}, \\
 &\sum_{\beta \leq 2r} C_{\beta} \set{ \beta p^{u} (y) y^{\beta -1}  + \lambda \int_{\mathbb{R}_{+}} (y-\tilde{y})^{\beta} dF^{u} (\tilde{y}) - (\lambda +q) y^{\beta}     } \leq 0,\  \forall y\in\mathbb{R}_+,\ u\in \mathcal{R}, \\
 & M_{r} \!\pr{\widehat{\bar{\mathbb{Y}}}_{0,z}} \!\succeq \! 0 , \ M_{r-1} \pr{s_1 (t_1 \!-\!s_1) \widehat{\bar{\mathbb{Y}}}_{0,z} (s_1)} \!\succeq\! 0, \ M_{r-1} \pr{ \theta_1 \widehat{\bar{\mathbb{Y}}}_{0,z}(y_1)} \!\succeq \! 0, \\
\end{split}
\end{equation}
where $\hat{\gamma}_{0,z}$ is the optimal measure obtained from the forward step and $\phi_z$ stands for the objective function of $z$-th backward step with measure $\hat{\gamma}_{0,z}$.

Moreover, in the real implementation, one can use another version of constraints which is with slacks. Here, we will use the method of truncated quadratic module (cf. section 5.3.1) and define the needed compact semi-algebraic sets $\mathbf{Y}_{z}:=\set{y\in\mathbb{R} |\ \theta_1 (y):=(y-x^{min})(x^{max} -y) \geq 0,\ y\varpropto \gamma_{0,z}}$ and $\mathbf{Y}_{z}^+ :=\set{y\in\mathbb{R} |\ \theta_2 (y):= y(x^{max} -y) \geq 0,\ y\varpropto \gamma_{0,z}}$. For a given occupation measure $\gamma_{0,z}$, the symbol ``$y\varpropto \gamma_{0,z}$'' means that $y$ take values of the specific associated $x_k$ from which the measure $\gamma_{0,z}$ has been sampled.

%, one can divide the interval $\pp{x^{min} ,x^{max}}$ and $\pp{0,T}$ into n parts evenly, i.e. $ x^{min} =x_0 <x_1 <x_2<\cdots <x_n =x^{max}$, $0=t_0 < t_1<\cdots <t_n =T$, where $x_i = \frac{i}{n} \pr{x^{max} -x^{min}}$ and $t_i =\frac{i}{n} T$, for $i=0,1,\cdots ,n$. Then, run the dynamic equation $(\ref{Surpluspi} )$ N times with the control $\pi =\pr{u,L,I}$ corresponding to $\gamma_{0,z}$ and count the number of $\pr{\sigma_{0-}^{\pi} ,X_{\sigma_{0-}^{\pi}}^{\pi} }$ falling on the grid $\Delta t_k \times \Delta x_k := [t_k ,t_{k+1}) \times [x_k ,x_{k+1}) $, denoted as $N_k$.\\
%Thus, the objective function in backward algorithm with a given occupation measure $\gamma_{0,z}$ will become
%\begin{equation}
%\begin{split}
%\int e^{-qs_1} \phi_z (y_1) d\gamma_{0,z} &= \sum_{k=0}^{n-1} e^{-qt_k} \phi_z (x_k) \frac{N_k}{N} =\sum_{k=0}^{n-1} \pr{   \sum_{\beta \leq 2r} C_{\beta} (x_k)^{\beta}
%e^{-qt_k}  \frac{N_k}{N}      }   \\
%&=\sum_{\beta \leq 2r} C_{\beta}  \pr{ \sum_{k=0}^{n-1}  (x_k)^{\beta} e^{-qt_k} \frac{N_k}{N}   }  =\sum_{\beta \leq 2r} C_{\beta} \bar{Y}_{0,z}^{\beta},
%\end{split}
%\end{equation}
%and one can notice that the moment $\bar{Y}_{0,z}^{\beta}$ is actually $\pr{\sum_{k=0}^{n-1} e^{-qt_k} x_k^{\beta} \frac{N_k}{N}} $ here.

Therefore, the improved version of our backward problem with slacks is
\begin{equation}\label{Back-Dual 1}
\begin{split}
B_{z}=& \min_{\beta ,C_{\beta}} \sum_{\beta\leq2r }  C_{\beta} \widehat{\bar{Y}}_{0,z}^{ \beta}, \\
s.t. \ &\sum_{\beta \leq 2r} \beta C_{\beta} y^{\beta -1} -(1-\varepsilon )  = Q_r (\mathbf{Y}_{z}^+); \
\ k+\varepsilon - \sum_{\beta \leq 2r} \beta C_{\beta} y^{\beta -1} = Q_r (\mathbf{Y}_{z}),  \\
&\sum_{\beta \leq 2r} C_{\beta}  \set{- \beta p^{u} (y) y^{\beta -1}  - \lambda \int_{\mathbb{R}_{+}} (y-\tilde{y})^{\beta} dF^{u} (\tilde{y}) + (\lambda +q) y^{\beta}     }  = Q_r (\mathbf{Y}_{z}^+),\ \ u\in \mathcal{R}, \\
&\qquad \text{with}\ \max \set{\beta -1 +\text{deg}(p^u (y)) ,\ \beta }\leq 2r, \\
& M_{r} \pr{\widehat{\bar{\mathbb{Y}}}_{0,z}} \succeq 0 , \ M_{r-1} \pr{s_1 (t_1 -s_1) \widehat{\bar{\mathbb{Y}}}_{0,z} (s_1)} \succeq 0, \ M_{r-1 } \pr{ \theta_1 \widehat{\bar{\mathbb{Y}}}_{0,z} (y_1)} \succeq 0,  \\
\end{split}
\end{equation}
where the forth line of constraint conditions is to maintain the consistency with the order of $Q_r (\cdot)$. Then, we solve this LP problem, find the optimal coefficients $\hat{\beta}$, $\hat{C}_{\beta}$ i.e. we get the corresponding optimal function $\hat{\phi}_z (y) = \sum \hat{C}_{\beta} y^{\hat{\beta}} $ and it will be used in the newt forward step as a cutting plane, i.e. $L_{\bar{\mathbb{Y}}_{0,z+1}} \pr{ \hat{\phi}_{z} (y_1)}$.
By solving this dual problem, one will get the upper bound $B_{UB ,z} :=L_{\hat{\mathbb{Y}}_{z}} (1) -k L_{\hat{\mathbb{Y}}_{3,z}} (1) + B_{z}$.\\
Finally, let us give our complete two-stage algorithm.

%\begin{table}[h]
%\caption{Algorithm}
\begin{tabular}{l}
\hline
\hline
\textbf{Algorithm: Two-Stage DDP}
\\ [0.5ex]
\hline
\textbf{Input:}  Functions $F$, $p$, initial value $x$, policy $u$, tolerance $\varepsilon$, intensity $\lambda$, order $r$.\\
\textbf{Output:} Upper bound $B_{UB,z}$, lower bound $F_{LB,z}$, measure $\gamma_{z}$. \\ [0.5ex]
\hline
Step (a): $z \leftarrow 1$; Initialize $\hat{\phi}_{0} =0$.\\
Step (b): \textbf{Forward simulation:} \\
\qquad\qquad\qquad  Solve (\ref{forward1}) to obtain and store the optimal occupation measures $\hat{\gamma}_{z}$; \\
\qquad\qquad\qquad  Calculate the lower bound $F_{LB,z} $. \\
Step (c): \textbf{Backward recursion:} \\
\qquad\qquad\qquad  Solve (\ref{Back-Dual 1}) to get the dual function $\hat{\phi}_z$. \\
\qquad\qquad\qquad Calculate the upper bound $B_{UB ,z} $. \\
Step (d): If $B_{UB,z} -F_{LB ,z} < \varepsilon $, \\
\qquad\qquad\qquad Then Stop; \\
\qquad\qquad\qquad Else $z \leftarrow z+1$; go to Step (b). \\
\hline
\hline
\end{tabular}
%\end{table}
%\end{center}
{
\begin{remark}
\begin{enumerate}
\item From a theoretical point of view,\begin{enumerate}
\item the linear programming acts as a compactification method. In particular, under rather general lower semi-continuity (l.s.c.) assumptions on the cost, the optimal measure will exist even when the strict control $\pi^*=\pr{u^*,L^*,I^*}$ might not. This is the case without any further convexity assumptions;
\item the dual dynamic programming can be generalized to mere measurable costs (see \cite{GoreacIvascu_2013}), where the classical dynamic programming might fail to work.
\end{enumerate}
\item From an algorithmic point of view,
\begin{enumerate}
\item this offers a non-trivial tool going beyond the case $k=\infty$ to which the approach in \cite{azcue2014stochastic} can be associated;
\item because of the requirement of mere l.s.c.  for the backward candidate, this algorithm can naturally be generalized to several steps. This is essential in order to compute the optimal measure in addition to the optimal value.
\end{enumerate}
\end{enumerate}
\end{remark}
}

\section{Value Function with a Penalty for Bankruptcy}\label{Section6}
 In this section, we will consider the value function with a penalty for bankruptcy of linear type, i.e.
\[ V(x):=\sup_{\pi\in\Pi(x)}v\pr{x,\pi},\ \forall x\in\mathbb{R}  .\]
The cost functional is
\begin{equation}
 v\pr{x, \pi} = \mathbb{E}_x\pp{\int_0^{\sigma_{0-}^{\pi}}e^{-qt}\pr{dL_s-kdI_s} +e^{-q\sigma_{0-}^{\pi}} (-a +bX_{\sigma_{0-}^{\pi}}^{\pi})  } ,
\end{equation}
where $a,b\geq 0$ and $e^{-q\sigma_{0-}^{\pi}} (-a +bX_{\sigma_{0-}^{\pi}}^{\pi}) $ is the penalty. for declaring bankruptcy. We need to give one more assumption on $a,\ b$ appearing in the cost functional:
\begin{equation*}
\pp{\mathbf{A_{a,b}}}:\qquad\qquad\qquad\qquad\qquad \frac{\norm{p}_0}{\lambda} -a -b\mathbb{E} \pp{C} \geq 0. \qquad\qquad\qquad\qquad\qquad\qquad\qquad\qquad\qquad\qquad
\end{equation*}
Now, we recall the surplus process
\begin{equation*}
X_t^\pi:=x+\int_0^t p^{u_s}\pr{X_s^\pi}ds-\sum_{i=1}^{N_t}u_{\tau_i}\pr{C_i}-L_t+I_t;
\end{equation*}
and next, we will give some basic properties in this case which are kinds of different from the Proposition \ref{PropBasicPropertiesXV} given before.
\begin{proposition}
\label{estimatepenalty1}
\begin{enumerate}
\item  For any $\pi\in\Pi(x)$, the following holds true $V(x)\geq x+ \frac{\lambda}{\lambda +q} \pr{   \frac{\norm{p}_0 }{\lambda} -a -b \mathbb{E}[C]  }$.
\item If $q\geq\pp{p}_1$, then, for every $\pi\in\Pi(x)$, the associated gain satisfies $v(x,\pi)\leq x+\frac{\norm{p}_0}{q} \leq x+\frac{\norm{p}_0}{[p]_1}$.
\item For every $\varepsilon>0$ small enough,  one has $V(x+\varepsilon)-V(x)\in\pp{0,k\varepsilon}$.
\end{enumerate}
\end{proposition}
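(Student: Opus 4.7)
The plan is to adapt the proofs of Proposition \ref{PropBasicPropertiesXV} with the minimal bookkeeping required to accommodate the extra penalty term $e^{-q\sigma_{0-}^\pi}\pr{-a+bX_{\sigma_{0-}^\pi}^\pi}$ in the cost. Each of the three items responds to a different issue (constructing a good strategy, controlling the sign of the penalty, building infinitesimal variations).

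\textbf{Item 1.} I would produce an explicit admissible strategy whose expected gain equals the right-hand side. The natural candidate is the ``pay-everything-and-wait-for-the-first-claim'' policy already used in Proposition \ref{PropBasicPropertiesXV}-(4): pay $\Delta L_0=x$, use full retention $u^0(y)=y$, no injection, and continuously distribute the premium $dL_s^c=p^{u^0}\pr{X_s}ds$ on $(0,\tau_1)$. Between $0^+$ and the first claim the reserve vanishes, hence $dL_s^c=\norm{p}_0\,ds$; at $\tau_1$ the reserve jumps to $-C_1<0$, triggering bankruptcy with $\sigma_{0-}^\pi=\tau_1$ and $X_{\sigma_{0-}^\pi}^\pi=-C_1$. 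Using $\tau_1\sim \text{Exp}(\lambda)$, independent of $C_1$, the identities $\mathbb{E}\pp{e^{-q\tau_1}}=\lambda/(\lambda+q)$ and $\mathbb{E}\pp{\int_0^{\tau_1}e^{-qs}ds}=1/(\lambda+q)$ collapse the expected gain to $x+\norm{p}_0/(\lambda+q)-\lambda\pr{a+b\mathbb{E}\pp{C}}/(\lambda+q)$, which after factoring $\lambda/(\lambda+q)$ is exactly the stated bound. Assumption $\pp{\mathbf{A}_{a,b}}$ is only used a posteriori to ensure the bound is no smaller than $x$.

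\textbf{Item 2.} The key observation is that at bankruptcy $X_{\sigma_{0-}^\pi}^\pi<0$ by the very definition of $\sigma_{0-}^\pi$. Combined with $a,b\geq 0$, this forces the penalty $-a+bX_{\sigma_{0-}^\pi}^\pi\leq 0$, so $v\pr{x,\pi}$ is dominated by the purely dividend--injection cost of Section \ref{Section2}, to which Proposition \ref{PropBasicPropertiesXV}-(5) applies verbatim when $q\geq\pp{p}_1$. No new computation is required here.

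\textbf{Item 3.} I would reproduce the tempering argument of Proposition \ref{PropBasicPropertiesXV}-(6). Given $\pi=(u,L,I)\in\Pi(x)$, define $\pi'\in\Pi(x+\varepsilon)$ by adding $\varepsilon$ to $\Delta L_0$; the admissibility constraint \eqref{RestrDividends} is preserved because $x\geq 0$, the surplus trajectories coincide for all $t>0$, and in particular $\sigma_{0-}^{\pi'}=\sigma_{0-}^\pi$ and $X_{\sigma_{0-}^{\pi'}}^{\pi'}=X_{\sigma_{0-}^\pi}^\pi$. Thus the two penalty terms agree and $v(x+\varepsilon,\pi')=v(x,\pi)+\varepsilon$, which upon passing to the sup yields $V(x+\varepsilon)\geq V(x)+\varepsilon>V(x)$. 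For the upper bound I would symmetrically pass from $\pi'\in\Pi(x+\varepsilon)$ to $\pi\in\Pi(x)$ by adding $\varepsilon$ to $\Delta I_0$: again trajectories, bankruptcy time and terminal reserve coincide for $t>0$, giving $v(x,\pi)=v(x+\varepsilon,\pi')-k\varepsilon$ and hence $V(x+\varepsilon)-V(x)\leq k\varepsilon$. The only delicate point --- and the one I see as the main (though mild) obstacle --- is justifying that the initial singular correction at $t=0$ does not alter the bankruptcy time, which relies on the freezing convention in item 7 of Section \ref{Section2}; aside from this, everything is routine.
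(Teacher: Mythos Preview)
Your proposal is correct and follows essentially the same approach as the paper's own proof: the same ``pay-everything-and-wait-for-the-first-claim'' strategy for Item~1, the same sign argument on the penalty for Item~2, and the same $\varepsilon$-perturbation of $L_0$ (resp.\ $I_0$) for the two inequalities in Item~3. The only cosmetic difference is that the paper organises the Item~1 computation as $\frac{\norm{p}_0}{q}-\frac{\lambda}{\lambda+q}\pr{a+\frac{\norm{p}_0}{q}}-\frac{\lambda b\,\mathbb{E}[C]}{\lambda+q}$ before simplifying, whereas you go directly through $\mathbb{E}\pp{\int_0^{\tau_1}e^{-qs}ds}=1/(\lambda+q)$.
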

\begin{proof}
We consider the no-injection, no-reinsurance, pay all reserve (and declare bankruptcy at first claim $\tau_1$) strategy denoted by $\pi^0$:
\begin{equation*}
\begin{split}
V(x) &\geq v (x, \pi^0) = \mathbb{E}_x \pp{\int_{0}^{\tau_1} e^{-qs} p^{u_s} (0) ds + e^{-q\tau_1} \pr{-a+bX_{\tau_1}^{\pi}}} \\
&= x\!+\! \mathbb{E} \pp{\int_{0}^{\tau_1} \! e^{-qs} \norm{p}_0 ds \!+\! e^{-q\tau_1} \pr{-a+bX_{\tau_1}^{\pi^0}} \!} \!=\\ &=x\!+\frac{\norm{p}_0}{q} \!-\! \pr{\! a \!+\! \frac{\norm{p}_0}{q} \!} \mathbb{E} \pp{e^{-q\tau_1}} + b\mathbb{E} \pp{e^{-q\tau_1} X_{\tau_1}^{\pi^0}} \\
&= x+\frac{\norm{p}_0}{q} - \frac{\lambda}{\lambda +q} \pr{a+\frac{\norm{p}_0}{q}} +b \int_{0}^{\infty} \lambda e^{-\lambda t} e^{-qt} dt \int_{\mathbb{R}_+} -y F(dy) \\
& = x+ \frac{\lambda}{\lambda +q} \pr{   \frac{\norm{p}_0 }{\lambda} -a -b \mathbb{E}[C]  }.
\end{split}
\end{equation*}
\\Item 2: Since $X_{\sigma_{0-}^{\pi}}^{\pi} \leq 0$ at the time of bankruptcy, it follows that
\begin{equation*}
\begin{split}
 v(x,\pi) &\leq \mathbb{E}_x\pp{\int_0^{\sigma_{0-}^{\pi}} e^{-qs}\pr{dL_s-kdI_s} -ae^{-q\sigma_{0-}^{\pi}}  } \\
& \leq \mathbb{E}_x \pp{\int_0^{\sigma_{0-}^{\pi}}e^{-qs}\pr{dL_s-kdI_s}   }  \leq x+ \frac{\norm{p}_0}{q} \leq x+\frac{\norm{p}_0}{[p]_1}. \\
\end{split}
\end{equation*}
\\Item 3:
\begin{itemize}
\item If $\pr{u,L,I}=:\pi\in\Pi (x)$, take $\tilde{\pi}=\pr{u,L+\varepsilon ,I}$ (i.e. pay $\varepsilon$ at time 0, from the reserve $x+\varepsilon$ and then follow the policy $\pi$) and $\tilde{\pi} \in \Pi \pr{x+\varepsilon}$. Hence, $v (x+\varepsilon ,\tilde{\pi}) = \varepsilon +v (x,\pi)$, $V(x+\varepsilon) \geq \varepsilon +v(x,\pi)$ and $V(x)+\varepsilon \leq V(x+\varepsilon)$.
\item If $\pr{u,L,I}=:\pi \in\Pi (x+\varepsilon)$, take $\tilde{\pi} =\pr{u,L,I+\varepsilon}$ (i.e. inject $\varepsilon$ at time 0, from the reserve x, then follow $\pi$) and it is easy to see that $\tilde{\pi} \in \Pi (x)$. Thus, $v(x+\varepsilon ,\pi) = v(x,\tilde{\pi}) +k\varepsilon$, $v (x+\varepsilon ,\pi) \leq V(x)+ k\varepsilon$ and finally $V(x+\varepsilon) \leq V(x)+k\varepsilon$.
\end{itemize}
\end{proof}
\begin{proposition}\label{PropEstimILXpenalty}
Let $x\geq 0$ and $\pi:=\pr{u,L,I}\in\Pi(x)$ be $1$-optimal (in the sense that the associated cost $v(x,\pi)\geq V(x)-1$). Then, for every $t\geq 0$,
\begin{enumerate}
\item the injection process satisfies $\mathbb{E}_x\pp{\displaystyle{\int_{\pp{0,t\wedge\sigma_{0-}^{\pi}}}} e^{-qs}dI_s} \leq \frac{2\norm{p}_0 +[p]_1}{(k-1) [p]_1} + \frac{\lambda \pr{a+b\mathbb{E} [C]}}{(k-1) \pr{\lambda +[p]_1}} ;$
\item the dividend process satisfies $\mathbb{E}_x\pp{\displaystyle{\int_{\pp{0,t\wedge\sigma_{0-}^{\pi}}}}e^{-qs}dL_s} \leq x+\frac{(k+1) \norm{p}_0 +[p]_1}{(k-1) [p]_1} + \frac{\lambda \pr{a+b\mathbb{E} [C]}}{(k-1) \pr{\lambda +[p]_1}} ;$
\item the associated solution satisfies \[\mathbb{E}_x\pp{\underset{t\geq 0}\sup\ e^{-q\pr{t\wedge\sigma_{0-}^\pi}}\pr{X^\pi_{t\wedge\sigma^{\pi}_{0-}}\vee 0}}\leq  x+\frac{(k+1) \norm{p}_0 +[p]_1}{(k-1) [p]_1} + \frac{\lambda \pr{a+b\mathbb{E} [C]}}{(k-1) \pr{\lambda +[p]_1}} ;\]
\item Furthermore, $\mathbb{E}_x\pp{\displaystyle{\int_{\pp{0,\sigma_{0-}^\pi}}} e^{-qt}\pr{X^\pi_{t}\vee 0}dt}\leq \frac{1}{q-\pp{p}_1} \pr{x+\frac{(k+1) \norm{p}_0 +[p]_1}{(k-1) [p]_1} + \frac{\lambda \pr{a+b\mathbb{E} [C]}}{(k-1) \pr{\lambda +[p]_1}}}.$
\end{enumerate}
\end{proposition}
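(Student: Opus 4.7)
The plan is to adapt the proof of Proposition~\ref{PropEstimILX} to the penalty setting. Decompose the cost as $v(x,\pi) = D - kJ + P$, where $D := \mathbb{E}_x[\int_{[0,\sigma_{0-}^\pi]} e^{-qs}\,dL_s]$, $J := \mathbb{E}_x[\int_{[0,\sigma_{0-}^\pi]} e^{-qs}\,dI_s]$, and $P := \mathbb{E}_x[e^{-q\sigma_{0-}^\pi}(-a + b X^\pi_{\sigma_{0-}^\pi})]$. Since bankruptcy requires $X^\pi_{\sigma_{0-}^\pi} \le 0$, one has $P \le 0$; using moreover the jump bound $X^\pi_{\sigma_{0-}^\pi} \ge -\Delta \bar N^u_{\sigma_{0-}^\pi}$ together with $\mathbb{E}_x[e^{-q\sigma_{0-}^\pi}\Delta \bar N^u_{\sigma_{0-}^\pi}] \le \mathbb{E}[C]$, one obtains the uniform upper bound $-P \le a + b\mathbb{E}[C]$. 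This is the only genuinely new ingredient relative to the non-penalty version.

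For item~1 I would mirror the two-sided pinching of Proposition~\ref{PropEstimILX}. The $1$-optimality combined with Proposition~\ref{estimatepenalty1}(1) gives $D - kJ = v(x,\pi) - P \ge V(x) - 1 \ge x + \frac{\norm{p}_0}{\lambda + q} - \frac{\lambda(a+b\mathbb{E}[C])}{\lambda + q} - 1$. Applying It\^o's formula to $s \mapsto e^{-qs} X^\pi_s$ on $[0,\sigma_{0-}^\pi]$, and exploiting $p^u(y) - qy \le \norm{p}_0$ for $q \ge \pp{p}_1,\ y \ge 0$, together with $X^\pi_{\sigma_{0-}^\pi-} \ge 0$ and the monotonicity of $\bar N^u$, yields $D - J \le x + \norm{p}_0/q$. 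Subtracting these two estimates and specialising to $q = \pp{p}_1$ produces a bound on $(k-1)J$, in which the extra term $\frac{\lambda(a+b\mathbb{E}[C])}{(k-1)(\lambda+\pp{p}_1)}$ emerges precisely from the factor $\frac{\lambda}{\lambda+\pp{p}_1}$ that weakens the $V$-lower bound in the penalty case. Item~2 then follows from $D \le kJ + (D - kJ) \le kJ + v(x,\pi) - P \le kJ + x + \norm{p}_0/\pp{p}_1 + a + b\mathbb{E}[C]$, using Proposition~\ref{estimatepenalty1}(2) and the penalty estimate, and substituting the bound on $J$ just obtained.

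For items~3 and 4 the argument proceeds as in Proposition~\ref{PropEstimILX}, since the pathwise comparison of Proposition~\ref{PropBasicPropertiesXV}(3) is insensitive to the cost. I would bound $e^{-qt}(X^\pi_t \vee 0) \le x + \norm{p}_0/q + \int_{[0,t]} e^{-qs}(dI_s - dL_s)$ pathwise, discard the non-positive $-dL$ contribution, take the supremum and expectation to get $\mathbb{E}_x[\sup_t e^{-q(t\wedge\sigma_{0-}^\pi)}(X^\pi_{t\wedge\sigma_{0-}^\pi}\vee 0)] \le x + \norm{p}_0/\pp{p}_1 + J$, and plug in item~1 to conclude item~3; item~4 then follows by Fubini, which introduces the $1/(q-\pp{p}_1)$ prefactor. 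The penalty correction $\frac{\lambda(a+b\mathbb{E}[C])}{(k-1)(\lambda+\pp{p}_1)}$ is carried through items~2--4 automatically via item~1. The main technical subtlety I expect lies in item~1, namely in securing the clean uniform bound $-P \le a + b\mathbb{E}[C]$ and in correctly tracking how the $\frac{\lambda}{\lambda+\pp{p}_1}$ factor from the weakened lower bound on $V$ propagates through the subtraction to produce the stated constant in the required algebraic form.
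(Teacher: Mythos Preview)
Your route for item~1 is correct and in fact slightly cleaner than the paper's: instead of invoking the dynamic-programming inequality at $t\wedge\sigma_{0-}^\pi$ and then bounding $V(X^\pi_{t\wedge\sigma_{0-}^\pi})$ from above, you work directly at $\sigma_{0-}^\pi$, use $P\le 0$ to get $D-kJ\ge v(x,\pi)\ge V(x)-1$, and pair this with the It\^o/comparison bound $D-J\le x+\norm{p}_0/q$. The subtraction then yields the stated constant (indeed a marginally sharper one). One phrasing quibble: you cannot ``specialise to $q=\pp{p}_1$'' since $q$ is a fixed datum; what you mean is to use $q\ge\pp{p}_1$ to replace $1/q$ by $1/\pp{p}_1$ and $\lambda/(\lambda+q)$ by $\lambda/(\lambda+\pp{p}_1)$.

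Your route for item~2, however, does not deliver the stated bound and rests on a claim you have not justified. First, the inequality $-P\le a+b\,\mathbb{E}[C]$ would require $\mathbb{E}_x\bigl[e^{-q\sigma_{0-}^\pi}\Delta\bar N^u_{\sigma_{0-}^\pi}\bigr]\le\mathbb{E}[C]$; but $\sigma_{0-}^\pi$ depends on the entire claim history, so the claim indexed by $N_{\sigma_{0-}^\pi}$ is not a priori distributed as $C_1$, and this step needs an argument you have not supplied. Second, even granting $-P\le a+b\,\mathbb{E}[C]$, feeding item~1 into $D\le kJ+x+\norm{p}_0/\pp{p}_1+(a+b\,\mathbb{E}[C])$ produces constants of order $\frac{(3k-1)\norm{p}_0}{(k-1)\pp{p}_1}$ and an extra undiscounted $(a+b\,\mathbb{E}[C])$ term, strictly worse than the assertion. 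The paper avoids both issues by using the very pathwise bound you already wrote down for item~3: from Proposition~\ref{PropBasicPropertiesXV}(3) one has, simultaneously,
\[
e^{-q(t\wedge\sigma_{0-}^\pi)}\bigl(X^\pi_{t\wedge\sigma_{0-}^\pi}\vee 0\bigr)+\int_{[0,t\wedge\sigma_{0-}^\pi]}e^{-qs}\,dL_s\ \le\ x+\frac{\norm{p}_0}{\pp{p}_1}+\int_{[0,t\wedge\sigma_{0-}^\pi]}e^{-qs}\,dI_s,
\]
and taking expectations immediately gives items~2 and~3 together as $D\le x+\norm{p}_0/\pp{p}_1+J$, after which substituting item~1 yields exactly $\frac{(k+1)\norm{p}_0+\pp{p}_1}{(k-1)\pp{p}_1}+\frac{\lambda(a+b\,\mathbb{E}[C])}{(k-1)(\lambda+\pp{p}_1)}$. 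In short: drop the $-P$ estimate entirely and reuse your item~3 inequality \emph{before} discarding the $-dL_s$ term. Items~3 and~4 are then handled exactly as you describe.
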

\begin{proof}
Under the assumptions introduced before and using the previous proposition, one has
\begin{equation*}
\begin{split}
& x+ \frac{\lambda}{\lambda +q} \pr{   \frac{\norm{p}_0 }{\lambda} -a -b \mathbb{E}[C]  } -1 \leq V(x)-1 \leq v(x,\pi) \\
\leq & \mathbb{E}_x \!\pp{\int_{\pp{0, t\wedge \sigma_{0-}^{\pi}}} e^{-qs} \pr{dL_s -kdI_s}  +e^{-q \pr{t\wedge\sigma_{0-}^{\pi}}} V\pr{X_{t\wedge\sigma_{0-}^{\pi}}^{\pi}}} \\
\leq & \mathbb{E}_x\pp{\int_{\pp{0,t\wedge\sigma_{0-}^{\pi}}} \!e^{-qs}\!\pr{dL_s \!-\!dI_s}\!+\!e^{-q\pr{t\wedge\sigma_{0-}^{x,\pi}}} \!\pr{\! \max\set{X_{t\wedge\sigma_{0-}^{\pi}}^{\pi},0}\!+\!\frac{\norm{p}_0}{[p]_1}} \!}\\&-\!(k\!-\!1)\mathbb{E}_x \!\pp{\int_{\pp{0,t\wedge\sigma_{0-}^{\pi}}} \!e^{-qs}dI_s}\\
\leq & x+ \frac{2\norm{p}_0}{[p]_1} -(k-1)\mathbb{E}_x\pp{\int_{\pp{0,t\wedge\sigma_{0-}^{\pi}}}e^{-qs}dI_s}. \\
\end{split}
\end{equation*}
It follows that
\begin{equation}
\begin{split}
\mathbb{E}_x\pp{\int_{\pp{0,t\wedge\sigma_{0-}^{\pi}}}e^{-qs}dI_s} &\leq \frac{1}{k-1} \set{\frac{2\norm{p}_0}{[p]_1} +\frac{\lambda}{\lambda +q} \pr{  a +b \mathbb{E}[C]  } +1} \\
 &\leq \frac{2\norm{p}_0 +[p]_1}{(k-1) [p]_1} + \frac{\lambda \pr{a+b\mathbb{E} [C]}}{(k-1) \pr{\lambda +[p]_1}}.\\
\end{split}
\end{equation}
By using item 3 of the last proposition and the previous estimates, we can deduce that
\begin{equation}
\begin{split}
&\mathbb{E}_x\pp{\sup_{t\geq 0}e^{-qt}\pr{X^\pi_{t\wedge\sigma^{x,\pi}_{0-}}\vee 0}}+\mathbb{E}_x\pp{\int_{\pp{0,t\wedge\sigma_{0-}^{x,\pi}}}e^{-qs}dL_s} \\
\leq & x+\frac{\norm{p}_0}{[p]_1} +\mathbb{E}_x \pp{\int_{\pp{0,t\wedge\sigma_{0-}^{\pi}}} e^{-qs} dI_s} \\
\leq & x+\frac{(k+1) \norm{p}_0 +[p]_1}{(k-1) [p]_1} + \frac{\lambda \pr{a+b\mathbb{E} [C]}}{(k-1) \pr{\lambda +[p]_1}}. \\
\end{split}
\end{equation}
Finally, by using a similar method to the one in Proposition \ref{PropEstimILX}, it follows that
\begin{equation}
\begin{split}
\mathbb{E}_x\pp{\int_{\pp{0,\sigma_{0-}^\pi}} e^{-qt}\pr{X^\pi_{t}\vee 0}dt}  &\leq  \frac{1}{q-[p]_1} \pr{x +\frac{\norm{p}_0}{[p]_1} +\mathbb{E}_x \pp{\int_{0}^{\sigma_{0-}^{\pi}} e^{-qs} dI_s }    } \\
&\leq \frac{1}{q-\pp{p}_1} \pr{x+\frac{(k+1) \norm{p}_0 +[p]_1}{(k-1) [p]_1} + \frac{\lambda \pr{a+b\mathbb{E} [C]}}{(k-1) \pr{\lambda +[p]_1}}}.
\end{split}
\end{equation}
\end{proof}

The Hamilton-Jacobi-Bellman variational inequality in this case is the same as before.
Under the additional assumption on $a,\ b$, we have the following proposition.
\begin{proposition}\label{VsupersolP}
The function $V$ is a non-negative absolutely continuous ($\mathcal{AC}$) super-solution of \eqref{HJB} such that $V(x)\leq x+\frac{\norm{p}_0}{[p]_1}$, for all $x\geq 0$, and $V(0)\geq \frac{\lambda}{\lambda +q} \pr{\frac{\norm{p}_0}{\lambda} -a -b \mathbb{E} [C]}>0$.
\end{proposition}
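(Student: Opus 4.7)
My plan is to mirror the blueprint of Proposition \ref{Vsupersol} almost verbatim, substituting Proposition \ref{estimatepenalty1} for Proposition \ref{PropBasicPropertiesXV} wherever a quantitative estimate is invoked, and using hypothesis $\pp{\mathbf{A_{a,b}}}$ to recover strict positivity at the origin. The work decomposes into four tasks: the global bounds on $V$, regularity (absolute continuity with derivative in $\pp{1,k}$), the super-solution inequalities on $\mathbb{R}_+$, and the compatibility of $V$ with \eqref{HJB} on $\mathbb{R}_-\setminus\set{0}$.

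First I would settle bounds and regularity. The upper estimate $V(x)\leq x+\frac{\norm{p}_0}{\pp{p}_1}$ on $\mathbb{R}_+$ is Proposition \ref{estimatepenalty1}-(2). Evaluating the lower bound of Proposition \ref{estimatepenalty1}-(1) at $x=0$ gives $V(0)\geq \frac{\lambda}{\lambda+q}\pr{\frac{\norm{p}_0}{\lambda}-a-b\mathbb{E}\pp{C}}$, which is strictly positive under the strict form of $\pp{\mathbf{A_{a,b}}}$; the same item applied to $x\geq 0$ then produces $V(x)\geq 0$ on $\mathbb{R}_+$. The two-sided Lipschitz-type estimate $V(x+\varepsilon)-V(x)\in\pp{\varepsilon,k\varepsilon}$ of Proposition \ref{estimatepenalty1}-(3) (its left half coming from the dividend shift $\tilde{\pi}=(u,L+\varepsilon,I)$, its right half from the injection shift $\tilde{\pi}=(u,L,I+\varepsilon)$) yields both absolute continuity of $V$ on $\mathbb{R}_+$ and $V'(x)\in\pp{1,k}$ for Lebesgue-a.e.\ $x\geq 0$.

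The super-solution identity on $\mathbb{R}_+$ then splits. The bounds $1-V'(x)\leq 0$ and $V'(x)-k\leq 0$ are exactly the differential form of the above shift estimates. The substantive inequality $H(x,V,V'(x))\leq 0$ is obtained by fixing an arbitrary reinsurance $u\in\mathcal{R}$, considering the no-dividend/no-injection strategy $\pi=(u,0,0)$, applying It\^{o}'s formula to $s\mapsto e^{-qs}V\pr{X_s^\pi}$ on $\pp{0,t\wedge\tau_1}$ (which is legitimate because $V$ is AC and jumps are handled as in \eqref{Ito}), taking expectations, invoking the dynamic programming principle in the form $V(x)\geq \mathbb{E}_x\pp{e^{-q(t\wedge\tau_1)}V\pr{X_{t\wedge\tau_1}^\pi}}$ (no running reward accrues on this interval), dividing by $t$, and letting $t\downarrow 0^+$. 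A supremum in $u\in\mathcal{R}$ then delivers $H(x,V,V'(x))\leq 0$ a.e.\ on $\mathbb{R}_+$.

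On $\mathbb{R}_-\setminus\set{0}$, the admissible actions at $x<0$ reduce to instantaneous bankruptcy (contributing $-a+bx$) or an immediate injection $I_0\geq -x$ bringing the reserve to $y=x+I_0\geq 0$ and continuing optimally (contributing $kx+V(y)$). Since $V'\leq k$ on $\pp{0,\infty}$, the map $y\mapsto V(y)-ky$ is non-increasing, so the injection branch is maximised at $y=0$, yielding $V(0)+kx$. Under $\pp{\mathbf{A_{a,b}}}$ this dominates the bankruptcy value in the regime of interest $x\geq -V(0)/k$, and the picture is completed exactly as in Proposition \ref{Vsupersol}: on $\set{V>0}$ one reads $V'=k$ and on $\set{V=0}$ the equation is trivially satisfied. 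The hard part is precisely this reconciliation on the negative axis, where the non-trivial penalty $-a+bx$ threatens to destroy both non-negativity of $V$ and its simple $\pr{V(0)+kx}^+$-form; the role of $\pp{\mathbf{A_{a,b}}}$ is to suppress the bankruptcy branch relative to the injection branch over the range in which $V(0)+kx\geq 0$.
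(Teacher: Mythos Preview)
Your proposal is correct and follows exactly the approach the paper intends: the paper omits the proof of Proposition~\ref{VsupersolP} entirely, stating only that it is ``very similar to the one without penalty,'' and your write-up is precisely that adaptation, replacing Proposition~\ref{PropBasicPropertiesXV} by Proposition~\ref{estimatepenalty1} and invoking $\pp{\mathbf{A_{a,b}}}$ for strict positivity at the origin. One small caveat: on the far negative axis $x<-V(0)/k$ the bankruptcy branch now contributes $-a+bx<0$ rather than $0$, so the set $\set{V=0}$ you appeal to need not exist and global non-negativity may fail there; this is a looseness already present in the paper's statement rather than a defect in your argument, and the super-solution inequality on $\mathbb{R}_-\setminus\set{0}$ still holds because $V\leq 0$ makes the $\min$ in \eqref{HJB} non-positive.
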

The proof is very similar to the one without penalty and will, therefore, be omitted.
\section*{Conclusions}
This paper provides the theoretical basis for dual algorithms in connection with complex random systems with mixed control interventions. The numerical implementation of these methods using classical and deep networks methods make the object of on-going research. \\

The explicit $a,b$-barrier strategies explained in \cite{avram2020equity} and \cite{AGAS2022} should provide a benchmark in the case without reinsurance. The limit case $k=\infty$ is morally reduced to the better-known framework with no-injection and provides a second benchmark (using \cite{azcue2014stochastic}). Finally, the case of proportional reinsurance will provide a further benchmark (with three explicit parameters in well-chosen cases).\\

The method is expected to be intensively used for such reinsurance problems when the candidates to optimality are either difficult to guess or the verification result is simply too fastidious.

\section{Appendix}\label{SectionA}
\subsection{Proof of Proposition \ref{PropEstimILX}}
\begin{proof}[Proof of Proposition \ref{PropEstimILX}]
Under the assumption in our statement, and owing to Proposition \ref{PropBasicPropertiesXV} (assertions 3. and 5.),
\begin{equation}
\label{EstimI}
\begin{split}
&V(x)-1\leq v(x,\pi)\\
\leq &\mathbb{E}_x\pp{\int_{\pp{0,t\wedge\sigma_{0-}^{\pi}}}e^{-qs}\pr{dL_s-kdI_s}+e^{-q\pr{t\wedge\sigma_{0-}^{\pi}}}V\pr{X_{t\wedge\sigma_{0-}^{\pi}}^{\pi}}}\\
\leq &\mathbb{E}_x\pp{\int_{\pp{0,t\wedge\sigma_{0-}^{\pi}}}e^{-qs}\pr{dL_s-dI_s}+e^{-q\pr{t\wedge\sigma_{0-}^{\pi}}}\pr{\max\set{X_{t\wedge\sigma_{0-}^{\pi}}^{\pi},0}+\frac{\norm{p}_0}{\pp{p}_1}}}\\&-(k-1)\mathbb{E}_x\pp{\int_{\pp{0,t\wedge\sigma_{0-}^{\pi}}}e^{-qs}dI_s}\\
\leq &x+\frac{2\norm{p}_0}{\pp{p}_1}-(k-1)\mathbb{E}_x\pp{\int_{\pp{0,t\wedge\sigma_{0-}^{\pi}}}e^{-qs}dI_s}.
\end{split}
\end{equation}
It follows that $\displaystyle{\mathbb{E}_x\pp{\int_{\pp{0,t\wedge\sigma_{0-}^{\pi}}}e^{-qs}dI_s}\leq \frac{2\norm{p}_0+\pp{p}_1}{(k-1)\pp{p}_1}}$. Using Proposition \ref{PropBasicPropertiesXV} assertion 3. and the previous estimate, it follows that \[\mathbb{E}_x\pp{\sup_{t\geq 0}e^{-qt}\pr{X^\pi_{t\wedge\sigma^{\pi}_{0-}}\vee 0}}+\mathbb{E}_x\pp{\int_{\pp{0,t\wedge\sigma_{0-}^{\pi}}}e^{-qs}dL_s}\leq x+\frac{(k+1)\norm{p}_0+\pp{p}_1}{(k-1)\pp{p}_1},\]thus providing the assertions 2. and 3.\\
Finally, owing to Proposition \ref{PropBasicPropertiesXV} (assertion 3. written for $\pp{p}_1$ instead of $q$), it follows that, for $0\leq t\leq \sigma_{0-}^\pi$,
\[X_t^\pi\vee0\leq xe^{\pp{p}_1t}+\norm{p}_0\frac{e^{\pp{p}_1t}-1}{\pp{p}_1}+\int_{\pp{0,t}}e^{\pp{p}_1\pr{t-s}}dI_s.\]
One multiplies this equation by $e^{-qt}$, integrates with respect to $dt$ on $\pp{0,\sigma_{0-}^\pi}$ and uses Fubini arguments to deal with the $dI_sdt$ part to infer\[\int_{\pp{0,\sigma_{0-}^\pi}}e^{-qt}\pr{X_t^\pi\vee0}dt\leq x \frac{1}{q-\pp{p}_1}+\frac{\norm{p}_0}{\pp{p}_1}\pr{ \frac{1}{q-\pp{p}_1}-\frac{1}{q}}+ \frac{1}{q-\pp{p}_1}\int_{\pp{0,\sigma_{0-}^\pi}}e^{-qs}dI_s.\]The fourth assertion is complete by taking the expectancy under $\mathbb{P}_x$ and using the first assertion.
\end{proof}

\subsection{(Sketch of the) Proof of the Super-Solution Property }\label{Sub7.1}
 Before proceeding with the elements of proof, let us give a couple of comments on the dynamic programming principle.
\begin{proposition}\label{DPP}
For every initial $x\in\mathbb{R}_+$ and every $t\geq 0$, the following equality holds true\[V(x)=\sup_{\pi\in\Pi(x)}\mathbb{E}_x\pp{\int_{\pp{0,t\wedge\sigma_{0-}^{\pi}}}e^{-qs}\pr{dL_s-kdI_s}+e^{-q\pr{t\wedge\sigma_{0-}^\pi}}V\pr{X_{t\wedge\sigma_{0-}^\pi}^{\pi}}}.\]
\end{proposition}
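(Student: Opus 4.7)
The plan is the classical two-inequality split, adapted to the mixed singular/continuous setting and the bankruptcy stopping time $\sigma_{0-}^\pi$. First I would exploit the strong Markov property of the driving Poisson noise and the i.i.d. claims to express, for any fixed $\pi=(u,L,I)\in\Pi(x)$ and any $t\geq 0$, the gain as
\begin{equation*}
v(x,\pi)=\mathbb{E}_x\!\pp{\int_{[0,t\wedge\sigma_{0-}^\pi]}\!e^{-qs}(dL_s-kdI_s)+\mathbf{1}_{\{t<\sigma_{0-}^\pi\}}e^{-qt}\,\mathbb{E}_{X_t^\pi}\!\pp{\int_{[0,\sigma_{0-}^{\pi^{(t)}}]}\!e^{-qs}(dL_s^{(t)}-kdI_s^{(t)})}},
\end{equation*}
where $\pi^{(t)}$ denotes the shifted policy obtained by restarting after $t$. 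Then the upper bound $v(x,\pi)\leq \mathbb{E}_x[\cdots+e^{-q(t\wedge\sigma_{0-}^\pi)}V(X^\pi_{t\wedge\sigma_{0-}^\pi})]$ follows on taking the supremum inside the conditional expectation, once we note that on $\{t\geq \sigma_{0-}^\pi\}$ the contribution after $\sigma_{0-}^\pi$ vanishes by the freezing convention, while $V(X^\pi_{\sigma_{0-}^\pi})=0$ because $X^\pi_{\sigma_{0-}^\pi}\leq -V(0)/k\leq 0$ and $V$ is zero on that region (cf. the description of $V$ on $\mathbb{R}_-$ given just after Proposition \ref{Vsupersol}). Taking the supremum over $\pi\in\Pi(x)$ yields the inequality $V(x)\leq \mathrm{RHS}$.

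For the reverse inequality, I would fix $\pi\in\Pi(x)$ and $\varepsilon>0$, and for each admissible terminal state $y$ pick an $\varepsilon$-optimal policy $\pi^{y,\varepsilon}\in\Pi(y)$ with $v(y,\pi^{y,\varepsilon})\geq V(y)-\varepsilon$. A measurable selection (via the Kuratowski--Ryll-Nardzewski theorem, using that $\Pi(\cdot)$ has a measurable graph and that $y\mapsto V(y)$ is continuous by Proposition \ref{PropBasicPropertiesXV}, item 6.) provides a version $y\mapsto \pi^{y,\varepsilon}$ jointly measurable in $(y,\omega)$. Then one concatenates: let $\tilde\pi$ coincide with $\pi$ on $[0,t\wedge\sigma_{0-}^\pi]$ and with the time-shifted copy of $\pi^{X_{t\wedge\sigma_{0-}^\pi}^\pi,\varepsilon}$ (driven by the independent future increments of $N$ and $(C_i)$) on $(t\wedge\sigma_{0-}^\pi,\infty)$. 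The integrability constraints in Proposition \ref{PropBasicPropertiesXV} and $q\geq [p]_1$ show $\tilde\pi\in\Pi(x)$. A direct computation gives
\begin{equation*}
V(x)\geq v(x,\tilde\pi)\geq \mathbb{E}_x\!\pp{\int_{[0,t\wedge\sigma_{0-}^\pi]}\!e^{-qs}(dL_s-kdI_s)+e^{-q(t\wedge\sigma_{0-}^\pi)}V(X^\pi_{t\wedge\sigma_{0-}^\pi})}-\varepsilon.
\end{equation*}
Letting $\varepsilon\downarrow 0$ and taking the supremum over $\pi\in\Pi(x)$ finishes the proof.

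The main obstacle is the concatenation step: one must verify that the glued process $\tilde\pi$ remains admissible in the sense of \eqref{RestrDividends}, in particular that the jump constraint on $\Delta L$ is preserved at time $t\wedge\sigma_{0-}^\pi$ (where both the pre-strategy $\pi$ and the restarted $\pi^{y,\varepsilon}$ may place mass), and that the capital-injection sub-optimality observation in Remark \ref{RemSimultaneousIL} allows one to desingularise any simultaneous $\Delta L\wedge \Delta I$ produced by the gluing without decreasing the gain. The continuity and $k$-Lipschitz character of $V$ (Proposition \ref{PropBasicPropertiesXV}, item 6.) together with the uniform estimates in Proposition \ref{PropEstimILX} ensure the measurable selection and the passage to the limit in $\varepsilon$ cause no integrability issues.
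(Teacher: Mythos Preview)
Your overall two-inequality strategy is correct and is the standard route, but it differs from the paper's proof in the ``hard'' direction $\tilde V(x)\leq V(x)$. The paper avoids measurable selection entirely: it first uses the a~priori bound on $\mathbb{E}_x[\int e^{-qs}dI_s]$ for near-optimal $\pi$ (as in Proposition~\ref{PropEstimILX}) together with Markov's inequality to confine $X^\pi_{t\wedge\sigma_{0-}^\pi}$ to a compact set $\mathbb{K}$ up to probability $\varepsilon$, then exploits the uniform (indeed $k$-Lipschitz) continuity of $V$ to cover $\mathbb{K}$ by finitely many intervals $[x^i,x^{i+1})$, picks one $\varepsilon$-optimal strategy $\pi^i\in\Pi(x^i)$ per interval, and concatenates by first paying the surplus $X_t^\pi-x^i$ as a dividend and then following $\pi^i$. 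Outside $\mathbb{K}$ it simply liquidates. This buys a completely elementary construction with no appeal to selection theorems and makes the admissibility of the glued strategy transparent, at the cost of the extra compactness/covering bookkeeping. Your measurable-selection route is cleaner to state but requires justifying that $\Pi(\cdot)$ has a suitable measurable structure in the present càdlàg/predictable setting; the paper sidesteps that issue.

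Two small points to fix. First, your claim that $X^\pi_{\sigma_{0-}^\pi}\leq -V(0)/k$ and hence $V(X^\pi_{\sigma_{0-}^\pi})=0$ is not correct: bankruptcy only forces $X^\pi_{\sigma_{0-}^\pi}<0$, and the reserve may well land in $(-V(0)/k,0)$ where $V>0$. For the inequality $V(x)\leq \tilde V(x)$ this is harmless since $V\geq 0$ already gives the needed bound. Second, and relatedly, in your concatenation for $\tilde V(x)\leq V(x)$ you let $\tilde\pi$ coincide with $\pi$ on $[0,t\wedge\sigma_{0-}^\pi]$; on $\{\sigma_{0-}^\pi\leq t\}$ this forces $\tilde\pi$ to inherit $\pi$'s (possibly sub-optimal) bankruptcy decision, so $\tilde\pi$ cannot recover the positive term $e^{-q\sigma_{0-}^\pi}V(X^\pi_{\sigma_{0-}^\pi})$ when $X^\pi_{\sigma_{0-}^\pi}\in(-V(0)/k,0)$. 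The fix is to modify $\tilde\pi$ at the junction time by injecting an extra $-X^\pi_{\sigma_{0-}^\pi}$ (so that $\sigma_{0-}^{\tilde\pi}>\sigma_{0-}^\pi$) and then following an $\varepsilon$-optimal strategy from $0$; a one-line computation using $V(y)=V(0)+ky$ on that range shows this recovers the missing term up to $\varepsilon$.
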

\begin{proof}[Elements of Proof of Proposition \ref{DPP}]
The proof is quite standard, inspired by \cite{azcue2005optimal} and the only difficulty resides in presence of possible unbounded injection $I$.

We only need to prove that $V(x)$ cannot be lower than the right-hand expression (the converse inequality being straightforward) and this right-hand term is denoted by $\tilde{V}(x)$. As usual, the arguments rely on the uniform continuity of $V$ (following from Proposition \ref{PropBasicPropertiesXV}).

Let us fix $x\in\mathbb{R}_+$ and $\varepsilon>0$. We begin with noting that $\tilde{V}(x)\geq x$. The argument requires considering  the strategy $\pi^0$ consisting in no-reinsurance, no capital injection, $L_{0}=x$ followed by distributing all premiums as dividends prior to the first claim at which bankruptcy is declared. Secondly, reasoning as we have already done in Proposition \ref{PropEstimILX}, it follows that as soon as $\pi$ is $\varepsilon$-optimal (for $\tilde{V}(x)$), one has
\[\mathbb{E}_x\pp{\int_{\pp{0,t\wedge\sigma_{0-}^{\pi}}}e^{-qs}dI_s}\leq \frac{2\norm{p}_0+\pp{p}_1}{(k-1)\pp{p}_1}.\] Using Proposition \ref{PropBasicPropertiesXV} assertion 3., it follows that \[\mathbb{E}_x\pp{X^\pi_{t\wedge\sigma^{\pi}_{0-}}\vee 0}\leq e^{qt}\pr{x+c},\]where $c>0$ is a generic constant (independent of $x$ and $\pi$) that is allowed to change from one line to another. In particular, $\mathbb{P}_x\pr{X^\pi_{t\wedge\sigma^{\pi}_{0-}}>\frac{e^{qt}\pr{x+c}}{\varepsilon}}\leq \varepsilon$ and define $\mathbb{K}:=\set{y\in\mathbb{R}_+:\ y\leq \frac{e^{qt}\pr{x+c}}{\varepsilon}}$ to get $\mathbb{P}_x\pr{\set{X^\pi_{t\wedge\sigma^{\pi}_{0-}}\vee 0  \notin \mathbb{K}}}\leq \varepsilon.$
Since $V$ is uniformly continuous, there exists a finite family $\pr{x^i}_{1\leq i\leq N}$ covering $\mathbb{K}$ such that, for every $x\in \left[x^i,x^{i+1} \right)$, one has \[\max\set{V(x)-V\pr{x^i},x-x^i}\leq \varepsilon.\]
\hspace{1.5em}For every such $x^i$, there exists a strategy $\pi^i\in\Pi\pr{x^i}$ such that $v\pr{x^i,\pi^i}\geq V\pr{x^i}-\varepsilon.$ We define $\pi^*$ by setting
\begin{enumerate}
\item $\pi^*_s=\pi_s$, on \[\set{\sigma_{0-}^{\pi}\leq t,\ s\geq 0}\cup\set{\sigma_{0-}^{\pi}>t> s};\]
\item $\pi^*_s=\pi^i_{s-t}+\pr{0,L_t+X_t^\pi-x^i,I_t}$ on $\set{\sigma_{0-}^{\pi}>t,\ X_t^\pi\in\left[x^i,x^{i+1}\right),\ s\geq t}$ (i.e. pay the surplus $X_t^\pi-x^i$ at time $t$, then follow the strategy $\pi^i$ from the (new) reserve $x^i$);
\item $\pi^*_s=\pr{0,L_t+X_t^\pi,I_t}$ on $\set{\sigma_{0-}^{\pi}>t, X_t^\pi\notin  \mathbb{K},\ s\geq t}$ (i.e. give all the surplus $X_t^\pi$ in dividends then wait for ruin).
\end{enumerate}Then, the strategy $\pi^*$ is admissible and
\begin{enumerate}
\item On $\set{\sigma_{0-}^{\pi}\leq t}$, $X_{\sigma_{0-}^{\pi}\wedge t}< 0,\ \mathbb{P}-a.s.$ and everything is contained in the running cost.
\item On $\set{\sigma_{0-}^{\pi}>t,\ X_t^\pi\in\left[x^i,x^{i+1}\right)}$, using Proposition \ref{PropBasicPropertiesXV}, assertion 6, one has \begin{equation}\label{DPP1}\footnote{With a proper shift notation, using a strong-Markov family, the left-hand term corresponds to $v\pr{X_t^\pi,\pi^*}$ on the $\mathcal{F}_{t+}$-measurable set $\set{\sigma_{0-}^{\pi}>t,\ X_t^\pi\in\left[x^i,x^{i+1}\right)}$.} X_t^\pi-x^i+v\pr{x^i,\pi^i}\geq V\pr{x^i}-\varepsilon\geq V\pr{X_t^\pi}-\pr{k+1}\varepsilon.
\end{equation}\item On the set $\set{\sigma_{0-}^{\pi}>t}$, one has $X_t^\pi\geq 0$. As a consequence, \begin{equation}
\label{DDP2}
\begin{split}
&\mathbb{E}_x\pp{\mathbf{1}_{\sigma_{0-}^{\pi}>t,\ X_t^\pi \notin \mathbb{K}}e^{-qt}V\pr{X_t^\pi}}-\mathbb{E}_x\pp{\mathbf{1}_{\sigma_{0-}^{\pi}>t,\ X_t^\pi \notin \mathbb{K}}e^{-qt}X_t^\pi}\\\leq &e^{-qt}\frac{\norm{p}_0}{\pp{p}_1}\mathbb{P}_x\pr{{\sigma_{0-}^{\pi}>t,\ X_t^\pi \notin \mathbb{K}}}\leq \frac{\norm{p}_0\varepsilon}{\pp{p}_1}.
\end{split}
\end{equation}
\end{enumerate}
Putting all these together, it follows that
\begin{equation}
\begin{split}
\tilde{V}(x)-\varepsilon\leq &\mathbb{E}_x\pp{\int_{\pp{0,t\wedge\sigma_{0-}^{\pi}}}e^{-qs}\pr{dL_s-kdI_s}+e^{-q\pr{t\wedge\sigma_{0-}^{\pi}}}V\pr{X_{t\wedge\sigma_{0-}^{\pi}}^{\pi}}}\\
&\leq v\pr{x,\pi^*} \!+\! (k+1)\varepsilon\mathbb{P}_x\pr{\sigma_{0-}^{\pi}>t,X_t^\pi\in \mathbb{K}} \!+\! \frac{\norm{p}_0\varepsilon}{\pp{p}_1} \!\leq \! V(x)+\pr{k+1+\frac{\norm{p}_0}{\pp{p}_1}}\varepsilon.
\end{split}
\end{equation}
Our proof is now complete by recalling that $\varepsilon>0$ is arbitrary.
\end{proof}
\begin{remark}
Of course, it is straight-forward to generalize this from deterministic times $t$ to an exogenous stopping time $t\wedge \tau_1$ (note that $\tau_1\leq \sigma_{0-}^\pi,\ \mathbb{P}-a.s.$) showing the behavior at $t$ or the first claim, i.e. \[\sup_{\pi\in\Pi(x)}\mathbb{E}_x\pp{\int_{\pp{0,t\wedge\tau_1}}e^{-qs}\pr{dL_s-kdI_s}+e^{-q\pr{t\wedge\tau_1}}V\pr{X_{t\wedge\tau_1}^{\pi}}}=V(x).\]
\end{remark}
We proceed with the proof of Proposition \ref{Vsupersol} on the non-negative axis.\\
\begin{proof}[(Sketch of the) Proof of Proposition \ref{Vsupersol}]
Let $u\in\mathcal{R}$ be fixed and let us consider $\pi=\pr{u,0,0}$ (no-dividend, no injection policy). For convenience, we let $\Phi_t^{x,u}$ denote the non-decreasing solution of $d\Phi_t^{x,u}=p^u\pr{\Phi_t^{x,u}}dt,\ \Phi_0^{x,u}=x$. One easily notes that $\Phi_t^{x,u}\geq x+tp^u(x),$ for all $t\geq 0$.\\
Taking into account the dynamic programming principle (see the previous remark) and the monotonicity of $V$, one has
\begin{align*}V(x)\geq &\mathbb{E}_x\pp{e^{-q\pr{t\wedge\tau_1}}V\pr{X_{t\wedge\tau_1}^{\pi}}}\\\geq&\mathbb{P}_x\pr{\tau_1>t}e^{-qt}V\pr{x+p^u(x)t}+\mathbb{E}_x\pp{\mathbf{1}_{\tau_1\leq t}e^{-q\tau_1}\int_{\mathbb{R}_+}V(x-y)F^u(dy)}.\end{align*}One recalls that $V$ is $\mathcal{AC}$, $\mathbb{P}_x\pr{\tau_1>t}=e^{-\lambda t}$ and $\mathbb{E}_x\pp{\mathbf{1}_{\tau_1\leq t}e^{-q\tau_1}}=\frac{\lambda\pr{1-e^{-(q+\lambda)t}}}{\lambda+q}$, subtracts $V(x)$ and divides the resulting equation by $t$ and finally takes the limit as $t\rightarrow 0+$ to infer \[\mathcal{L}^uV(x)=-(\lambda+q)V(x)+p^u(x)V'(x)+\lambda\int_{\mathbb{R}_+}V(x-y)F^u(dy)\leq 0,\]at every Lebesgue point of $V$, hence almost surely on $\mathbb{R}_+$.
\end{proof}
\subsection{Proof of Proposition \ref{PropSupersol}}
\begin{proof}[Proof of Proposition \ref{PropSupersol}]
Let us fix a function $\rho$ supported on $(0,1)$ that is continuously differentiable and non negative.  We construct, in a standard way,  a sequence of mollifiers by setting, for $n\geq 1$, $\rho_n(x):=n\rho\pr{nx}$, and \[\psi_n(x):=\int_{\pr{0,\frac{1}{n}}}\phi(x+s)\rho_n(s)ds,\ \forall x\in\mathbb{R}.\] This sequence $\psi_n$ consists of class $C^1$ functions. Since $\phi$ is absolutely continuous, $1\leq \phi'\leq k, a.e.$ on $\left[x_0,\infty\right)$. The same applies to $\psi'_n$. These functions comply with properties (b) and (c) and, owing to the monotonicity of $\phi$, $\displaystyle{\phi(x)\leq \psi_n(x)\leq \phi(x)+\int_{\pr{0,\frac{1}{n}}}nks\rho(ns)ds\leq \phi(x)+\frac{k}{n}}$.
Let us set, for $x\geq 0$, \[\delta_n:=\sup\set{\phi'(y): y\in\pp{x,x+\frac{1}{n}}\cap\mathcal{D}(\phi)},\]where $\mathcal{D}(\phi)$ stands for the set on which $\phi$ is differentiable. By construction, $\psi_n'(x)\leq \delta_{\frac{n}{2}}$. Owing to $\delta_m\in\pp{1,k},\ \forall m\geq 1$, there exists some point $x_n\in\pp{x,x+\frac{2}{n}}\cap\mathcal{D}(\phi)$ such that $\phi'\pr{x_n}\geq \delta_{n/2}-\frac{1}{n\bar{p}(x)}$. For every $u\in\mathcal{R}$, using the mononicity of $\phi,\psi_n$ and $p^u$ as well as the choice of $x_n\in\pp{x,x+\frac{2}{n}}$ and the super-solution condition for $\phi$ at $x_n$, we get
\begin{equation}\label{EstimLu}\
\begin{split}
&\mathcal{L}^u\psi_n(x)\\
\leq &\mathcal{L}^u\phi\pr{x_n}+p^u(x)\psi_n'(x)-p^u\pr{x_n}\phi'\pr{x_n}\\
&+\lambda\pr{\int_0^\infty\psi_n\pr{x-y}dF^u(y)-\int_0^{\infty}\phi\pr{x_n-y}dF^u(y)}+\pr{\lambda+q}\pr{\phi\pr{x_n}-\psi_n\pr{x}}\\
\leq &p^u(x)\!\pr{\psi_n'(x) \!-\! \phi'\pr{x_n}} \!+ \! \lambda\! \pr{\int_0^\infty \! \pr{\psi_n\pr{x\!-\! y} \!-\! \phi\pr{x\!-\! y}}dF^u(y) \! } \!+\!\pr{\lambda \!+\!q}\!\pr{\frac{2k}{n}+\phi\pr{x}\!-\!\psi_n(x) \!}\\
\leq &\frac{p^u(x)}{n\bar{p}(x)  }  + \lambda \frac{k}{n}+\pr{\lambda+q}\frac{2k}{n}\leq \frac{1+k\pr{3\lambda+2q}}{n}.
\end{split}
\end{equation}
\begin{enumerate}
\item By setting $\phi_n(y):=\psi_n(y)+\frac{1+(3\lambda+2q)k}{qn}, y\in\mathbb{R}$, it follows that
$\phi(y)\leq \psi_n(y)\leq \phi_n(y)\leq \phi(y)+\frac{k+\frac{1+(3\lambda+2q)k}{q}}{n}$ and $\phi_n'(y)=\psi_n'(y)$. Furthermore, using \eqref{EstimLu}, we conclude that\begin{align*}\mathcal{L}^u\phi_n(y)\leq&\mathcal{L}^u\psi_n(y)+\lambda\int_0^\infty \pr{\phi_n(y-z)-\psi_n(y-z)}dF^u(z)-(\lambda+q)\pr{\phi_n(y)-\psi_n(y)}\\\leq &\mathcal{L}^u\psi_n(y)-q\frac{1+(3\lambda+2q)k}{qn}\leq 0,\textnormal{ for }y\geq 0.\end{align*}
\item To prove the second assertion, we begin with fixing $x\geq 0$. Owing to It\^{o}'s formula applied to $\psi_n$ (cf. the first part of the proof), we get
\begin{equation}\label{AC_estim1}\begin{split}&\mathbb{E}_x\pp{e^{-q\pr{t\wedge\sigma_{0-}^{\pi}}}\psi_n\pr{X_{t\wedge\sigma_{0-}^{\pi}}^\pi}}-\psi_n(x)\\\leq
&\mathbb{E}_x\pp{\int_{\pp{0,t\wedge\sigma_{0-}^{\pi}}}\set{e^{-qs}\pr{kdI_s-dL_s}+e^{-qs}\mathcal{L}^{u_s}\psi_n\pr{X_{s-}^\pi}ds}}
\end{split}\end{equation}
When $\pi=\pr{u,L,I}$ is such that $v(x,\pi)+1\geq V(x)$, then, by Proposition \ref{PropEstimILX}, one has
\[\mathbb{E}_x\pp{\int_{\pp{0,\sigma_{0-}^{\pi}}}e^{-qs}dI_s}\leq \frac{2\norm{p}_0+\pp{p}_1}{(k-1)\pp{p}_1}.\]
We aim at applying Lebesgue's dominated convergence arguments (as the time $t$ or $n$ go to $\infty$). To be able to do this, one uses the later inequality, relying on Proposition \ref{PropBasicPropertiesXV} assertion 3., together with $0\leq \psi_n(y)\leq y+c+\frac{k}{n}$ and the inequality\[-\pr{\lambda+q}\psi_n(y)\leq\mathcal{L}^u\psi_n(y)\leq k\pr{\norm{p}_0+\pp{p}_1y}+\lambda\psi_n(y),\ \forall y\geq 0.\]The same applies to $\phi=\psi_\infty$.
Allowing $t$ to go to $\infty$ in \eqref{AC_estim1} and since $\psi_n$ is non negative, we have
\begin{align}\label{AC_estim2}\mathbb{E}_x\pp{\int_{\pp{0,\sigma_{0-}^{\pi}}}e^{-qs}\pr{-kdI_s+dL_s}}\leq \psi_n(x)+
\mathbb{E}_x\pp{\int_{\pp{0,\sigma_{0-}^{\pi}}}e^{-qs}\mathcal{L}^{u_s}\psi_n\pr{X_{s-}^\pi}ds}.
\end{align}
We conclude, by passing $n\rightarrow\infty$ in \eqref{AC_estim2}, invoking \eqref{EstimLu} and by maximizing over $\pi\in\Pi(x)$ that the inequality $V(x)\leq \phi(x)$ holds true.
\end{enumerate}
\end{proof}
\subsection{Proofs of Theorem \ref{ThmDual}, Corollary \ref{Cor} and Lemma \ref{infchange}}
We gather here these results for which the arguing is somewhat similar.\\

\begin{proof}[Proof of Theorem \ref{ThmDual}]
We fix $x\geq 0$. The inclusion of the occupation measures in $\Theta(x)$ (as presented before) guarantees that $V(x)\leq \Lambda(x)$.

Furthermore, we let $\phi\in C^1\pr{\mathbb{R};\mathbb{R}_+}$, with linear-growth, derivative $\phi'\in\pp{1,k}$ on $\mathbb{R}_+$ and satisfying $\mathcal{L}^u\phi(y)\leq 0$, for all $u\in\mathcal{R}$ and all $y\in\mathbb{R}_+$.

For $\gamma\in \Theta(x)$, the constraint condition written for the test function $\phi$, combined with the non-positiveness of $\mathcal{L}^u\phi\pr{y_2}$ for all $y_2\geq 0$, hence $\gamma_1$-a.s., and the restrictions on the derivative of $\phi$ yield \begin{align*}0\leq&\int_{\mathbb{R}_+\times\mathbb{R}} e^{-qs_1}\phi\pr{y_1}\gamma_0\pr{ds_1,dy_1}\leq\phi(x)+\int_{\mathbb{R}_+} \phi'\pr{y_2}\pr{\gamma_3-\gamma_2}\pr{\mathbb{R}_+,\mathbb{R},\mathbb{R}_+,dy_2,\mathcal{R},\mathbb{R}_+}\\&\leq \phi(x)+\pr{k\gamma_3-\gamma_2}\pr{\mathbb{R}_+,\mathbb{R},\mathbb{R}_+,\mathbb{R}_+,\mathcal{R},\mathbb{R}_+}.\end{align*}
Then, by taking infimum over such $\phi$ respectively the supremum over $\gamma\in\Theta(x)$ , one gets $\Lambda(x)\leq\Lambda^*(x)$.

To conclude, we need to prove that $\Lambda^*(x)\leq V(x)$. To achieve this, one relies on the auxiliary constructions in Proposition \ref{PropSupersol} and on the characterization of $V$ as super-solution in Proposition \ref{Vsupersol}. Indeed, for every $n\geq 1$, the functions $\phi_n$ constructed in Proposition \ref{PropSupersol} are of linear growth (cf. (a) and the properties of $V$), with derivative belonging to $\pp{1,k}$ on some interval containing $\mathbb{R}_+$ (cf. (b)) and with a non-positive generator (cf. (d)). It follows that $\Lambda^*(x)\leq\phi_n(x)\leq V(x)+\frac{\tilde{c}}{n}$. Our proof of \eqref{Dual} is complete by recalling that $\tilde{c}$ is independent of $n$ and $n$ is arbitrarily large.\\
To prove \eqref{Dualk}, one notes that $\Lambda^*$ cannot exceed the right-hand member of \eqref{Dualk} and Remark \ref{dPsi} allows to prove (as we did just before) that the right-hand member of \eqref{Dualk} does not exceed $V(x)$. The proof is now complete.
\end{proof}\\

Let us now turn to the proof of the Corollary.\\
\begin{proof}[Proof of Corollary \ref{Cor}]
We recall (see Proposition \ref{DPP}), that
\[V(x)=\sup_{\pi\in\Pi(x)}\mathbb{E}_x\pp{\int_{\pp{0,t\wedge\sigma_{0-}^{\pi}}}e^{-qs}\pr{dL_s-kdI_s}+e^{-q\pr{t\wedge\sigma_{0-}^\pi}}V\pr{X_{t\wedge\sigma_{0-}^\pi}^{\pi}}}.\]
The occupation measure $\gamma$ associated to the random time $t\wedge\sigma_{0-}^{\pi}$ and to the policy $\pi$ belongs to $\Theta_t(x)$. It follows that \begin{align*}V(x)\leq &\sup_{\gamma\in\Theta_t(x)}\set{\int \pr{d\gamma_2-kd\gamma_3}+\int e^{-qs_1}V\pr{y_1}d\gamma_0}\\= &\sup_{\gamma\in\Theta_t(x)}\set{\int \pr{d\gamma_2-kd\gamma_3}+\int e^{-qs_1}{\Lambda}^*\pr{y_1}d\gamma_0}.\end{align*}

By definition, $\Lambda^*(y_1)\leq \phi\pr{y_1}$, for all $y_1\geq 0$ and $\phi$ satisfying the restrictions in $\mathbf{F}$. Moreover, since such $\phi$ have $k$-upper bounded derivative and are non-negative, $\Lambda^*\pr{y_1}=\pr{\Lambda^*(0)+ky_1}^+\leq \phi\pr{y_1}$, for all $y_1\leq 0$. As a consequence, the second term in \eqref{TwoStage} does not exceed the third one.

It follows that, in order to conclude, one should prove that
\begin{equation}\label{EstimLinDPP}
\sup_{\gamma\in\Theta_t(x)}\set{\int \pr{d\gamma_2-kd\gamma_3}+ \inf_{\phi\in \mathbf{F}}\  \int e^{-qs_1}\phi(y_1)d\gamma_0}\leq V(x).
\end{equation}

We recall that, whenever $\phi\in C^1\pr{\mathbb{R};\mathbb{R}_+}$ s.t.  $\phi'\in\pp{1,k}\textnormal{ on }\pp{0,\infty}$ and
$\mathcal{L}^u\phi(y)\leq 0$, for all $u\in\mathcal{R}$, and all $y\in\mathbb{R}_+$, by the definition of $\Theta(x)$ ,\[\int e^{-qs_1}\phi\pr{y_1}d\gamma_0=\phi(x)+\int\mathcal{L}^u\phi\pr{y_2}d\gamma_1+\int\phi'\pr{y_2}\pr{d\gamma_3-d\gamma_2}\leq \phi(x)+\int \pr{kd\gamma_3-d\gamma_2}.\]

As a consequence, it follows that, for such functions $\phi$, \[\int \pr{d\gamma_2-kd\gamma_3}+ \int e^{-qs_1}\phi(y_1)d\gamma_0\leq \phi(x).\]The claim in \eqref{EstimLinDPP} is got, as in Theorem \ref{ThmDual}, by employing the functions constructed in the second assertion of Proposition \ref{PropSupersol} and owing to Remark \ref{dPsi}. This completes our proof.
\end{proof}\\

To end this subsection, we provide a similar proof for Lemma \ref{infchange}.\\

\begin{proof}[Proof of Lemma \ref{infchange}]
For every test function $\phi$ that satisfies the constraints, the following inequality holds true,
\begin{equation*}
\int e^{-qs_1} \Lambda^*(y_1) d\gamma_{0,z} = \int e^{-qs_1} \underset{\psi \in\mathbf{F}}{\inf}\ \psi (y_1) d\gamma_{0,z}
 \leq \int e^{-qs_1} \phi (y_1) d\gamma_{0,z}.
\end{equation*}
Thus, one can deduce $ \int e^{-qs_1} \Lambda^*(y_1) d\gamma_{0,z} \leq \underset{\phi \in\mathbf{F}}{\inf}\  \int e^{-qs_1}\phi(y_1)d\gamma_{0,z} $.\\
For the converse inequality, one relies (again) on Proposition \ref{PropSupersol} and on the equality $V=\Lambda^*$ provided in Theorem \ref{ThmDual}. The functions $\phi_n$ exhibited in Proposition \ref{PropSupersol} satisfy (cf. item 1. (a)) $\phi_n(y)\leq \Lambda^*(y)+\frac{\tilde c}{n}$ (where $\tilde c$ is a constant independent of $y\in \mathbb{R}$ and of $n\geq 1$).  Furthermore, $\phi_n\in \mathbf{F}$ (again due to Proposition \ref{PropSupersol} with the help of Remark \ref{dPsi}).  One recalls that $\gamma_{0,z}\in \mathcal{P}\pr{\mathbb{R}_+\times\mathbb{R}}$ such that \[\inf_{\phi\in\mathbf{F}}\int e^{-qs_1}\phi(y_1)d\gamma_{0,z}\leq \int e^{-qs_1}\phi_n(y_1)d\gamma_{0,z}\leq \int e^{-qs_1}\Lambda^*(y_1)d\gamma_{0,z}+\frac{\tilde c}{n}.\]One gets the desired inequality by letting $n\rightarrow\infty$.
\end{proof}
\subsection{Proof of Proposition \ref{PropNegx}}

\begin{proof}[Proof of Proposition \ref{PropNegx}]
For the first assertion, one just needs to check (\ref{TwoStage}) is valid when $x<0$.

We recall that, for $x\in\mathbb{R}_-$, $V(x)=\max\set{V(0)+kx,0}$.  On the other hand, for $x<-\frac{V(0)}{k}$, $V(x)=0$, and by the definition in Eq. \eqref{ThetaExt1}, if $\gamma\in\Theta_t(x)$,
\begin{equation*}
\int \pr{d\gamma_2-kd\gamma_3}+\int e^{-qs_1}{\Lambda}^*\pr{y_1}d\gamma_0 = {\Lambda}^*\pr{x} =\pr{\Lambda (0) +kx}^{+} =0=V(x).
\end{equation*}
 If $x\in \big[ -\frac{V(0)}{k} , 0 \big)$, then $V(x) =V(0) +kx $ and, owing to Eq.  \eqref{ThetaExt2},
\begin{equation*}
\begin{split}
& \sup_{\gamma\in\Theta_t(x)}\set{\int \pr{d\gamma_2-kd\gamma_3}+\int e^{-qs_1}{\Lambda}^*\pr{y_1}d\gamma_0 } \\
=& \sup_{\gamma'\in\Theta_t (0)}\set{\int \pr{d\gamma'_2-kd\gamma'_3}+\int e^{-qs_1}{\Lambda}^*\pr{y_1}d\gamma'_0 } -k\delta_{-x}^{\#}\pr{\mathbb{R}_+} \\
=& V(0) -k\int_0^{-x}dy_1=V(0)+kx.
\end{split}
\end{equation*}

Let us turn to the dual formulation and fix $x<0$.  If $\phi$ is a regular test function satisfying the assumptions in the right-hand member, $\phi(x)\geq \pr{\phi(0)+kx}^+$. It follows that the right-hand member is grater or equal to $\Lambda(x)$.
\begin{enumerate}
\item For $x\in\left[-\frac{V(0)}{k},0\right)$,  we assume, by contradiction, that the inequality is strict. In particular, there exists $\varepsilon>0$ such that, for every $\phi$ satisfying the assumptions in the right-hand member, $\phi(x)\geq V(x)+2\varepsilon$.  We use $\phi_n$ constructed in Proposition \ref{PropSupersol} (for $V$) and  Remark \ref{dPsi} (second assertion).  For $n$ large enough,  $\phi_n'(y)=k$ on $\pp{x,\frac{-2}{n}}$ such that $V(0)+\frac{\tilde{c}}{n}\geq \phi_n(0)\geq \phi_n\pr{-\frac{2}{n}}=\phi_n\pr{x}+k\pr{-x-\frac{2}{n}}\geq V(0)-\frac{2k}{n}+2\varepsilon.$ A contradiction appears when taking $n\rightarrow\infty$.
\item For $x<-\frac{V(0)}{k}$,  it suffices to note that $0\leq \Lambda(x)\leq \phi_n(x)\leq \frac{\tilde{c}}{n}$ (again due to Proposition \ref{PropSupersol}).
\end{enumerate}
\end{proof}

\subsection{The moment of matrices}\label{Sub7.2}
\hspace{1.5em}Let us give some further details on the moment matrices $M_r (\mathbf{m})$ and $M_{r-\lceil \frac{\text{deg} \theta_j}{2} \rceil} \pr{\theta_j \mathbf{m}^y}$ mentioned in Section 5.3.1. We recall that the notation $\lceil x \rceil$ denotes the largest integer not exceeding $x$. In this section, we use $\mathbf{v}^*$ to denote the transpose of vector $\mathbf{v}$.

Given the basis $\pr{1, t, y, t^2 , ty , y^2, t^3,  \cdots, t^r, \cdots, y^r }$ of polynomial $h (t,y)\in \mathbb{R}_r \pp{t,y}$ and $\mathbf{h}$ denotes a vector of the corresponding coefficients $\pr{h_{00} ,h_{10} ,h_{01}, \cdots, h_{0r}}$ of $h(t,y)$. Then, the moment matrix $M_r \pr{\mathbf{m}}$ is defined as follows:
\begin{equation*}
\begin{split}
M_r \pr{\mathbf{m}} &:= \int_{\pp{0,T}\times \mathbf{Y}} \pr{1, t, y, t^2 , ty , y^2, t^3,  \cdots, t^r, \cdots, y^r }^* \pr{1, t, y, t^2 , ty , y^2, t^3,  \cdots, t^r, \cdots, y^r } d\mu  \\
&=\begin{pmatrix}
\mathbf{m}_{00} & \mathbf{m}_{10} &\mathbf{m}_{01}&\cdots & \mathbf{m}_{0r} \\ \mathbf{m}_{10} &  \mathbf{m}_{20} &\mathbf{m}_{11}&\cdots& \mathbf{m}_{1r}\\
  \vdots              &                 \vdots &      \vdots         &  \ddots &   \vdots \\
\mathbf{m}_{0r} &\mathbf{m}_{1r} &\mathbf{m}_{0(r+1)}&\cdots & \mathbf{m}_{0(2r)} \\
\end{pmatrix}, \\
\end{split}
\end{equation*}
and one gets $\displaystyle{L_{\mathbf{m}} \pr{h^2} =\mathbf{h}^T \int_{\pp{0,T}\times \mathbf{Y}} \pr{1, t, y,   \cdots, y^r }^* \pr{1, t, y,\cdots, y^r } d\mu  \ \mathbf{h} = \scal{\mathbf{h} ,M_r \pr{\mathbf{m}} \mathbf{h }} }$.

The polynomial $h (y)$ of degree at most $r-\lceil \frac{\text{deg}\theta_j}{2} \rceil$ has basis $\pr{1, y,\cdots,y^{r-\lceil \frac{\text{deg}\theta_j}{2} \rceil}}$ and the corresponding vector of coefficients $\pr{h_{0} ,h_{1}, \cdots ,h_{r-\lceil \frac{\text{deg}\theta_j}{2} \rceil}}$ is denoted by $\mathbf{h}$. By abuse of notation, we let $\theta_j\mathbf{m}^y$ denote the moments $\bar{\mathbf{m}}^y$ associated with the measure $\bar\mu(dz):=\theta_j(z)\mu^y(dz)$. This leads to $\displaystyle{\theta_j\mathbf{m}^y_i:=\int_{\mathbf{Y}} z^i\theta_j(z)\mu^y(dz)}$. With this notation, we define, as before,\[M_{r-\lceil\frac{\deg\ \theta_j}{2}\rceil}\pr{\theta_j\mathbf{m}^y}:=\int_{\mathbf{Y}} \pr{1,z,z^2,...,z^{r-\lceil\frac{\deg\ \theta_j}{2}\rceil}}^*\pr{1,z,z^2,...,z^{r-\lceil\frac{\deg\ \theta_j}{2}\rceil}}\theta_j(z)\mu(dz).\]
As a consequence, one has $L_{\mathbf{m}^y} \pr{\theta_j h^2 (y)} = \scal{\mathbf{h} ,M_{r-\lceil \frac{\text{deg} \theta_j }{2}\rceil } \pr{ \theta_j\mathbf{m}^y} \mathbf{h }} $.

\bibliographystyle{abbrv}
\bibliography{reference}

\end{document}